\pgfmathsetmacro\weight{1/2}
\pgfmathsetmacro\third{1/3}
\pgfmathsetmacro\twothirds{2/3}
\tikzset{degil/.style={
            decoration={markings,
            mark= at position 0.5 with {
                  \node[transform shape] (tempnode) {$/$};
                  }
              },
              postaction={decorate}
}
}
\newtheorem{Satz}{Theorem} 
\newtheorem{Lemm}{Lemma} 
\newtheorem{Def}{Definition} 
\newtheorem{remark}{Remark}
\newtheorem  {proposition}[Satz] {Proposition}
\newtheorem  {examp}     {Example}
\newtheorem{Ass}{Assumption}
\newcommand \N   {\mathbb{N}}
\newcommand \R   {\mathbb{R}}
\newcommand \K   {\mathcal{K}}
\newcommand \Kinf{\mathcal{K_\infty}}
\newcommand \KL  {\mathcal{KL}}
\newcommand \LL  {\mathcal{L}}
\newcommand \Uc   {\mathcal{U}}
\newcommand \Sc   {\mathcal{S}}
\newcommand \srs   {\ \ \Rightarrow\ \ }
\newcommand \Iff   {\Leftrightarrow}
\newcommand{\lel}{\left\langle}
\newcommand{\rir}{\right\rangle}
\newcommand \eps {\varepsilon}
\newcommand{\midset}{\;:\;}
\newcounter{syscounter}
\newenvironment{sysnum}{\begin{list}{($\Sigma{\arabic{syscounter}}$)}%
{\settowidth{\labelwidth}{($\Sigma4$)}
\settowidth{\leftmargin}{($\Sigma4$)~}%
\usecounter{syscounter}}}
{\end{list}}
\begin{document}

\title{
Characterizations of input-to-state stability \\for infinite-dimensional systems
}
\author{Andrii Mironchenko and Fabian Wirth
\thanks{
This research has been supported by the DFG under grant "Input-to-state stability
and stabilization of distributed parameter systems" (Wi1458/13-1).
This paper is a substantially revised and expanded version of conference
papers presented at the 22nd International Symposium on Mathematical
Theory of Networks and Systems (MTNS 2016) \cite{MiW16a} and the 56th IEEE
Conference on Decision and Control (CDC2017) \cite{MiW17d}.
}
\thanks{A. Mironchenko is with 
Faculty of Computer Science and Mathematics, University of Passau,
94030 Passau, Germany.
Email: andrii.mironchenko@uni-passau.de. Corresponding author.
}
\thanks{F. Wirth is with 
Faculty of Computer Science and Mathematics, University of Passau,
94030 Passau, Germany.
Email: fabian.lastname@uni-passau.de.
}
}

\maketitle
\begin{abstract}
We prove characterizations of input-to-state stability (ISS) for a large class of
infinite-dimensional control systems, including 
some classes of evolution equations over Banach spaces, time-delay systems, ordinary differential equations (ODE),
switched systems. These characterizations generalize well-known criteria
of ISS, proved by Sontag and Wang for ODE systems. For the special case of
differential equations in Banach spaces we prove even broader criteria for
ISS and apply these results to show that (under some mild restrictions)
the existence of a non-coercive ISS Lyapunov functions implies ISS.
We introduce the new notion of strong ISS which is equivalent to ISS in
the ODE case, but which is strictly weaker than ISS in the
infinite-dimensional setting and prove several criteria for the sISS property.
At the same time, we show by means of counterexamples, that many
characterizations, which are valid in the ODE case, are not true for general infinite-dimensional systems.
\end{abstract}


\begin{IEEEkeywords}
input-to-state stability, nonlinear systems, infinite-dimensional systems.
\end{IEEEkeywords}

\section{Introduction}

For ordinary differential equations, the concept of input-to-state
stability (ISS) was introduced in \cite{Son89}. The corresponding theory is now well developed and has a firm theoretical basis. Several powerful tools for the investigation of ISS are available and a
multitude of applications have been developed in nonlinear control theory, in particular, to robust stabilization of nonlinear systems \cite{FrK08}, design of nonlinear observers \cite{ArK01}, analysis of large-scale networks \cite{JTP94, DRW07, DRW10}, etc.

The success of ISS theory of ordinary differential equations and the need of proper tools for robust stability analysis of partial differential equations motivated the development of ISS theory in infinite-dimensional setting \cite{DaM13, MiI16, MiI15b, MaP11, JLR08, KaK16b, Mir16, JNP16}.

Characterizations of ISS in terms of other stability properties
\cite{SoW95, SoW96} are among the central theoretical results in ISS theory
of finite-dimensional systems. In \cite{SoW95} Sontag and Wang have shown
that ISS is equivalent to the existence of a smooth ISS Lyapunov function and in
\cite{SoW96} the same authors proved a so-called ISS superposition
theorem, saying that ISS is equivalent to the combination of an asymptotic gain (AG) property of the system with inputs together with global/local stability (GS/LS), and even local stability of the undisturbed system (0-LS):
\begin{equation}
  \mathrm{AG} \wedge \mathrm{GS} \;\Leftrightarrow\; \mathrm{AG} \wedge \mathrm{LS} \;\Leftrightarrow\; \mathrm{AG} \wedge \text{0-LS} \;\Leftrightarrow\; \mathrm{ISS},
\label{eq:Equivalences_ODEs}   
\end{equation}
 see \cite{SoW95,SoW96}.

These theorems greatly simplify the proofs of other fundamental results, such as small-gain theorems \cite{DRW07},
and are useful for analysis of other classes of systems, such as
time-delay systems in the Lyapunov-Razumikhin framework \cite{Tee98},
\cite{DKM12} as well as hybrid systems \cite{DaK13} to name a few examples.

The significance of these characterizations of ISS makes it strongly desirable to extend the results to infinite-dimensional systems.
In the recent paper \cite{Mir16} it was shown that uniform
asymptotic stability at zero, local ISS and the existence of a LISS Lyapunov
function are equivalent properties for a system of the form
\begin{equation}
\label{InfiniteDim_Intro}
\dot{x}(t)=Ax(t)+f(x(t),u(t)), \quad x(t) \in X,\ u(t) \in U,
\end{equation}
provided the right hand side has some sort of uniform continuity with
respect to $u$. Here $X$ is a Banach space, $U$ is a linear normed space,
$A$ is the generator of a $C_0$-semigroup $\{T(t),\; t \geq 0 \}$ and $f:X
\times U \to X$ is sufficiently regular. 
It was also demonstrated by means of a counterexample, that without this 
additional uniformity this characterization does not hold.


In addition, in \cite{Mir16} a system of the form \eqref{InfiniteDim_Intro} was
constructed, which is locally ISS (LISS), uniformly globally
asymptotically stable for a zero input (0-UGAS), globally stable (GS) and
which has an asymptotic gain (AG), but which is not ISS, which strikingly contrasts with the ODE case, see \eqref{eq:Equivalences_ODEs}.

 This naturally leads to a set of challenging questions: 
which combinations of properties considered in \cite{SoW96} are equivalent to ISS for infinite-dimensional systems? 
Is it possible to generalize all characterizations of ISS from \cite{SoW96} to the general infinite-dimensional setting, and under which conditions? Can one classify the properties, which are not equivalent to ISS in a natural way? 
Is it possible to introduce a reasonable ISS-like property which will be equivalent to ISS
in finite dimensions, but weaker than ISS for general systems
\eqref{InfiniteDim_Intro}?

In this paper, we are going to answer these questions and obtain a broad picture of relationships between stability properties 
for a large class of infinite-dimensional control systems, encompassing ODEs, differential equations in Banach spaces, time-delay systems, switched systems, etc.

In view of the examples in \cite{Mir16}, we know that a "naive"
generalization of the equivalences \eqref{eq:Equivalences_ODEs} is not
possible. These preliminary studies reveal a lack of uniformity with
respect to the state in the definition of AG and other properties. 
In finite dimensions, uniform and non-uniform notions are frequently equivalent due to local compactness of
the state space. In infinite dimensions, this uniformity becomes a requirement.  A
further difficulty we encounter in infinite-dimensional systems is that,
in contrast to the ODE case, forward completeness or global
asymptotic stability do not guarantee the
boundedness of reachability sets on finite time intervals. This is shown
in the sequel by means of a counterexample.

In order to overcome these difficulties, we introduce several novel
stability notions, which naturally extend the concepts of limit property
and of asymptotic gain.
 Namely: the uniform limit property (ULIM), the
strong limit property (sLIM) as well as the strong asymptotic gain property
(sAG). 

We say that a system has the uniform limit property (ULIM) if there exists a continuous, positive definite and increasing function
    $\gamma$ so that for any $\eps>0$ and for every $r>0$ there exists a $\tau = \tau(\eps,r)$ such that 
for all $x$ with $\|x\|_X \leq r$ and all $u\in\Uc$ there is a $t\leq
\tau$ such that 
\begin{eqnarray*}
\|\phi(t,x,u)\|_X \leq \eps + \gamma(\|u\|_{\Uc}).
\end{eqnarray*}
The ULIM property with zero gain, i.e. $\gamma \equiv 0$, is also called uniform weak attractivity \cite{MiW17a}.

\textit{It turns out that ULIM is the key to obtaining generalizations of
the characterizations of ISS, see
Theorems~\ref{thm:MainResult_Characterization_ISS} and \ref{thm:UAG_equals_ULIM_plus_LS}.} 
For a class of evolution equations with Lipschitz continuous nonlinearities we obtain in Theorem~\ref{thm:MainResult_Characterization_ISS_EQ_Banach_Spaces} additional characterizations in terms of ULIM together
with local stability of the undisturbed system.
In turn, with the help of Theorem~\ref{thm:MainResult_Characterization_ISS_EQ_Banach_Spaces} and recent results on non-coercive Lyapunov functions \cite{MiW17a} we 
show in Theorem~\ref{thm:ncISS_LF_sufficient_condition} that (under certain restrictions) \textit{existence of a non-coercive ISS Lyapunov function implies ISS}.

Using the notions of sLIM and sAG
we can characterize what we call strong ISS (sISS). For linear systems
without inputs, this concept reduces to strong stability of the semigroup
$T$, whereas ISS for linear systems without inputs corresponds to
exponential stability of $T$. In order to characterize strong ISS we
introduce the strong asymptotic gain (sAG) property, which is weaker than the
uniform asymptotic gain (UAG) property, and prove that \textit{strong ISS is equivalent to global stability together with sAG}, see Theorem~\ref{wISS_equals_sAG_GS}.

In the finite-dimensional case, we show in Proposition~\ref{prop:ULIM_equals_LIM_in_finite_dimensions}
 that the sLIM and ULIM properties
are equivalent to the usual limit property introduced in
\cite{SoW96}. This proof relies in an essential manner on on tools already
developed in \cite{SoW96}.
On the other hand, ULIM is strictly stronger than sLIM or LIM already for
linear infinite-dimensional systems. In particular, we recover all
characterizations of ISS for ODEs from \cite{SoW96} as a special case of our
results.

As argued above, for \eqref{InfiniteDim_Intro} ISS is no longer equivalent
to combinations of notions which are not fully uniform - like AG $\wedge$
GS or AG $\wedge$ 0-UGAS. By means of counterexamples, we show that these
combinations are no longer equivalent to each other. Instead, they can
be classified into several groups, according to the type and grade of
uniformity.

The manuscript is structured as follows.
In Section~\ref{sec:Prelim} we introduce the main concepts which will be used throughout the paper.
In Section~\ref{sec:Motivation_MainResult} we motivate the topic of the
paper in more precise terms, state the \textit{main results of the paper (Theorems~\ref{thm:MainResult_Characterization_ISS},~\ref{thm:UAG_equals_ULIM_plus_LS})} and
explain the way it is proved. In the same section we apply our main results to show that for a broad class of evolution equations the existence of a non-coercive ISS Lyapunov function implies ISS.
The subsequent sections contain the proof of the main result. First, in Section~\ref{ISS_equals_UAG} we prove characterizations of ISS for general infinite-dimensional systems in terms of uniform limit and uniform attraction properties.
In Section~\ref{sec:WeakISS} a concept of strong ISS is introduced and characterized in terms of strong limit and strong asymptotic gain properties.
In Section~\ref{sec:Counterexamples} we construct four counterexamples,
which clarify the interrelations between the different stability notions as
well as some of the difficulties and obstacles arising in infinite-dimensional ISS theory.

The results in this paper are complementary to recently submitted papers on Lyapunov characterizations of ISS \cite{MiW17c} and on characterizations of UGAS for infinite-dimensional systems with disturbances by means of non-coercive Lyapunov functions \cite{MiW17a}. 
Although the results in this paper are almost independent from those in \cite{MiW17a,MiW17c} (apart from \cite[Lemma 2.12]{MiW17a}), we subsume the main results of \cite{MiW17a,MiW17c,Mir16} into Theorem~\ref{thm:MainResult_Characterization_ISS}
and Proposition~\ref{Main_Prop_Undisturbed_Systems} from this paper in order to give a reader a broader perspective on characterizations of ISS.

%
%

\section{Preliminaries}
\label{sec:Prelim}

We define the concept of (time-invariant) system in the following way:

\begin{Def}
\label{Steurungssystem}
Consider the triple $\Sigma=(X,\Uc,\phi)$ consisting of 
\begin{enumerate}[(i)]  
    \item A normed linear space $(X,\|\cdot\|_X)$, called the {state space}, endowed with the norm $\|\cdot\|_X$.
    \item A set of input values $U$, which is a nonempty subset of a certain normed linear space.
    \item A {space of inputs} $\Uc \subset \{f:\R_+ \to U\}$          
endowed with a norm $\|\cdot\|_{\Uc}$ which
          satisfies the following two axioms:
                    
\textit{The axiom of shift invariance} states that for all $u \in \Uc$ and all $\tau\geq0$ the time
shift $u(\cdot + \tau)\in\Uc$ with \mbox{$\|u\|_\Uc \geq \|u(\cdot + \tau)\|_\Uc$}.

\textit{The axiom of concatenation} is defined by the requirement that for all $u_1,u_2 \in \Uc$ and for all $t>0$ the concatenation of $u_1$ and $u_2$ at time $t$
\begin{equation}
u(\tau) := 
\begin{cases}
u_1(\tau), & \text{ if } \tau \in [0,t], \\ 
u_2(\tau-t),  & \text{ otherwise},
\end{cases}
\label{eq:Composed_Input}
\end{equation}
belongs to $\Uc$.

    \item A map $\phi:\R_+ \times X \times \Uc \to X$ (called transition map), defined over a certain subset of $\R_+ \times X \times \Uc$. 
\end{enumerate}
The triple $\Sigma$ is called a (forward complete) dynamical system, if the following properties hold:

\begin{sysnum}
    \item\label{axiom:Identity} The identity property: for every $(x,u) \in X \times \Uc$
          it holds that $\phi(0, x,u)=x$.
    \item Causality: for every $(t,x,u) \in \R_+ \times X \times
          \Uc$, for every $\tilde{u} \in \Uc$, such that $u(s) =
          \tilde{u}(s)$ for all $s \in [0,t]$ it holds that $\phi(t,x,u) = \phi(t,x,\tilde{u})$.
    \item \label{axiom:Continuity} Continuity: for each $(x,u) \in X \times \Uc$ the map $t \mapsto \phi(t,x,u)$ is continuous.
        \item \label{axiom:Cocycle} The cocycle property: for all $t,h \geq 0$, for all
                  $x \in X$, $u \in \Uc$ we have
$\phi(h,\phi(t,x,u),u(t+\cdot))=\phi(t+h,x,u)$, whenever the left or the
right hand side of this equality is defined.
\end{sysnum}

We say that a control system is forward complete, if in addition to the above axioms it holds that
\begin{itemize}
   \item[(FC)]\label{axiom:FC} Forward completeness: for every $(x,u) \in X \times \Uc$ and
          for all $t \geq 0$ the value $\phi(t,x,u) \in X$ is well-defined.
\end{itemize}

\end{Def}
This class of systems encompasses control systems generated by ordinary
differential equations (ODEs), switched systems, time-delay systems,
evolution partial differential equations (PDEs), abstract differential
equations in Banach spaces and many others. 
From now on, we consider only forward-complete control systems.

\begin{remark}
Note however, that not all important systems are covered by our definitions. In particular, 
the input space $C(\R_+,U)$ of continuous $U$-valued functions does not satisfy the axiom of concatenation.
This, however, should not be a big restriction, since already piecewise continuous and $L_p$ inputs, which are used in control theory much more frequently than continuous ones, satisfy the axiom of concatenation.

Some authors consider more general concepts, in which the systems fail to satisfy the cocycle property, see e.g. \cite{KaJ11b}.
\end{remark}


We single out two particular cases which will be of interest.

ISS of the following class of semi-linear infinite-dimensional systems has been
studied in \cite{Mir16}. Let $A$ be the generator of a strongly continuous
semigroup $T$ of bounded linear operators on $X$ and let $f:X\times U \to X$. Consider the system
\begin{equation}
\label{InfiniteDim}
\dot{x}(t)=Ax(t)+f(x(t),u(t)), \quad u(t) \in U,
\end{equation}
where $x(0)\in X$.

We study mild solutions of \eqref{InfiniteDim}, i.e. solutions $x:[0,\tau] \to X$ of the integral equation
\begin{align}
\label{InfiniteDim_Integral_Form}
x(t)=T(t)x(0) + \int_0^t T(t-s)f(x(s),u(s))ds,
\end{align}
belonging to the space of continuous functions $C([0,\tau],X)$ for some $\tau>0$.

In the sequel we assume that the state space $X$ is a Banach space, the set of input values $U$ is a normed
linear space and that the input functions belong to the space
$\Uc:=PC(\R_+,U)$ of globally bounded, piecewise continuous functions $u:\R_+ \to U$, which
are right continuous. The norm of $u \in \Uc$ is given by
$\|u\|_{\Uc}:=\sup_{t \geq 0}\|u(t)\|_U$. 

\begin{remark}
Note that there are interesting infinite-dimensional systems, which are not covered by the class of systems \eqref{InfiniteDim}.
In particular, boundary control systems can be described by control systems with unbounded input operators \cite{JaZ12},
which is not covered by \eqref{InfiniteDim}. Some highly nonlinear systems
(even without inputs) as e.g. the porous medium equation \cite{Vaz07}, the
nonlinear KdV equation \cite{BaK00} or nonlinear Fokker-Planck equations \cite{Fra05} are not covered by \eqref{InfiniteDim}, and should be modeled using methods of nonlinear semigroup theory \cite{Bar10}.
\end{remark}

\textbf{Notation:} We use the following notation. The nonnegative reals are $\R_+:=[0,\infty)$. The open ball of
radius $r$ around $0$ in $X$ is denoted by $B_r:=\{x \in X: \|x\|_X <
r\}$. Similarly, \mbox{$B_{r,\Uc}:=\{u \in \Uc: \|u\|_\Uc < r\}$}. By $\mathop{\overline{\lim}}$ we denote the superior limit.
For any normed linear space $\cal L$, for any $S \subset \cal L$ we denote
the closure
$\overline{S}:=\{f\in {\cal L}: \exists \{f_k\}\subset S \mbox{ s.t. } \|f_k-f\|_{\cal L}\to 0,\ k\to\infty\}$.

For the formulation of stability properties the following classes of comparison functions are useful:
\begin{equation*}
\begin{array}{ll}
{\K} &:= \left\{\gamma:\R_+\rightarrow\R_+\left|\ \gamma\mbox{ is continuous, strictly} \right. \right. \\
&\phantom{aaaaaaaaaaaaaaaaaaa}\left. \mbox{ increasing and } \gamma(0)=0 \right\}, \\
{\K_{\infty}}&:=\left\{\gamma\in\K\left|\ \gamma\mbox{ is unbounded}\right.\right\},\\
{\LL}&:=\left\{\gamma:\R_+\rightarrow\R_+\left|\ \gamma\mbox{ is continuous and strictly}\right.\right.\\
&\phantom{aaaaaaaaaaaaaaaa} \text{decreasing with } \lim\limits_{t\rightarrow\infty}\gamma(t)=0\},\\
{\KL} &:= \left\{\beta:\R_+\times\R_+\rightarrow\R_+\left|\ \beta \mbox{ is continuous,}\right.\right.\\
&\phantom{aaaaaa}\left.\beta(\cdot,t)\in{\K},\ \beta(r,\cdot)\in {\LL},\ \forall t\geq 0,\ \forall r >0\right\}. \\
\end{array}
\end{equation*}

For system \eqref{InfiniteDim}, we use the following assumption concerning the nonlinearity $f$.
\begin{Ass}
\label{Assumption1} We assume that:
\begin{enumerate}[(i)]  
    \item $f:X \times U \to X$ is Lipschitz continuous on bounded
subsets of $X$, uniformly with respect to the second argument, i.e.  for
all $C>0$, there exists a $L_f(C)>0$, such that for all $ x,y \in B_C $ and for all $v \in U$, it holds that
\begin{eqnarray}
\|f(x,v)-f(y,v)\|_X \leq L_f(C) \|x-y\|_X.
\label{eq:Lipschitz}
\end{eqnarray}
    \item $f(x,\cdot)$ is continuous for all $x \in X$
    and $f(0,0)=0$.
\end{enumerate}
\end{Ass}

Since $\Uc=PC(\R_+,U)$, Assumption~\ref{Assumption1} ensures that mild
solutions of initial value problems of \eqref{InfiniteDim} exist and are
unique, according to
\cite[Proposition 4.3.3]{CaH98}. For system \eqref{InfiniteDim}
forward completeness is a further assumption. The conditions
($\Sigma$\ref{axiom:Identity})-($\Sigma$\ref{axiom:Cocycle}) are satisfied
by construction.

The second case of interest are finite-dimensional systems. Let $X=\R^n$,
$U=\R^m$ and $\Uc:=L_{\infty}(\R_+,U)$ (the space of globally
essentially bounded $U$-valued functions endowed with the essential supremum
norm). For $f: X\times U \to X$ consider the system
\begin{eqnarray}
\dot{x} = f(x,u),
\label{eq:ODE_System}
\end{eqnarray}
and define by $\phi(t,y,v)$ the solution of \eqref{eq:ODE_System} at time $t$ subject to initial condition
$x(0):=y$ and $u:=v$.
We assume that $f$ is continuous and locally Lipschitz continuous in $x$
uniformly in $u$. With this assumption and the additional assumption of
forward completeness classical Carath{\'e}odory theory implies  
($\Sigma$\ref{axiom:Identity})-($\Sigma$\ref{axiom:Cocycle}). We will
sometimes briefly speak of ODE systems, when referring to \eqref{eq:ODE_System}.

We start with some basic definitions. 
Without loss of generality we
restrict our analysis to fixed points of the form $(0,0) \in X \times
\Uc$, so that we tacitly assume that the zero input is an element of $\Uc$. 
\begin{Def}
\label{Assumption2}
Consider a system $\Sigma=(X,\Uc,\phi)$.
We call $0 \in X$ an equilibrium point (of the undisturbed system)  if
$\phi(t,0,0) = 0$ for all $t \geq 0$.
\end{Def}


For describing the behavior of solutions near the equilibrium the following notion is of importance
\begin{Def}
\label{def:RobustEquilibrium_Undisturbed}
Consider a system $\Sigma=(X,\Uc,\phi)$ with equilibrium point $0\in X$.
We say that 
$\phi$ is continuous at the equilibrium if   
%
for every $\eps >0$ and for any $h>0$ there exists a $\delta =
          \delta (\eps,h)>0$, so that 
\begin{eqnarray}
 t\in[0,h],\ \|x\|_X \leq \delta,\ \|u\|_{\Uc} \leq \delta \; \Rightarrow \;  \|\phi(t,x,u)\|_X \leq \eps.
\label{eq:RobEqPoint}
\end{eqnarray}
In this case we will also say that $\Sigma$ has the CEP property.
\end{Def}

Even nonuniformly globally asymptotically stable systems do not always have uniform bounds for their reachability sets on finite
intervals (see Example~\ref{0-GAS_but_not_GS}). Systems exhibiting such bounds
deserve a special name.
\begin{Def}
\label{Assumption3}
We say that $\Sigma=(X,\Uc,\phi)$ has bounded reachability sets (BRS), if for any $C>0$ and any $\tau>0$ it holds that 
\[
\sup\big\{
\|\phi(t,x,u)\|_X \midset \|x\|_X\leq C,\ \|u\|_{\Uc} \leq C,\ t \in [0,\tau]\big\} < \infty.
\]
\end{Def}

%
%

We continue with the list of stability notions, which will be used in the sequel. 
Several of these were already
  introduced in \cite{SoW96} while others appear here for the first time as they only become relevant in the infinite-dimensional case. In the finite-dimensional case, these new notions coincide with the classic ones. We discuss this issue in Section~\ref{sec:ODE_LIM_ULIM}.

\subsection{Stability notions for undisturbed systems}

We start with systems without inputs.

\begin{Def}
\label{Stab_Notions_Undisturbed_Systems}
System $\Sigma=(X,\Uc,\phi)$ is called
\begin{itemize}
\item[(i)] {\it uniformly stable at zero (0-ULS)}, if there exists a \mbox{$\sigma \in \Kinf$} and $r>0$ so that 
 \begin{equation}
 \label{eq:1}
\|\phi(t,x,0)\|_X \leq \sigma(\|x\|_X) \quad \forall x \in \overline{B_r}\ \forall t \geq 0.              
 \end{equation}

    \item[(ii)] {\it uniformly globally stable at zero (0-UGS)}, if there exists a $
          \sigma \in \Kinf$ so that 
          \begin{equation}
              \label{eq:3}
\|\phi(t,x,0)\|_X \leq \sigma(\|x\|_X) \quad\forall x \in X\ \forall t \geq 0.              
          \end{equation}
%
    \item[(iii)] {\it globally attractive at zero (0-GATT)}, if 
\begin{equation}
\label{GATT_LIM} 
\lim_{t \to \infty} \left\| \phi(t,x,0) \right\|_X = 0\quad \forall x\in X.
\end{equation}
    
    \item[(iv)] a system with the {\it limit property at zero (0-LIM)}, if 
\[
\inf_{t \geq 0} \|\phi(t,x,0)\|_X = 0\quad \forall x\in X.
\]
    
    \item[(v)] {\it uniformly globally attractive at zero (0-UGATT)}, if
          for all $\eps, \delta >0$ there is a
          $\tau=\tau(\eps,\delta) < \infty$ 
such that 
\begin{equation}
\label{UnifGATT}
 t \geq \tau,\ x \in \overline{B_{\delta}} \quad \Rightarrow \quad \|\phi(t,x,0)\|_X \leq \eps.
\end{equation}

  \item[(vi)] {\it globally asymptotically stable at zero} (0-GAS), if $\Sigma$ is 0-ULS and 0-GATT. 

    \item[(vii)] {\it asymptotically stable at zero uniformly with
            respect to the state} (0-UAS), if there exists a $ \beta \in \KL$
          and $r>0$, such that 
\begin{equation}
\label{UniStabAbschaetzung}
\left\| \phi(t,x,0) \right\|_{X} \leq  \beta(\left\| x \right\|_{X},t)\quad \forall x \in \overline{B_r}\  \forall t\geq 0.
\end{equation}

\item[(viii)] {\it globally asymptotically stable at zero uniformly with
            respect to the state} (0-UGAS), if it is 0-UAS and
          \eqref{UniStabAbschaetzung} holds for all $x \in X$.
\end{itemize}
\end{Def}

We stress the difference between the uniform notions 0-UGATT and 0-UGAS and the nonuniform notions 0-GATT and 0-GAS.
For 0-GATT systems, all trajectories converge to the origin,
but their speed of convergence may differ drastically for initial values with the same norm, in contrast to 0-UGATT systems. 
The notions of 0-ULS and 0-UGS are uniform in the sense that there exists an upper bound of the norm of trajectories which is equal for initial states with the same norm.

\begin{remark}
\label{0-GAS_strong_stability}
For ODE systems 0-GAS is equivalent to 0-UGAS, but it is weaker than
0-UGAS in the infinite-dimensional case. For linear systems $\dot{x} =
Ax$, where $A$ generates a strongly continuous semigroup, the
Banach-Steinhaus theorem implies that 0-GAS is equivalent to strong stability of  the associated semigroup $T$ and implies the 0-UGS property.
\end{remark}

For systems that are 0-LIM, trajectories approach the origin arbitrarily closely. Obviously, 0-GATT implies 0-LIM.

\subsection{Stability notions for systems with inputs}

We now consider systems $\Sigma=(X,\Uc,\phi)$ with inputs.
\begin{Def}
System $\Sigma=(X,\Uc,\phi)$ is called
\begin{itemize}
    \item[(i)] {\it uniformly locally stable (ULS)}, if there exist $ \sigma \in\Kinf$, $\gamma
          \in \Kinf \cup \{0\}$ and $r>0$ such that for all $ x \in \overline{B_r}$ and all $ u
          \in \overline{B_{r,\Uc}}$:
\begin{equation}
\label{GSAbschaetzung}
\left\| \phi(t,x,u) \right\|_X \leq \sigma(\|x\|_X) + \gamma(\|u\|_{\Uc}) \quad \forall t \geq 0.
\end{equation}

  \item[(ii)] {\it uniformly globally stable (UGS)}, if there exist $ \sigma \in\Kinf$, $\gamma
          \in \Kinf \cup \{0\}$ such that for all $ x \in X, u
          \in \Uc$ the estimate \eqref{GSAbschaetzung} holds.
    
    \item[(iii)] {\it uniformly globally bounded (UGB)}, if there exist $ \sigma \in\Kinf$, $\gamma
          \in \Kinf \cup \{0\}$ and $c>0$ such that for all $ x \in X$,
          and all $ u \in \Uc$  it holds that
\begin{equation}
\label{pGSAbschaetzung}
\left\| \phi(t,x,u) \right\|_X \leq \sigma(\|x\|_X) +\gamma(\|u\|_{\Uc}) + c \quad \forall t \geq 0.
\end{equation}
\end{itemize}

\end{Def}

%
    %
%

\begin{remark}
Trivially, UGB is equivalent to the boundedness property (BND), as defined in \cite[p. 1285]{SoW96}.
Also, UGB implies BRS, but the converse fails in general.
\end{remark}

\subsection{Attractivity properties for systems with inputs}

We define the attractivity properties for systems with inputs.
\begin{Def}
System $\Sigma=(X,\Uc,\phi)$ has the
\begin{itemize}
    \item[(i)] {\it asymptotic gain (AG) property}, if there is a $
          \gamma \in \Kinf  \cup \{0\}$ such that for all $\eps >0$, for
          all $x \in X$ and for all $u \in \Uc$ there exists a
          $\tau=\tau(\eps,x,u) < \infty$ such that 
\begin{equation}
\label{AG_Absch}
t \geq \tau\ \quad \Rightarrow \quad \|\phi(t,x,u)\|_X \leq \eps + \gamma(\|u\|_{\Uc}).
\end{equation}

  \item[(ii)] {\it strong asymptotic gain (sAG) property}, if there is a $
    \gamma \in \Kinf \cup \{0\}$ such that for all $x \in X$ and for all
    $\eps >0$ there exists a $\tau=\tau(\eps,x) < \infty $ such that 
for all $u \in \Uc$
\begin{equation}
\label{sAG_Absch}
t \geq \tau \quad \Rightarrow \quad \|\phi(t,x,u)\|_X \leq \eps + \gamma(\|u\|_{\Uc}).
\end{equation}
    
    \item[(iii)] {\it uniform asymptotic gain (UAG) property}, if there
          exists a
          $ \gamma \in \Kinf \cup \{0\}$ such that for all $ \eps, r
          >0$ there is a $ \tau=\tau(\eps,r) < \infty$ such
          that for all $u \in \Uc$ and all $x \in B_{r}$
\begin{equation}    
\label{UAG_Absch}
t \geq \tau \quad \Rightarrow \quad \|\phi(t,x,u)\|_X \leq \eps + \gamma(\|u\|_{\Uc}).
\end{equation}

\end{itemize}

\end{Def}

All three properties AG, sAG and UAG imply that all trajectories converge to the ball of radius $\gamma(\|u\|_{\Uc})$ around the origin as $t \to \infty$. 
The difference between AG, sAG, and UAG is in the kind of dependence of
$\tau$ on the states and inputs.
In UAG systems this time depends (besides $\eps$) only on the norm of the state, in sAG systems, it depends on the state $x$ (and may vary for different states with the same norm), but it does not depend on $u$. In AG systems $\tau$ depends both on $x$ and on $u$.
For systems without inputs, the AG and sAG properties are reduced to
0-GATT and the UAG property becomes 0-UGATT.

Next we define properties, similar to AG, sAG and UAG, which formalize
reachability of the $\eps$-neighborhood of the ball $B_{\gamma(\|u\|_{\Uc})}$ by trajectories of $\Sigma$.
\begin{Def}
We say that $\Sigma=(X,\Uc,\phi)$ has the
\begin{itemize}
    \item[(i)] \textit{limit property (LIM)} if there exists
          $\gamma\in\K\cup\{0\}$ such that 
for all $x\in X$, $u \in \Uc$ and $\eps>0$ there is a $t=t(x,u,\eps)$:
\[
\|\phi(t,x,u)\|_X \leq \eps + \gamma(\|u\|_{\Uc}).
\]
  \item[(ii)] \textit{strong limit property (sLIM)}, if there exists $\gamma\in\K\cup\{0\}$ so that for every $\eps>0$ 
and for every $x\in X$ there exists $\tau = \tau(\eps,x)$ such that 
for all $u\in\Uc$ there is a $t\leq \tau$:
\begin{eqnarray}
\|\phi(t,x,u)\|_X \leq \eps + \gamma(\|u\|_{\Uc}).
\label{eq:sLIM_ISS_section}
\end{eqnarray}
  \item[(iii)] \textit{uniform limit property (ULIM)}, if there exists
    $\gamma\in\K\cup\{0\}$ so that for every $\eps>0$ and for every $r>0$ there
    exists a $\tau = \tau(\eps,r)$ such that 
for all $x$ with $\|x\|_X \leq r$ and all $u\in\Uc$ there is a $t\leq
\tau$ such that 
\begin{eqnarray}
\|\phi(t,x,u)\|_X \leq \eps + \gamma(\|u\|_{\Uc}).
\label{eq:ULIM_ISS_section}
\end{eqnarray}
\end{itemize}
\end{Def}


\begin{remark}
\label{rem:LIMAG}
It is easy to see that AG is equivalent to the existence of a
$\gamma\in\Kinf$ for which
\[
\mathop{\overline{\lim}}_{t\to\infty}\|\phi(t,x,u)\|_X\leq\gamma(\|u\|_{\Uc})
\]
and LIM is equivalent to existence of a $\gamma\in\Kinf$ so that
\[
\inf_{t\geq0}\|\phi(t,x,u)\|_X\leq\gamma(\|u\|_{\Uc}),
\]
where in both cases the conditions hold for all $x\in X,u\in {\cal U}$. In
particular, AG implies LIM and on the other hand it is easy to see that
LIM and UGS together imply AG.
\end{remark}

\begin{remark}
For systems without inputs the notions of sLIM and LIM coincide and are
strictly weaker than the ULIM, even for linear infinite-dimensional systems generated by $C_0$-semigroups, see \cite{MiW17a}.
\end{remark}

\subsection{Input-to-state stability}
\label{subsec:ISS}

Now we proceed to the main concept of this paper:
\begin{Def}
\label{Def:ISS}
System $\Sigma=(X,\Uc,\phi)$ is called {\it (uniformly)  input-to-state stable
(ISS)}, if there exist $\beta \in \KL$ and $\gamma \in \K$ 
such that for all $ x \in X$, $ u\in \Uc$ and $ t\geq 0$ it holds that
\begin {equation}
\label{iss_sum}
\| \phi(t,x,u) \|_{X} \leq \beta(\| x \|_{X},t) + \gamma( \|u\|_{\Uc}).
\end{equation}
\end{Def}
The local counterpart of the ISS property is
\begin{Def}
\label{Def:LISS}
System $\Sigma=(X,\Uc,\phi)$ is called {\it (uniformly) locally input-to-state stable
(LISS)}, if there exist $\beta \in \KL$, $\gamma \in \K$ 
and $r>0$ such that the inequality \eqref{iss_sum} holds for all $ x \in
\overline{B_r}$, $u\in \overline{B_{r,\Uc}}$ and $ t\geq 0$.
\end{Def}

Lyapunov functions are a powerful tool for the investigation of ISS and
local ISS. For the class of semilinear systems \eqref{InfiniteDim} they are defined as follows. Let $x \in X$ and $V$ be a
real-valued function defined in a neighborhood of $x$. The Dini derivative 
of $V$ at $x$ corresponding to the input $u$ along the trajectories of $\Sigma$ is defined by
\begin{equation}
\label{ISS_LyapAbleitung}
\dot{V}_u(x)=\mathop{\overline{\lim}} \limits_{t \rightarrow +0} {\frac{1}{t}\big(V(\phi(t,x,u))-V(x)\big) }.
\end{equation}
\begin{Def}
\label{def:LISS}
Let $D\subset X$ be open with $0 \in D$.
A continuous function $V:D \to \R_+$ is called a \textit{LISS Lyapunov
  function} for a system $\Sigma = (X,\phi,\Uc)$,  if there exist $r >0$,
$\psi_1,\psi_2 \in \Kinf$, $\alpha \in \Kinf$ and $\sigma \in \K$ 
such that $\overline{B_r} \subset D$,
\begin{equation}
\label{LyapFunk_1Eig_LISS}
\psi_1(\|x\|_X) \leq V(x) \leq \psi_2(\|x\|_X), \quad \forall x \in \overline{B_r}
\end{equation}
and the Dini derivative of $V$ along the trajectories of $\Sigma$ satisfies
\begin{equation}
\label{DissipationIneq}
\dot{V}_u(x) \leq -\alpha(\|x\|_X) + \sigma(\|u\|_{\Uc})
\end{equation}
for all $x \in \overline{B_r}$ and $u\in \overline{B_{r,\Uc}} $.

\begin{enumerate}[(i)]  
    \item A function $V:X\to\R_+$ is called an ISS Lyapunov function, if 
\eqref{DissipationIneq} holds for all $x\in X, u \in \Uc$.
    \item $V:D\to\R_+$ is called a ($0$-UAS) Lyapunov function, if \eqref{LyapFunk_1Eig_LISS} is satisfied and if \eqref{DissipationIneq} holds for $u\equiv 0$.
\end{enumerate}
\end{Def}

\begin{remark}
     We point out that on the right-hand side of the dissipation inequality
    \eqref{DissipationIneq} the growth bound is given in terms of
    $\|u\|_{\Uc}$ instead of the more familiar $\|u\|_U$ for $u\in U$. 
    For some input spaces this is a necessity, but for
    the input space of bounded piecewise continuous functions, as well as for
    $L^\infty(\R_+,U)$ it is equivalent to require the condition
    \begin{equation*}
        \dot{V}_u(x) \leq -\alpha(\|x\|_X) + \sigma(\|u(0)\|_U)
    \end{equation*}
    for all $x\in X, u\in \Uc$.
    This may be shown similarly to the proof for "implication form"
    Lyapunov functions provided in, \cite[Proposition~5]{DaM13}.
\end{remark}

\section{Motivation and main result}
\label{sec:Motivation_MainResult}

The primary motivation for this manuscript is the following fundamental result due to Sontag and Wang \cite{SoW95}, \cite{SoW96}, which we informally described in the introduction.
\begin{proposition}
\label{Characterizations_ODEs}
For a forward complete, finite dimensional system \eqref{eq:ODE_System},
the equivalences depicted in Figure~\ref{EqEigenschaften_ODE} hold.
\end{proposition}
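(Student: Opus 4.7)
The plan is to derive Proposition~\ref{Characterizations_ODEs} essentially as a corollary of the infinite-dimensional characterizations (Theorems~\ref{thm:MainResult_Characterization_ISS} and~\ref{thm:UAG_equals_ULIM_plus_LS}) once one has shown that in $\R^n$ the refined uniformity notions introduced in this paper collapse to their classical counterparts from \cite{SoW96}. So the work splits into two ingredients: first, verify that the general framework of Definition~\ref{Steurungssystem} and the hypotheses of the main theorems apply to \eqref{eq:ODE_System}; second, prove that LIM, sLIM and ULIM (and analogously AG, sAG, UAG) coincide in finite dimensions. With these two ingredients, every equivalence in the figure follows by reading the infinite-dimensional characterizations back into the ODE setting.

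First I would verify the structural hypotheses. Forward completeness is assumed outright; the standing Carath\'eodory regularity gives ($\Sigma$\ref{axiom:Identity})--($\Sigma$\ref{axiom:Cocycle}) and the continuity of $\phi$ at the equilibrium (CEP) via continuous dependence on initial data. Crucially, BRS is \emph{automatic} for \eqref{eq:ODE_System}: any solution with $\|x\|\le C$ and $\|u\|_\Uc\le C$ lives for $t\in[0,\tau]$ inside a bounded set by Gronwall combined with forward completeness (or, alternatively, by a compactness/contradiction argument using the local Lipschitz bound). Thus the structural side conditions needed by Theorems~\ref{thm:MainResult_Characterization_ISS} and~\ref{thm:UAG_equals_ULIM_plus_LS} are satisfied free of charge.

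Next I would prove the collapse of uniformity notions, which is really the finite-dimensional content of the statement and is already noted as Proposition~\ref{prop:ULIM_equals_LIM_in_finite_dimensions}. The easy direction ULIM $\Rightarrow$ sLIM $\Rightarrow$ LIM is immediate from the definitions. For LIM $\Rightarrow$ ULIM in $\R^n$, the argument follows the original Sontag--Wang technique: fix $r,\eps>0$ and suppose for contradiction that no uniform $\tau(\eps,r)$ exists. Then one extracts sequences $x_k\in\overline{B_r}$, $u_k\in\Uc$ and times $T_k\to\infty$ such that $\|\phi(t,x_k,u_k)\|>\eps+\gamma(\|u_k\|_\Uc)$ for all $t\le T_k$. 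By Heine--Borel $x_k\to x^*$ along a subsequence; using continuous dependence on the initial condition on any fixed finite horizon, together with the LIM property applied to $x^*$ with the tail input $u_k$ (and shift-invariance of $\Uc$), one produces a single time at which $\phi(t,x^*,\cdot)$ enters the $\eps$-tube, contradicting the assumption for large $k$. The identical compactness argument yields AG $\Rightarrow$ sAG $\Rightarrow$ UAG, using that the gain $\gamma$ can be chosen uniformly by the LIM $\Leftrightarrow$ AG + UGS remark (Remark~\ref{rem:LIMAG}).

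Finally I would assemble the equivalences: the infinite-dimensional theorems state ISS $\Leftrightarrow$ UAG + CEP $\Leftrightarrow$ ULIM + (appropriate stability) and so forth; substituting UAG = AG, ULIM = LIM, and noting that for ODEs 0-UAS $\Leftrightarrow$ 0-GAS and that UGS and GS coincide under BRS, each of the combinations AG $\wedge$ GS, AG $\wedge$ LS, AG $\wedge$ 0-LS reduces to the hypotheses of the main theorems and therefore to ISS. The main obstacle I anticipate is the compactness step in LIM $\Rightarrow$ ULIM: the inputs $u_k$ are not in a compact set, so one must be careful to pass to the limit only in the state variable while keeping the input free, exploiting shift-invariance to realign the "good" entry time coming from the limit point $x^*$. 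Everything else is essentially bookkeeping against the already-established infinite-dimensional theorems.
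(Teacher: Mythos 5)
Your overall strategy (derive the ODE result as a corollary of the infinite-dimensional characterizations plus a collapse of the uniformity notions in $\R^n$) is exactly the route the paper itself endorses in Section~\ref{sec:ODE_LIM_ULIM}; note, however, that the paper does not prove Proposition~\ref{Characterizations_ODEs} at all but cites it from Sontag--Wang, and only later shows it is \emph{contained} in the main theorems via Proposition~\ref{prop:ULIM_equals_LIM_in_finite_dimensions}. Measured against that route, your proposal has two genuine gaps. First, the claim that ``the identical compactness argument yields AG $\Rightarrow$ sAG $\Rightarrow$ UAG'' is false: the paper explicitly observes that sAG does \emph{not} imply UAG even for systems of the form \eqref{eq:ODE_System} (take an equilibrium that is attractive but not stable, so that the system is 0-GATT, hence sAG with zero gain, but not 0-UGATT), and whether AG implies sAG is stated as open. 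The reason AG behaves differently from LIM is that AG requires the trajectory to \emph{remain} in the $\eps$-tube for all $t\geq\tau$, and uniformity of that over $\overline{B_r}$ genuinely needs a stability hypothesis. This error is repairable, because the cycle of equivalences in Figure~\ref{EqEigenschaften_ODE} can be closed using only LIM $=$ ULIM (each combination AG\,$\wedge$\,(stability) implies LIM\,$\wedge$\,(stability), which then feeds into Theorem~\ref{thm:UAG_equals_ULIM_plus_LS} or, for the 0-ULS variant, Theorem~\ref{thm:MainResult_Characterization_ISS_EQ_Banach_Spaces}(v)), but as written your assembly step relies on a false intermediate claim.

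Second, and more seriously, the one step that carries the finite-dimensional content --- LIM $\Rightarrow$ ULIM --- is not actually established. Your contradiction argument extracts $x_k\to x^*$ and then applies LIM to $x^*$ with the inputs $u_k$; but the entry time so obtained is $t(x^*,u_k,\eps)$, which depends on $u_k$ and may be unbounded in $k$, so no single horizon contradicts the assumed sequences. You correctly identify this obstacle in your last paragraph, but identifying it is not resolving it: the inputs range over a non-compact set and no amount of shift-invariance bookkeeping fixes this by itself. The paper's Proposition~\ref{prop:ULIM_equals_LIM_in_finite_dimensions} resolves it by invoking \cite[Corollary~III.3]{SoW96} as a black box (uniformity of the entry time over compact sets of initial states when the target level is fixed in terms of an a priori input bound $R$) and then iterating over shells $R_1>R_2>\cdots$ of input norms to convert the $R$-dependent bound $\eps+\gamma(R_k)$ into the required $\eps+\gamma(\|u\|_\infty)$; the procedure terminates after roughly $2r/\eps$ steps. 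Without either reproving that corollary or citing it, your argument does not go through. A smaller point: BRS for forward complete ODE systems is indeed automatic, but not ``by Gronwall'' (the vector field is only locally Lipschitz); it is the nontrivial boundedness-of-reachable-sets result of Lin--Sontag--Wang, which should be cited rather than asserted.
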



\begin{figure}[tbh]
\centering
\begin{tikzpicture}[>=implies,thick]
\node (ISS) at (2,5) {ISS};
\node (UAG) at (-1,5) {UAG };
\node (AG_UGAS) at (-1,4) {AG\,$\wedge$\,0-UGAS};
\node (AG_LISS) at (-1,3) {AG\,$\wedge$\,LISS};
\node (AG_LS) at (-1,2) {AG\,$\wedge$\,ULS};
\node (AG_GS) at (5,4) {AG\,$\wedge$\,UGS};
\node (LIM_GS) at (5,3) {LIM\,$\wedge$\,UGS};
\node (LIM_LS) at (5,2) {LIM\,$\wedge$\,ULS };
\node (LIM_0LS) at (2,2) {LIM\,$\wedge$\,0-ULS };
\node (ISSLF) at (5,5) {$\exists$ ISS-LF};

\draw[thick,double equal sign distance,<-] (LIM_LS) to (LIM_0LS);

\draw[thick,double equal sign distance,->] (UAG) to (AG_UGAS);
\draw[thick,double equal sign distance,->] (AG_UGAS) to (AG_LISS);
\draw[thick,double equal sign distance,->] (AG_LISS) to (AG_LS);
\draw[thick,double equal sign distance,->] (AG_LS) to (LIM_0LS);
\draw[thick,double equal sign distance,->] (ISS) to (UAG);
\draw[thick,double equal sign distance,->] (LIM_GS) to (AG_GS);
\draw[thick,double equal sign distance,->] (LIM_LS) to (LIM_GS);
\draw[thick,double equal sign distance,->] (LIM_0LS) to (LIM_LS);

\draw[thick,double equal sign distance,->] (AG_GS) to (ISSLF);

\draw[thick,double equal sign distance,->] (ISSLF) to (ISS);

\end{tikzpicture}

\caption{Characterizations of ISS in finite dimensions}
\label{EqEigenschaften_ODE}
\end{figure}
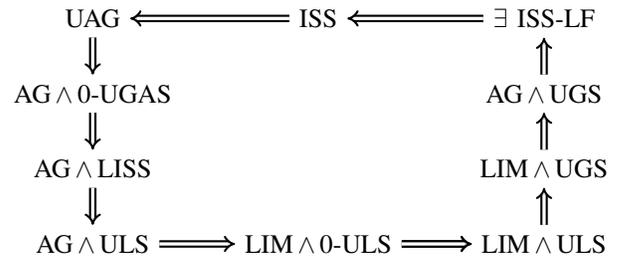

In particular, ISS is not only equivalent to the uniform properties (UAG, the existence of a smooth ISS Lyapunov function), but also to the combination
of the limit property with local stability of the system.

In \cite{Mir16} characterizations of LISS for nonlinear
infinite-dimensional systems of the form \eqref{InfiniteDim} have been studied and the following result \cite[Theorem 4]{Mir16} has been obtained:
\begin{Satz}
\label{Characterization_LISS}
Let Assumption~\ref{Assumption1} hold and assume furthermore
\begin{enumerate}
    \item[$\diamondsuit$] \label{ass:Condition_LISS} there exist $\sigma \in
\K$ and $r >0$ so that for all $v \in B_{r,U}$
and all $x \in \overline{B_r}$ we have
\begin{equation}
\|f(x,v)-f(x,0)\|_X \leq \sigma(\|v\|_U).
\label{eq:Estimate_f_concerning_u}
\end{equation}
\end{enumerate}
Then the following statements are equivalent:
\begin{enumerate}[(i)]
    \item \eqref{InfiniteDim} is 0-UAS.
    \item \eqref{InfiniteDim} has a Lipschitz continuous  0-UAS Lyapunov function.
    \item \eqref{InfiniteDim} has a Lipschitz continuous LISS Lyapunov function.
    \item \eqref{InfiniteDim} is LISS.
\end{enumerate}
\end{Satz}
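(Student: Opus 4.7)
My plan is to prove the cyclic chain of implications (iv)\,$\Rightarrow$\,(i)\,$\Rightarrow$\,(ii)\,$\Rightarrow$\,(iii)\,$\Rightarrow$\,(iv). The implication (iv)\,$\Rightarrow$\,(i) is immediate: setting $u\equiv 0$ in the LISS estimate \eqref{iss_sum} (localized to $\overline{B_r}$) yields $\|\phi(t,x,0)\|_X \leq \beta(\|x\|_X,t)$ on a neighborhood of $0$, which is precisely 0-UAS. The implication (iii)\,$\Rightarrow$\,(iv) is also a routine comparison argument: from \eqref{DissipationIneq} one shows that, for sufficiently small $\|x\|_X$ and $\|u\|_{\Uc}$, once $\|\phi(t,x,u)\|_X$ exceeds $\alpha^{-1}(2\sigma(\|u\|_{\Uc}))$ the Lyapunov function strictly decreases, and by a standard argument $V(\phi(t,x,u))$ is bounded by $\max\{\beta_V(V(x),t),\chi(\|u\|_{\Uc})\}$ for some $\beta_V\in\KL$, $\chi\in\K$; composing with $\psi_1,\psi_2$ from \eqref{LyapFunk_1Eig_LISS} and splitting the max into a sum recovers \eqref{iss_sum} locally.

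The key structural step is (ii)\,$\Rightarrow$\,(iii), and this is where assumption $\diamondsuit$ enters. Starting from a Lipschitz 0-UAS Lyapunov function $V$ satisfying $\dot V_0(x)\leq -\alpha(\|x\|_X)$, I would estimate, for small $t>0$, $x\in\overline{B_r}$ and $u\in\overline{B_{r,\Uc}}$,
\[
V(\phi(t,x,u)) - V(x) = \bigl[V(\phi(t,x,u))-V(\phi(t,x,0))\bigr] + \bigl[V(\phi(t,x,0))-V(x)\bigr].
\]
The second bracket contributes at most $-\alpha(\|x\|_X)t + o(t)$. For the first, the Lipschitz constant $L$ of $V$ on a neighborhood of $0$ gives $|V(\phi(t,x,u))-V(\phi(t,x,0))|\leq L\|\phi(t,x,u)-\phi(t,x,0)\|_X$. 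The mild-solution formula \eqref{InfiniteDim_Integral_Form} yields
\[
\phi(t,x,u)-\phi(t,x,0)=\int_0^t T(t-s)\bigl[f(\phi(s,x,u),u(s))-f(\phi(s,x,0),0)\bigr]\,ds,
\]
which after adding and subtracting $f(\phi(s,x,u),0)$ is bounded, via the Lipschitz property of $f$ from Assumption~\ref{Assumption1}(i), assumption $\diamondsuit$, and $\|T(t)\|\leq Me^{\omega t}$, by $C(t)\cdot\sigma(\|u\|_{\Uc})$ after a Gr\"onwall argument, with $C(t)=O(t)$ as $t\to 0^+$. Dividing by $t$ and taking the $\mathop{\overline{\lim}}$ at $t=0^+$ in \eqref{ISS_LyapAbleitung} delivers $\dot V_u(x)\leq -\alpha(\|x\|_X)+LM\sigma(\|u\|_{\Uc})$, i.e., the LISS dissipation inequality \eqref{DissipationIneq} on a smaller ball; $V$ itself is unchanged, so \eqref{LyapFunk_1Eig_LISS} is inherited.

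The hardest step, and the one I expect to be the main obstacle, is the converse Lyapunov implication (i)\,$\Rightarrow$\,(ii): producing a Lipschitz-on-bounded-sets 0-UAS Lyapunov function from the mere 0-UAS estimate. I would attempt a Yoshizawa/Massera-style construction such as
\[
V(x):=\sup_{t\geq 0}\,g(\|\phi(t,x,0)\|_X)\,\mu(t),
\]
with $g\in\Kinf$ increasing slowly and $\mu\in\LL$ decaying slowly, chosen so that the $\KL$-bound $\beta(\|x\|_X,t)$ forces the supremum to be attained at some finite $t^*(x)$ and yields a $\Kinf$-sandwich \eqref{LyapFunk_1Eig_LISS}. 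The dissipation inequality then follows from the semigroup/cocycle property. The delicate point is Lipschitz continuity of $V$ on bounded sets: it reduces to Lipschitz dependence of $\phi(t,\cdot,0)$ on the initial state uniformly in $t\geq 0$ on a neighborhood of $0$. This uniform Lipschitz bound is not automatic from a bare $\KL$-estimate — one has to combine the 0-UAS decay on $[T,\infty)$ (for $T$ large enough that $\beta(r,T)$ is small) with the Gr\"onwall-type Lipschitz estimate coming from Assumption~\ref{Assumption1}(i) on the compact interval $[0,T]$, and then smooth the supremum by convolution or an inf-convolution with a Lipschitz kernel if needed. Once $V$ is Lipschitz on a neighborhood of $0$, the dissipation $\dot V_0(x)\leq-\alpha(\|x\|_X)$ on that neighborhood closes the proof of (ii).
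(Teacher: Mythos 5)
The paper offers no proof of this theorem to compare against: it is quoted verbatim from \cite[Theorem 4]{Mir16} and used as a black box (the text only remarks that it is reminiscent of the robustness result \cite[Corollary 4.2.3]{Hen81}). Judged on its own terms, your cycle (iv)\,$\Rightarrow$\,(i)\,$\Rightarrow$\,(ii)\,$\Rightarrow$\,(iii)\,$\Rightarrow$\,(iv) is the natural architecture and matches the cited source. The two easy implications are fine, and your treatment of (ii)\,$\Rightarrow$\,(iii) --- keep the same $V$, split $V(\phi(t,x,u))-V(x)$ through $\phi(t,x,0)$, and bound $\|\phi(t,x,u)-\phi(t,x,0)\|_X$ by $O(t)\,\sigma(\|u\|_{\Uc})$ using the mild-solution formula, Assumption~\ref{Assumption1}(i), assumption $\diamondsuit$ and Gr\"onwall --- is exactly the robustness mechanism that makes $\diamondsuit$ indispensable, and it correctly localizes where that hypothesis is used.

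There is, however, one concrete defect in your sketch of (i)\,$\Rightarrow$\,(ii): the weight in the Yoshizawa construction points the wrong way. With $\mu\in\LL$ decreasing, the cocycle property gives
\begin{equation*}
V(\phi(h,x,0))=\sup_{s\ge h} g\big(\|\phi(s,x,0)\|_X\big)\,\mu(s-h)\;\ge\;\sup_{s\ge h} g\big(\|\phi(s,x,0)\|_X\big)\,\mu(s),
\end{equation*}
since $\mu(s-h)\ge\mu(s)$; the time shift can only push the supremum \emph{up}, so no decay of $V$ along trajectories, and hence no inequality of the form \eqref{DissipationIneq}, can be extracted. The standard repair is to use an \emph{increasing} weight after normalizing the $\KL$-bound via Sontag's lemma: choose $\alpha_1,\alpha_2\in\Kinf$ with $\alpha_1(\beta(r,t))\le\alpha_2(r)e^{-2t}$ and set $V(x):=\sup_{t\ge0}\alpha_1(\|\phi(t,x,0)\|_X)e^{t}$ on $\overline{B_r}$. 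Then $\alpha_1(\|x\|_X)\le V(x)\le\alpha_2(\|x\|_X)$, and $V(\phi(h,x,0))=e^{-h}\sup_{s\ge h}\alpha_1(\|\phi(s,x,0)\|_X)e^{s}\le e^{-h}V(x)$, which yields $\dot V_0(x)\le -V(x)\le-\alpha_1(\|x\|_X)$. Your discussion of Lipschitz continuity of $V$ --- that the supremum is effectively taken over a compact interval $[0,T(r)]$ determined by the $\KL$-decay, on which the flow is Lipschitz in the initial state by Gr\"onwall and Assumption~\ref{Assumption1}(i) --- is the right argument and carries over unchanged to the corrected construction.
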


This result is reminiscent of a classical result on the robustness of
the 0-UAS property \cite[Corollary 4.2.3]{Hen81}. As an easy consequence, we have that for system \eqref{InfiniteDim} 0-UGAS implies LISS, which has already been shown in \cite[Lemma I.1]{SoW96} for ODE systems. 

In the ODE case, the assumption "$\diamondsuit$" in Proposition~\ref{Characterization_LISS} is automatically fulfilled. However, this assumption cannot be dropped for infinite-dimensional systems \eqref{InfiniteDim} as demonstrated by a counterexample in \cite[Section 4]{Mir16}.
We recall another example from \cite[Section 5]{Mir16}:
\begin{examp}
\label{AG_GS_0UGAS_LISS_not_ISS}
Consider a system $\Sigma$ with state space $X=l_1:=\{ (x_k)_{k=1}^{\infty}: \sum_{k=1}^{\infty} |x_k| <\infty  \}$
and input space $\Uc:=PC(\R_+,\R)$.

Let the dynamics of the $k$-th mode of $\Sigma$ be given by 
\begin{eqnarray}
\dot{x}_k(t) = -\frac{1}{1+|u(t)|^k}x_k(t).
\label{eq:CounterEx_AG_UGAS_no_ISS}
\end{eqnarray}
%
%
%
\hfill$\square$
\end{examp}

According to the analysis in \cite{Mir16} system
\eqref{eq:CounterEx_AG_UGAS_no_ISS} is 0-UGAS, sAG, AG with zero gain, UGS
with zero gain, and LISS with zero gain, but it is not ISS (from the main
result of the present paper it follows that \eqref{eq:CounterEx_AG_UGAS_no_ISS} is not ULIM). 
\textit{This means that all characterizations of ISS in terms of AG or LIM
  together with UGS or 0-UGAS, depicted in Figure~\ref{EqEigenschaften_ODE} are no longer valid for infinite-dimensional systems. This makes the characterization of ISS in infinite dimensions a challenging problem.}

In order to reflect the essential distinctions occurring in these
stability properties, and to obtain a proper generalization of the criteria for ISS, developed by Sontag and Wang in Proposition~\ref{Characterizations_ODEs},
we have introduced several new concepts. These are the uniform and the
strong limit property (ULIM and sLIM), strong input-to-state stability
(sISS) as well as the strong asymptotic gain property (sAG). These notions naturally extend the notions of LIM, AG and UAG introduced in \cite{SoW96}.

The first positive result in characterizations of ISS is the following Lyapunov characterization of ISS, shown in \cite{MiW17c}.
\begin{Satz}
\label{thm:Characterization_ISS}
Assume that $f:X \times U \to X$ is bi-Lipschitz continuous on bounded subsets, that is:
\begin{enumerate}
    \item For all $C>0$ there is $L^1_f(C)>0$, such that
\begin{eqnarray*}
\hspace{-6mm}x, y \in \overline{B_C},\ v\in U \srs \|f(x,v)-f(y,v)\|_X \leq L^1_f(C) \|x-y\|_X.
\end{eqnarray*}
    \item  For all $C>0$ there is $L^2_f(C)>0$, such that
\begin{eqnarray*}
\hspace{-6mm}x\in X,\ u,v \in \overline{B_{C,U}} \srs \|f(x,u)-f(x,v)\|_X \leq L^2_f(C) \|u-v\|_U.
\end{eqnarray*}
\end{enumerate}
Then \eqref{InfiniteDim} is ISS if and only if there exists a Lipschitz continuous ISS Lyapunov function for \eqref{InfiniteDim}.
\end{Satz}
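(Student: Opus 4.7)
\medskip
\noindent\textbf{Proof proposal.} The two directions are quite asymmetric in difficulty; I would treat them separately.

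For the sufficiency direction (a Lipschitz continuous ISS Lyapunov function implies ISS), the plan is classical. Fix $(x,u)$ and consider the scalar map $t \mapsto V(\phi(t,x,u))$. The Lipschitz continuity of $V$ together with Assumption (1) on $f$ yields local absolute continuity of this map, and the dissipation inequality \eqref{DissipationIneq} holds almost everywhere along the trajectory. A comparison argument for scalar ODEs then produces $V(\phi(t,x,u)) \leq \max\{\tilde\beta(V(x),t),\tilde\chi(\|u\|_\Uc)\}$ for some $\tilde\beta \in \KL$ and $\tilde\chi \in \Kinf$, and sandwiching via $\psi_1 \leq V \leq \psi_2$ gives the estimate \eqref{iss_sum}.

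For the necessity direction (ISS implies existence of a Lipschitz ISS Lyapunov function), the plan is to adapt the Sontag--Wang variational construction to the Banach space setting. First, I would apply Sontag's $\KL$-lemma to reshape the ISS bound into an exponentially decaying form: there exist $\alpha,\alpha_2,\tilde\gamma\in\Kinf$ such that $\alpha(\|\phi(t,x,u)\|_X) \leq e^{-t}\alpha_2(\|x\|_X) + \tilde\gamma(\|u\|_\Uc)$. Then define
\[
V(x) := \sup_{u\in\Uc}\ \sup_{t\geq 0}\ \bigl[\alpha(\|\phi(t,x,u)\|_X) - \tilde\gamma(\|u\|_\Uc)\bigr]_+,
\]
or a suitably weighted variant to guarantee finiteness and positive definiteness. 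The sandwich bounds $\psi_1(\|x\|_X) \leq V(x) \leq \psi_2(\|x\|_X)$ follow directly: the lower bound by evaluating at $u\equiv 0,\ t=0$, the upper bound from the reshaped ISS estimate. The dissipation inequality is then extracted from the cocycle property: for small $h>0$,
\[
V(\phi(h,x,u)) \leq e^{-h}V(x) + C\,\tilde\gamma(\|u\|_\Uc) + o(h),
\]
after which passing to the Dini limit yields \eqref{DissipationIneq}.

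The main obstacle is Lipschitz continuity of $V$, which is where both Lipschitz assumptions on $f$ come into play in an essential way. Assumption (1) gives, via Gronwall, the flow estimate $\|\phi(t,x,u)-\phi(t,y,u)\|_X \leq e^{(\omega+L_f^1(C))t}\|x-y\|_X$ on bounded sets (where $\omega$ bounds the semigroup growth), while Assumption (2) provides the analogous Lipschitz dependence on $u$. The reshaped exponential ISS bound ensures that the supremum in the definition of $V$ is essentially attained over a bounded time window for each bounded set of initial conditions, so the Lipschitz constants above stay finite and transfer through $\alpha$ (restricted to compact sets it is Lipschitz by smoothing if necessary) to yield Lipschitz continuity of $V$ on bounded sets. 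This step is genuinely delicate in infinite dimensions, since without the bi-Lipschitz hypothesis on $f$ the naive construction produces only a continuous function, and the sharper regularity is what distinguishes the converse theorem here from the purely continuous $\Kinf$-bounded Lyapunov function theorems available under weaker assumptions.
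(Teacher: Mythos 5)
You should first be aware that the paper does not prove this theorem at all: it is quoted from the companion paper \cite{MiW17c}, and Section~\ref{sec:Main_Result_AND_Structure} explicitly states that the equivalences labeled by Theorems~\ref{Characterization_LISS} and~\ref{thm:Characterization_ISS} are ``cited from the literature and included to provide a broader picture.'' So there is no in-paper proof to match your argument against; the comparison has to be with the construction in \cite{MiW17c}, which follows the Sontag--Wang route: ISS is first shown to imply a \emph{robust stability} property (existence of a margin $\rho\in\Kinf$ such that the system driven by any disturbance satisfying $\|d(t)\|\leq\rho(\|x(t)\|_X)$ is 0-UGAS), and a converse Lyapunov theorem for that robustly stable system then yields a Lipschitz $V$, which is finally verified to be an ISS Lyapunov function. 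Your proposal replaces this detour by the direct variational formula $V(x)=\sup_{u}\sup_{t}[\alpha(\|\phi(t,x,u)\|_X)-\tilde\gamma(\|u\|_\Uc)]_+$, which is a genuinely different (and more fragile) route.

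Two concrete gaps. First, in the sufficiency direction, $t\mapsto\phi(t,x,u)$ is only a \emph{mild} solution; for $x\notin D(A)$ the map $t\mapsto T(t)x$ need not be absolutely continuous (nor of bounded variation), so composing with a Lipschitz $V$ does not give local absolute continuity of $t\mapsto V(\phi(t,x,u))$, and you cannot integrate \eqref{DissipationIneq} ``almost everywhere.'' The correct argument works directly with the upper Dini derivative and a comparison principle for continuous functions (as in \cite[Proposition~5]{DaM13}); the conclusion survives, but your justification does not. Second, and more seriously, in the necessity direction the dissipation inequality does not fall out of the cocycle property the way you claim. Writing $\phi(t,\phi(h,x,u),v)=\phi(t+h,x,w)$ with $w$ the concatenation of $u|_{[0,h]}$ and $v$, one has $\|w\|_\Uc\geq\|v\|_\Uc$, so the estimate runs the wrong way: the best one extracts is $V(\phi(h,x,u))\leq V(x)+\tilde\gamma(\|u\|_\Uc)$, i.e.\ non-increase up to a gain offset, with no strictly negative term $-\alpha(\|x\|_X)h$. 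Obtaining strict decay forces either a time-dependent weight inside the supremum (which then threatens finiteness and, near $x=0$, destroys the bounded-time-window argument you rely on for Lipschitz continuity, since $\ln(\alpha_2(r)/\alpha(r))$ can blow up as $r\to0$) or the robust-stability reduction that Sontag and Wang, and \cite{MiW17c}, actually use. As written, the decrease estimate $V(\phi(h,x,u))\leq e^{-h}V(x)+C\tilde\gamma(\|u\|_\Uc)+o(h)$ is asserted rather than proved, and this is precisely the step where the direct construction is known to fail.
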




\subsection{Main result and structure of the paper}
\label{sec:Main_Result_AND_Structure}

\begin{figure*}[tbh]
\centering
\begin{tikzpicture}[>=implies,thick]

\node (ISS) at (-0.5,0.4) {ISS};
\node (UAG) at (-2.7,0.4) {UAG};

\node (ISSLF) at (3,0.4) {\color{blue} $\exists$ ISS-LF};

\draw [rounded corners] (-8.4,1.2) rectangle (-0.2,0.1);
\node (Thm5) at (-4.4,0.9) {\footnotesize Theorem~\ref{thm:UAG_equals_ULIM_plus_LS}};

\node (Thm6) at (-8.75,-0.1) {\footnotesize \color{blue} Theorem~\ref{thm:MainResult_Characterization_ISS_EQ_Banach_Spaces}};
\node (Thm3) at (1.05,0.9) {\footnotesize \color{blue} Theorem~\ref{thm:Characterization_ISS}};

 \node (Thm8) at (-3.2,-1) {\footnotesize Theorem~\ref{wISS_equals_sAG_GS}};

 \node (Thm2) at (4.6,-2.38) {\footnotesize \color{blue} Theorem~\ref{Characterization_LISS}};

\node (Rem) at (-4,-2.35) {\footnotesize Remark~\ref{rem:LIMAG}};

\draw [rounded corners] (-6.4,-1.8) rectangle (-0.2,-0.7);

\draw[thick,double equal sign distance,<->] (ISS) to (UAG);
\draw[thick,double equal sign distance, blue,dashed,<->] (ISSLF) to (ISS);

\node (ULIM_UGS) at (-4.7,0.4) {ULIM$\,\wedge\,$UGS};
\node (ULIM_ULS) at (-7.4,0.4) {ULIM$\,\wedge\,$ULS};
\node (ULIM_0ULS) at (-10.2,0.4) {\color{blue} ULIM$\,\wedge\,$0-ULS};

\draw[thick,double equal sign distance,<->] (ULIM_UGS) to (UAG);
\draw[thick,double equal sign distance,<->] (ULIM_ULS) to (ULIM_UGS);
\draw[thick,double equal sign distance,blue,dashed,<->] (ULIM_0ULS) to (ULIM_ULS);

\node (sISS) at (-0.6,-1.5) {sISS};
\node (sAG_UGS) at (-2.7,-1.5) {sAG$\,\wedge\,$UGS};
\node (sLIM_UGS) at (-5.4,-1.5) {sLIM$\,\wedge\,$UGS};
\draw[<->,double equal sign distance] (sISS) to (sAG_UGS);

\draw[<->,double equal sign distance] (sLIM_UGS) to (sAG_UGS);

\draw[thick,double equal sign distance,->] (-3.3,0) to (-3.3,-0.6);
\draw[red,double equal sign distance,->,degil]  (-2.95,-0.6) to (-2.95,0);
\node[red] (notImp_ix) at (-2.45,-0.3) {\footnotesize(ix)};

\draw[red,double equal sign distance,->,degil]  (0.9,-2.45) to (0.2,-2.45);
\node[red] (vi) at (1.3,-2.45) {\footnotesize(vi)};

\draw[red,double equal sign distance,<-,degil]  (0.9,-2.95) to (0.2,-2.95);
\node[red] (vii) at (1.3,-2.95) {\footnotesize(vii)};

\draw[red,double equal sign distance,->,degil]  (0.9,-1.3) to (0.2,-1.3);
\node[red] (iv) at (1.3,-1.3) {\footnotesize(iv)};

\draw[red,double equal sign distance,<-,degil]  (0.9,-1.7) to (0.2,-1.7);
\node[red] (v) at (1.3,-1.7) {\footnotesize(v)};



\node (AG_UGS) at (-2.7,-2.75) {AG$\,\wedge\,$UGS};
\node (LIM_UGS) at (-5.4,-2.75) {LIM$\,\wedge\,$UGS};

\node (AG_ULS) at (-2.7,-3.9) {AG$\,\wedge\,$ULS};

\node (AG_0UGAS) at (3.1,-1.5) {AG$\,\wedge\,$0-UGAS};
\node (AG_LISS) at (5.8,-2.75) {\color{blue}AG$\,\wedge\,$LISS};
\node (AG_0UAS) at (3.1,-2.75) {AG$\,\wedge\,$0-UAS};
\node (AG_0LS) at (3.1,-3.9) {AG$\,\wedge\,$0-ULS};
\node (AG_0GAS) at (5.8,-3.9) {AG$\,\wedge\,$0-GAS};


\draw[double equal sign distance,->]  ($(AG_UGS)+(-0.2,-0.3)$) to ($(AG_ULS)+(-0.2,0.3)$);
\draw[red,double equal sign distance,<-,degil]  ($(AG_UGS)+(0.2,-0.3)$) to ($(AG_ULS)+(0.2,0.3)$);
\node[red] (viii) at ($\weight*(AG_ULS)+\weight*(AG_UGS) + (0.75,0)$) {\footnotesize(viii)};

\draw[double equal sign distance,->]  ($(AG_0UGAS)+(-0.2,-0.3)$) to ($(AG_0UAS)+(-0.2,0.3)$);
\draw[red,double equal sign distance,<-,degil]  ($(AG_0UGAS)+(0.2,-0.3)$) to ($(AG_0UAS)+(0.2,0.3)$);
\node[red] (ii) at ($\weight*(AG_0UAS)+\weight*(AG_0UGAS) + (0.6,0)$) {\footnotesize(ii)};

\draw[double equal sign distance,->]  ($(AG_0UAS)+(-0.2,-0.3)$) to ($(AG_0LS)+(-0.2,0.3)$);
\draw[red,double equal sign distance,<-,degil]  ($(AG_0UAS)+(0.2,-0.3)$) to ($(AG_0LS)+(0.2,0.3)$);
\node[red] (i) at ($\weight*(AG_0LS)+\weight*(AG_0UAS) + (0.6,0)$) {\footnotesize(i)};

%

\coordinate (ISS_1) at ($(ISS) + (-0.5,0)$);
\coordinate (AG_0UGAS_1) at ($(AG_0UGAS) + (-0.5,0)$);
\coordinate (coord1) at ($\third*(ISS_1)+\twothirds*(AG_0UGAS_1)$);
\coordinate (coord2) at ($\twothirds*(ISS_1)+\third*(AG_0UGAS_1)$);

\draw[red,double equal sign distance,->,degil]  (coord1) to (coord2);
\node[red] (iii) at ($\weight*(ISS_1) + \weight*(AG_0UGAS_1) + (-0.45,-0.15)$) {\footnotesize(iii)};

\draw[thick,double equal sign distance,->] ($(sAG_UGS)+(-0.2,-0.37)$) to ($(AG_UGS)+(-0.2,0.27)$);

  \coordinate (A2) at ($(sAG_UGS)+(0.2,-0.37)$);
  \coordinate (B2) at ($(AG_UGS)+(0.2,0.27)$);
\node[red] (Quest) at (-2.5,-2.23) {\footnotesize ???};
\draw[double equal sign distance,-] (B2) to ($(B2)+(0,0.09)$);
\draw[double equal sign distance,->] ($(B2)+(0,0.42)$) to (A2);

\draw[double equal sign distance,->]  ($(AG_UGS)+(-0.2,-0.3)$) to ($(AG_ULS)+(-0.2,0.3)$);

\draw[thick,double equal sign distance,<->] (AG_UGS) to (LIM_UGS);

\draw[thick,double equal sign distance,->] (ISS) to (AG_0UGAS);
\draw[thick,double equal sign distance,blue,dashed,<->]  (AG_LISS) to (AG_0UAS);

\draw[thick,double equal sign distance,<->]  (AG_0LS) to (AG_0GAS);

\draw[thick,double equal sign distance,->] ($(AG_ULS)+(0.9,-0.12)$)  to ($(AG_0LS)+(-1.1,-0.12)$);

  \coordinate (A1) at ($(AG_ULS)+(0.9,0.12)$);
  \coordinate (B1) at ($(AG_0LS)+(-1.1,0.12)$);
\path (B1) -- node (Q) {\footnotesize\color{red}{???}} (A1);
    \draw[thick,double equal sign distance,->] (B1) -- (Q) -- (A1);


\end{tikzpicture}
\caption[caption]{Relations between stability properties of
  infinite-dimensional systems, which have a robust equilibrium point and
  bounded reachability sets:
\begin{itemize}
\setlength{\itemindent}{5mm}
   \item Black arrows show implications or equivalences which hold for general control systems
in infinite dimensions.
\item {\color{red}Red arrows (with the negation sign)}
are implications which do not hold, due to the counterexamples
presented in this paper (see Remark~\ref{rem:Nonimplications}).
\item {\color{blue}Blue dashed equivalences} are proved only for systems of the form \eqref{InfiniteDim} and under certain additional conditions.
\item Black arrows with question marks inside mean that it is not known right now (as far as the authors are concerned), whether the converse implications hold or not.
\end{itemize}
}
\label{ISS_Equiv}
\end{figure*}

The central result in this paper is the following theorem:
\begin{Satz}
\label{thm:MainResult_Characterization_ISS}
Let $\Sigma=(X,\Uc,\phi)$ be a forward complete system satisfying
the BRS and the CEP property. 
Then the relations depicted in Figure~\ref{ISS_Equiv} hold. 

Black arrows show implications or equivalences which hold for the class of
infinite dimensional systems defined in Definition~\ref{Steurungssystem}; 
blue arrows are valid for semi-linear systems of the class
\eqref{InfiniteDim} under additional assumptions;
the red arrows (with the negation sign) are  implications
which do not hold, due to the counterexamples presented in this paper;
the arrows with question marks indicate that it is not known to us, whether or not these implication hold. 
%
%
\end{Satz}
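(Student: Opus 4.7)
The proof plan is to establish the many implications bundled into this figure by treating them in three strata, following the organization of the diagram, and then exhibiting counterexamples for the non-implications. The black arrows in the ULIM row, \textrm{ULIM}\,$\wedge$\,\textrm{ULS} $\Leftrightarrow$ \textrm{ULIM}\,$\wedge$\,\textrm{UGS} $\Leftrightarrow$ \textrm{UAG} $\Leftrightarrow$ \textrm{ISS}, form the heart of the statement and are consolidated into Theorem~\ref{thm:UAG_equals_ULIM_plus_LS}; all other black arrows follow either tautologically from the definitions (e.g.\ \textrm{UAG} $\Rightarrow$ \textrm{AG}\,$\wedge$\,0-UGAS using CEP and BRS to extract a $\mathcal{KL}$-decay for $u\equiv 0$), or from Remark~\ref{rem:LIMAG} (for \textrm{AG}\,$\wedge$\,\textrm{UGS} $\Leftrightarrow$ \textrm{LIM}\,$\wedge$\,\textrm{UGS}), or from Theorem~\ref{wISS_equals_sAG_GS} for the sISS row.

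For the core equivalence, \textrm{ISS} $\Rightarrow$ \textrm{UAG} is immediate from the $\mathcal{KL}$-decay bound. The reverse \textrm{UAG} $\Rightarrow$ \textrm{ISS} is standard once \textrm{UGS} is known: combine the uniform gain time $\tau(\varepsilon,r)$ with UGS via the cocycle property \eqref{axiom:Cocycle} to construct $\beta\in\mathcal{KL}$ by the familiar sup-over-spheres construction. The implication \textrm{UAG} $\Rightarrow$ \textrm{ULIM}\,$\wedge$\,\textrm{UGS} is straightforward: \textrm{ULIM} is weaker than \textrm{UAG}, and \textrm{UGS} follows from \textrm{UAG} combined with \textrm{CEP} and \textrm{BRS} (on $[0,\tau(\varepsilon,r)]$ use BRS to bound the transient, on $[\tau(\varepsilon,r),\infty)$ use UAG). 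The weakening \textrm{ULIM}\,$\wedge$\,\textrm{UGS} $\Rightarrow$ \textrm{ULIM}\,$\wedge$\,\textrm{ULS} is trivial. The step I expect to be the main obstacle is the converse \textrm{ULIM}\,$\wedge$\,\textrm{ULS} $\Rightarrow$ \textrm{ULIM}\,$\wedge$\,\textrm{UGS}, and then \textrm{ULIM}\,$\wedge$\,\textrm{UGS} $\Rightarrow$ \textrm{UAG}: these require upgrading a local uniform bound to a global one using an iterative cocycle argument that exploits the uniform time $\tau(\varepsilon,r)$ from ULIM together with BRS to step arbitrarily long trajectories back into the ball where ULS/UGS applies, and then to obtain the uniform attraction time across the whole sphere of radius $r$. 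This is the content of Theorem~\ref{thm:UAG_equals_ULIM_plus_LS} and will be proved in Section~\ref{ISS_equals_UAG}.

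The middle stratum, \textrm{sISS} $\Leftrightarrow$ \textrm{sAG}\,$\wedge$\,\textrm{UGS} $\Leftrightarrow$ \textrm{sLIM}\,$\wedge$\,\textrm{UGS}, is exactly Theorem~\ref{wISS_equals_sAG_GS} and will be handled in Section~\ref{sec:WeakISS} by arguments parallel to the ULIM row, with the key difference that $\tau$ now depends on the individual state $x$ (not just its norm), so the cocycle iteration is pointwise rather than uniform over spheres. The vertical implication \textrm{sAG}\,$\wedge$\,\textrm{UGS} $\Rightarrow$ \textrm{AG}\,$\wedge$\,\textrm{UGS} is tautological; the open question mark there reflects that the converse reduction is not known. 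The blue arrows are inherited directly from the auxiliary results quoted in the excerpt: Theorem~\ref{Characterization_LISS} gives \textrm{AG}\,$\wedge$\,\textrm{LISS} $\Leftrightarrow$ \textrm{AG}\,$\wedge$\,0-UAS for semi-linear systems under assumption ``$\diamondsuit$''; Theorem~\ref{thm:Characterization_ISS} yields the Lyapunov equivalence $\exists$\,ISS-LF $\Leftrightarrow$ \textrm{ISS} for bi-Lipschitz $f$; and Theorem~\ref{thm:MainResult_Characterization_ISS_EQ_Banach_Spaces} handles \textrm{ULIM}\,$\wedge$\,0-ULS $\Leftrightarrow$ \textrm{ULIM}\,$\wedge$\,\textrm{ULS} for semi-linear systems.

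Finally, the red non-implication arrows are established by the counterexamples constructed in Section~\ref{sec:Counterexamples}. Example~\ref{AG_GS_0UGAS_LISS_not_ISS} (already recalled here) falsifies \textrm{AG}\,$\wedge$\,0-UGAS $\Rightarrow$ \textrm{ISS} and, upon verifying sAG and UGS, also falsifies \textrm{sISS} $\Rightarrow$ \textrm{ISS} and \textrm{AG}\,$\wedge$\,\textrm{LISS} $\Rightarrow$ \textrm{ISS}; additional tailored infinite-dimensional constructions (typically diagonal systems on $\ell_p$ with input-dependent eigenvalue decay rates) will separate \textrm{AG}\,$\wedge$\,\textrm{ULS} from \textrm{AG}\,$\wedge$\,\textrm{UGS}, \textrm{AG}\,$\wedge$\,0-ULS from \textrm{AG}\,$\wedge$\,0-UAS, and so on down the diagram. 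Collecting these pieces assembles exactly the black, blue, and red arrows of Figure~\ref{ISS_Equiv}, completing the proof.
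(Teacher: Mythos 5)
Your proposal is correct and follows essentially the same route as the paper: the ULIM row is exactly Theorem~\ref{thm:UAG_equals_ULIM_plus_LS} (with the iterative cocycle argument you anticipate appearing as Proposition~\ref{prop:ULIM_plus_mildRFC_implies_pUGS} and Lemma~\ref{lem:ULIM_plus_GS_implies_UAG}), the sISS row is Theorem~\ref{wISS_equals_sAG_GS}, the blue arrows are inherited from Theorems~\ref{Characterization_LISS}, \ref{thm:Characterization_ISS} and \ref{thm:MainResult_Characterization_ISS_EQ_Banach_Spaces}, and the red arrows come from Section~\ref{sec:Counterexamples} via Remark~\ref{rem:Nonimplications}. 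The only cosmetic difference is that you derive UGS directly from UAG\,$\wedge$\,CEP\,$\wedge$\,BRS by splitting the time axis, whereas the paper routes this through ULS (Lemma~\ref{UAG-ULS}) and UGB (Lemma~\ref{lem:LS_plus_pUGS_equals_GS}) to absorb the additive constant near the origin --- the same ingredients in a slightly different order.
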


\begin{proof}
Follows from Theorems~\ref{thm:UAG_equals_ULIM_plus_LS},
\ref{wISS_equals_sAG_GS}. The counterexamples for the red arrows are
discussed in Section~\ref{sec:Counterexamples}, see Remark~\ref{rem:Nonimplications}.
\end{proof}

For ODEs all the combinations in Figure~\ref{ISS_Equiv} are
equivalent as for the system~\eqref{eq:ODE_System} we have that AG\,$\wedge$\,0-GAS is
equivalent to ISS by Proposition~\ref{Characterizations_ODEs}.
In contrast, for infinite-dimensional systems, these notions are divided into several groups, which are not equivalent to each other. 

The proof of Theorem~\ref{thm:MainResult_Characterization_ISS} will be given in several steps.

The upper level in Figure~\ref{ISS_Equiv} consists of notions, which are
equivalent to ISS. As in the ODE case, ISS is equivalent to the uniform
asymptotic gain property, and to the existence of a Lipschitz continuous
Lyapunov function. By Example~\ref{AG_GS_0UGAS_LISS_not_ISS}, ISS is not
equivalent to combinations of AG or LIM together with LS or 0-UGAS.
But it turns out, that ISS is equivalent to the combination of ULIM and
LS. This shows that uniformity of attractivity/reachability plays a much
more important role in the infinite-dimensional setting than it does in
the ODE case. The main result in this respect is:
\begin{Satz}
\label{thm:UAG_equals_ULIM_plus_LS}
Let $\Sigma=(X,\Uc,\phi)$ be a forward complete control system. The following statements are equivalent:
\begin{itemize}
    \item[(i)] $\Sigma$ is ISS.
    \item[(ii)] $\Sigma$ is UAG, CEP, and BRS.
    \item[(iii)] \label{cond:ULIM_ULS_BRS_is_ISS} $\Sigma$ is ULIM,
          ULS, and BRS.
    \item[(iv)] $\Sigma$ is ULIM and UGS.
\end{itemize}
\end{Satz}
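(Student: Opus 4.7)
I would prove the four equivalences via the cycle $(i)\Rightarrow(ii)\Rightarrow(iii)\Rightarrow(iv)\Rightarrow(i)$. The implication $(i)\Rightarrow(ii)$ is immediate from the ISS estimate \eqref{iss_sum}: UAG follows by choosing $\tau(\eps,r)$ with $\beta(r,\tau)\le\eps$; CEP follows from $\beta(\delta,0)+\gamma(\delta)\to 0$ as $\delta\to 0$; and BRS follows from the uniform bound $\beta(\|x\|_X,0)+\gamma(\|u\|_\Uc)$ on bounded sets. For $(ii)\Rightarrow(iii)$, UAG trivially gives ULIM and BRS is assumed, so the work is to deduce ULS. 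For each $n\in\N$ I would set $h_n:=\tau(1/n,1/n)$ and apply CEP with $\eps=1/n$, $h=h_n$ to extract $\delta_n>0$ such that $\|x\|_X,\|u\|_\Uc\le\delta_n$ forces $\|\phi(t,x,u)\|_X\le 1/n$ on $[0,h_n]$, while UAG then gives $\|\phi(t,x,u)\|_X\le 1/n+\gamma(\delta_n)$ for $t\ge h_n$. Passing to a subsequence with $\delta_n\downarrow 0$ and interpolating between the thresholds produces a $\sigma\in\Kinf$ with $\|\phi(t,x,u)\|_X\le\sigma(\|x\|_X)+\sigma(\|u\|_\Uc)$ for all small $x,u$, which is ULS.

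\textbf{From ULS, ULIM, BRS to UGS -- the main obstacle.} The crux is $(iii)\Rightarrow(iv)$, where UGS must be manufactured from the only-local ULS together with the nonuniform ULIM and BRS. For $\|x\|_X\le r$ and $\|u\|_\Uc\le s$, ULIM gives $t_1\in[0,\tau(\eps,r)]$ with $\|\phi(t_1,x,u)\|_X\le\eps+\gamma_1(s)=:r'$. Using shift invariance of $\Uc$ and the cocycle, I would apply ULIM to the shifted trajectory and iterate, obtaining that in every time window of length $\tau(\eps,r')$ the trajectory enters $B_{r'}$; BRS then controls the excursions within each window. This yields the time-uniform estimate
\begin{equation*}
\sup_{t\ge 0}\|\phi(t,x,u)\|_X \;\le\; \max\!\bigl(M(r,s,\tau(\eps,r)),\;M(r',s,\tau(\eps,r'))\bigr),
\end{equation*}
where $M$ denotes the BRS supremum. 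Using the monotonicity of BRS to dominate $M(a,b,c)\le\eta(a+b+c)$ for some $\eta\in\Kinf$, choosing $\eps$ so that $\tau(\eps,r)=0$ whenever $r\le\eps$, and handling the small-$x$/small-$u$ regime separately via ULS, I would fit the estimate into the form $\sigma(\|x\|_X)+\gamma(\|u\|_\Uc)$ required for UGS. The delicate point -- and the main obstacle -- is enforcing $\gamma(0)=0$: a naive choice of $\eps$ leaves a residual $\eta(\eps)$ at $s=0$ coming from the second $M$-term, and removing it requires either letting $\eps$ shrink together with $s$, or deducing 0-UGS separately from ULS$+$ULIM$+$BRS via the same iteration.

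\textbf{Closing the cycle: $(iv)\Rightarrow(i)$.} Starting from ULIM$+$UGS I first derive UAG: ULIM supplies $t_1\le\tau(\eps,r)$ with $\|\phi(t_1,x,u)\|_X\le\eps+\gamma_1(\|u\|_\Uc)$, and applying UGS from time $t_1$ through the cocycle yields, for every $t\ge\tau(\eps,r)$,
\begin{equation*}
\|\phi(t,x,u)\|_X \;\le\; \sigma\bigl(\eps+\gamma_1(\|u\|_\Uc)\bigr)+\gamma_0(\|u\|_\Uc) \;\le\; \sigma(2\eps)+\sigma(2\gamma_1(\|u\|_\Uc))+\gamma_0(\|u\|_\Uc),
\end{equation*}
where $\sigma,\gamma_0$ are the UGS functions. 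Rechoosing $\eps$ so that $\sigma(2\eps)$ matches the desired tolerance converts this into UAG with gain $\sigma\circ(2\gamma_1)+\gamma_0$. Finally, UAG$+$UGS$\Rightarrow$ISS by the standard $\KL$-construction of Sontag and Wang: one defines a candidate $\beta(r,t)$ as a $\KL$-dominant of the supremum of $\|\phi(t,x,u)\|_X-\gamma(\|u\|_\Uc)$ over $\|x\|_X\le r$ and $u\in\Uc$, using the uniform decay from UAG together with the uniform boundedness from UGS, and verifies that $\beta\in\KL$ to conclude \eqref{iss_sum}.
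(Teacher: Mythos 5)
Your proposal is correct and follows essentially the same route as the paper: the cycle (i)$\Rightarrow$(ii)$\Rightarrow$(iii)$\Rightarrow$(iv)$\Rightarrow$(i) built on the same key lemmas --- UAG\,$\wedge$\,CEP\,$\Rightarrow$\,ULS, ULIM\,$\wedge$\,BRS\,$\Rightarrow$\,UGB which together with ULS gives UGS (the paper cites Sontag--Wang's Lemma~I.2 for exactly the residual-constant issue you flag), ULIM\,$\wedge$\,UGS\,$\Rightarrow$\,UAG via the cocycle property and $\sigma(a+b)\leq\sigma(2a)+\sigma(2b)$, and the standard $\KL$-construction for UAG\,$\wedge$\,UGS\,$\Rightarrow$\,ISS. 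The one loosely stated step is your recurrence claim in (iii)$\Rightarrow$(iv) that ``in every window of length $\tau(\eps,r')$ the trajectory enters $B_{r'}$'', which does not follow directly from ULIM because the witness time may be $t=0$; the paper instead considers the last time $t_m\leq t$ at which the trajectory lies in $\overline{B_r}$ and uses that the ULIM target $3r/4$ is \emph{strictly} smaller than $r$, so that continuity still produces the contradiction when the witness time is zero.
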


\begin{proof}
The proof of this result is divided into several lemmas, which will be shown in Section~\ref{ISS_equals_UAG}. Here we show how the result follows from them.

(i) $\Rightarrow$ (ii). 
It is immediate that ISS implies CEP and BRS.
The remaining claim is shown in Lemma~\ref{ISS_implies_UAG}.

(ii) $\Rightarrow$ (iii). Evidently, UAG implies ULIM. By Lemma~\ref{UAG-ULS}\ \ UAG\,$\wedge$\,CEP implies ULS.

(iii) $\Rightarrow$ (iv). Let $\Sigma$ be ULIM and ULS. By
Proposition~\ref{prop:ULIM_plus_mildRFC_implies_pUGS}, ULIM together with
BRS implies UGB. By Lemma~\ref{lem:LS_plus_pUGS_equals_GS}, UGS is
equivalent to  UGB\,$\wedge$\,ULS. 

(iv) $\Rightarrow$ (ii). It is clear that UGS implies CEP and BRS.
The claim follows using Lemma~\ref{lem:ULIM_plus_GS_implies_UAG}.

(ii) $\Rightarrow$ (i). Follows from the  equivalence (ii) $\Iff$ (iv) and Lemma~\ref{lem:UAG_implies_ISS}.
\end{proof}

The next step in the outline of the proof of
Theorem~\ref{thm:MainResult_Characterization_ISS} is as follows.
In Section~\ref{sec:WeakISS} we introduce the new concept of strong input-to-state stability (sISS). 
For nonlinear ODE systems this is equivalent to ISS, see Proposition~\ref{prop:ULIM_equals_LIM_in_finite_dimensions}, and for linear infinite-dimensional systems without inputs sISS is equivalent to strong stability of the associated semigroup $T$ (which justifies the name "strong" for this notion).
We show in Theorem~\ref{wISS_equals_sAG_GS} that
\begin{center}
sISS \quad $\Iff$\quad sAG $\wedge$ UGS \quad $\Iff$\quad sLIM $\wedge$ UGS.
\end{center}

On the other hand, ISS implies the combination AG $\wedge$ 0-UGAS, which
is very different from sISS: sISS does not imply the existence of a uniform
convergence rate for the undisturbed system (and thus, it does not imply
0-UGAS). At the same time AG $\wedge$ 0-UGAS does not ensure the existence of
uniform global bounds for a system with inputs, i.e. UGS is not implied. 

Below the level of sISS and AG $\wedge$ 0-UGAS there are further levels with even weaker properties.
The counterexamples, discussing delicate properties of
infinite-dimensional systems and giving the necessary counterexamples for Figure~\ref{ISS_Equiv}  are discussed in detail in Section~\ref{sec:Counterexamples}.

Finally, we specialize ourselves to the important subclass of
infinite-dimensional systems described by \eqref{InfiniteDim} 
 and discuss
the proof of the blue (dashed) implications. The equivalences labeled by
Theorems~\ref{Characterization_LISS} and~\ref{thm:Characterization_ISS} are cited from the literature and included to
provide a broader picture.
 We show in Section~\ref{sec:Equations_in_Banach_space} that for such
 systems standard assumptions on the nonlinearity $f$ together with the
 BRS property imply continuity of the flow at trivial equilibrium.
This helps to make our main result more precise for systems of the form \eqref{InfiniteDim}.
\begin{Satz}
\label{thm:MainResult_Characterization_ISS_EQ_Banach_Spaces}
Let \eqref{InfiniteDim} satisfy Assumption~\ref{Assumption1} and property
"$\diamondsuit$" from 
Theorem~\ref{Characterization_LISS}. Then the following statements are equivalent.
\begin{itemize}
    \item[(i)] \eqref{InfiniteDim} is ISS.
    \item[(ii)] \eqref{InfiniteDim} is UAG and BRS.
    \item[(iii)] \eqref{InfiniteDim} is ULIM, ULS and BRS.
    \item[(iv)] \eqref{InfiniteDim} is ULIM and UGS.
    \item[(v)] \eqref{InfiniteDim} is ULIM, 0-ULS, and BRS.
\end{itemize}
\end{Satz}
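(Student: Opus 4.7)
I would derive Theorem~\ref{thm:MainResult_Characterization_ISS_EQ_Banach_Spaces} from two ingredients. The equivalences (i)$\Leftrightarrow$(ii)$\Leftrightarrow$(iii)$\Leftrightarrow$(iv) should follow almost immediately from the general Theorem~\ref{thm:UAG_equals_ULIM_plus_LS}, once one verifies that for the semilinear class \eqref{InfiniteDim} under Assumption~\ref{Assumption1} and property "$\diamondsuit$", the CEP clause appearing in Theorem~\ref{thm:UAG_equals_ULIM_plus_LS}(ii) is automatic from BRS. The novel content is item (v), which weakens ULS to 0-ULS; for this I plan to invoke Theorem~\ref{Characterization_LISS} to lift 0-ULS (with the help of ULIM) to 0-UAS, hence to LISS, hence to ULS.

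\textbf{Step 1: BRS implies CEP for \eqref{InfiniteDim}.} Fix $h>0$ and let $M_h:=\sup_{t\in[0,h]}\|T(t)\|$, which is finite by strong continuity of $T$ combined with Banach--Steinhaus. Choose $\delta_0$ below the radius in "$\diamondsuit$", and let $R<\infty$ be a BRS bound for the reachable set on $\{t\in[0,h],\,\|x\|_X\leq\delta_0,\,\|u\|_\Uc\leq\delta_0\}$. Using Assumption~\ref{Assumption1}(i), $f(0,0)=0$, and "$\diamondsuit$" applied at $x=0$, I obtain the pointwise bound $\|f(\phi(s,x,u),u(s))\|_X\leq L_f(R)\,\|\phi(s,x,u)\|_X+\sigma(\|u\|_\Uc)$. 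Inserting this into the mild-solution identity \eqref{InfiniteDim_Integral_Form} and applying Gronwall's lemma on $[0,h]$ yields an estimate of the shape $\|\phi(t,x,u)\|_X\leq C_h\,\bigl(\|x\|_X+\sigma(\|u\|_\Uc)\bigr)$ uniformly on $[0,h]$, which vanishes as $(\|x\|_X,\|u\|_\Uc)\to(0,0)$. This is exactly CEP.

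\textbf{Steps 2--3: closing the equivalences.} Given Step~1, Theorem~\ref{thm:UAG_equals_ULIM_plus_LS} applies directly: UGS in (iv) trivially implies BRS, and the CEP hypothesis in its item (ii) is now subsumed by BRS, so (i)$\Leftrightarrow$(ii)$\Leftrightarrow$(iii)$\Leftrightarrow$(iv) transfers verbatim. The implication (iii)$\Rightarrow$(v) is immediate. For (v)$\Rightarrow$(iii), let $r_1>0$ be the 0-ULS radius with bound $\|\phi(t,x,0)\|_X\leq\sigma_1(\|x\|_X)$. For every $\eps\in(0,r_1]$, ULIM applied with $u\equiv 0$ and $r=r_1$ supplies a time $\tau(\eps)$ such that each trajectory starting in $\overline{B_{r_1}}$ visits $\overline{B_\eps}$ at some $t_*\leq\tau(\eps)$; the cocycle property together with 0-ULS then confines it to $\|\phi(t,x,0)\|_X\leq\sigma_1(\eps)$ for all $t\geq\tau(\eps)$, uniformly in $x\in\overline{B_{r_1}}$. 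Combined with the 0-ULS bound on $[0,\tau(\eps)]$, a standard $\KL$-construction produces 0-UAS. Applying Theorem~\ref{Characterization_LISS} then gives LISS, which implies ULS, and together with the assumed BRS this is precisely (iii).

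\textbf{Main obstacle.} The most delicate step is the extraction of 0-UAS from ULIM$\,\wedge\,$0-ULS inside Steps~2--3: ULIM only places trajectories in a small ball \emph{at some} time $t_*\leq\tau(\eps)$, not from then on, so the catch is to use 0-ULS to trap them past $t_*$ while keeping the estimate uniform in $x\in\overline{B_{r_1}}$. The semilinear structure is unavoidable at the final step, where Theorem~\ref{Characterization_LISS} lifts 0-UAS to LISS, and the counterexample in \cite[Section 4]{Mir16} (where "$\diamondsuit$" is violated) indicates that this assumption cannot be dispensed with.
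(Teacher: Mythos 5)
Your proposal is correct and follows essentially the same route as the paper: CEP is deduced from BRS via a Gronwall argument on the mild-solution formula (this is the paper's Lemma~\ref{lem:RobustEquilibriumPoint}), the equivalence of (i)--(iv) is then read off from Theorem~\ref{thm:UAG_equals_ULIM_plus_LS}, and (v)$\Rightarrow$(iii) is obtained by combining ULIM with $u\equiv 0$, 0-ULS and the cocycle property to trap trajectories after the ULIM hitting time, and then invoking Theorem~\ref{Characterization_LISS} to upgrade to LISS and hence ULS. The only (harmless) deviation is that you stop at the local property 0-UAS, which is all Theorem~\ref{Characterization_LISS} requires, whereas the paper uses BRS to build the global majorant $\tilde\beta$ and concludes 0-UGAS before citing that theorem.
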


\begin{proof}
    By Lemma~\ref{lem:RobustEquilibriumPoint}, it holds for system
    \eqref{InfiniteDim} that Assumption~\ref{Assumption1} together with BRS
    implies CEP. In conjunction with Theorem~\ref{thm:UAG_equals_ULIM_plus_LS}
    this shows equivalence of  (i) -- (iv).

Clearly, (iii) implies (v). Assume that \eqref{InfiniteDim} is ULIM,
0-ULS, and BRS. 
By the BRS property the value
  \begin{equation*}
      \tilde{\beta}(r,t) := \sup \{ \|\phi(t,x,0)\|_X \midset x \in B_r \} 
  \end{equation*}
is finite for all $(r,t) \in \R_+^2$. The function $\tilde{\beta}$ is
increasing in $r$. 
By 0-ULS of \eqref{InfiniteDim},  $\tilde\beta$ is continuous in the first argument at $0$.

Also for fixed $r\geq 0$ we claim that $\lim_{t\to\infty}
 \tilde{\beta}(r,t) = 0$ by ULIM and 0-ULS. To see this let $\sigma$ be
 the function characterizing 0-ULS. Given $\varepsilon>0$ we may by ULIM
 choose a $\tau >0$ such that for all $x\in B_r$ there is a $t\leq \tau$
 with 
 \begin{equation*}
     \|\phi(t,x,0)\|_X \leq \sigma^{-1}(\varepsilon). 
 \end{equation*}
 By 0-ULS and the cocycle property it follows that $\|\phi(t,x,0)\|_X \leq
 \varepsilon$ for all $t\geq \tau$ so that we have the desired
 convergence. We now have that for all $x\in X$ and all $t\geq 0$
\begin{equation*}
     \|\phi(t,x,0)\|_X \leq \max \{ \tilde{\beta}(\|x\|_X,t+s) \midset s \geq
     0 \}    + \|x\|_X e^{-t}. 
 \end{equation*}
This upper bound is a well-defined function of $(\|x\|_X,t)$, continuous w.r.t. the first argument at $\|x\|_X=0$, strictly increasing in $\|x\|_X$ and
strictly decreasing to 0 in $t$. 
It is easy to see that there is a $\beta \in \mathcal{KL}$ so that
\begin{equation*}
     \|\phi(t,x,0)\|_X \leq \beta(\|x\|_X,t),
 \end{equation*}
and thus \eqref{InfiniteDim} is 0-UGAS.

Since we suppose that the assumption "$\diamondsuit$" of Theorem~\ref{Characterization_LISS} holds, Theorem~\ref{Characterization_LISS} implies LISS (and in particular, ULS) of \eqref{InfiniteDim}. Hence, (v) implies (iii).
\end{proof}

This completes the proof of Theorem~\ref{thm:MainResult_Characterization_ISS}.
In Section~\ref{sec:ODE_LIM_ULIM} we show that our results contain
Proposition~\ref{Characterizations_ODEs} as a special case. This is done
by proving that the notions of LIM and ULIM coincide for ODE systems.
Finally, we specialize our results to the system \eqref{InfiniteDim} without inputs to obtain characterizations of 0-UGAS.

\subsection{ISS via non-coercive ISS Lyapunov functions}

It is well-known that existence of an ISS Lyapunov function implies ISS.
However, construction of an ISS Lyapunov functions for infinite-dimensional systems, especially nonlinear ones, is a challenging task. Already for undisturbed linear systems over Hilbert spaces, "natural" Lyapunov function candidates constructed via solutions of Lyapunov equations are of the form $V(x):= \lel Px,x \rir$, where $\lel\cdot ,\cdot \rir$ is a scalar product in $X$ and $P$ is a linear bounded positive operator which spectrum may contain $0$. Such $V$ fail to satisfy the bound from below in \eqref{LyapFunk_1Eig_LISS}, and possess only a weaker property $V(x)>0$ for $x\neq 0$.
Hence a question appears, whether such "non-coercive" Lyapunov functions still can be used to show ISS of control systems.
A thorough study of this question for uniform global asymptotic stability has been recently performed in \cite{MiW17a}. 
In this section we extend some of the results of this work to the ISS case and show how non-coercive Lyapunov functions can be used to show ISS and ULIM properties of control systems.

\begin{Def}
\label{def:noncoercive_ISS_LF}
A continuous function $V:X \to \R_+$ is called a \textit{noncoercive ISS Lyapunov function} for a system $\Sigma = (X,\phi,\Uc)$,  if there exist $\psi_2,\alpha \in \Kinf$ and $\sigma \in \K$ such that
\begin{equation}
\label{LyapFunk_1Eig_nc_ISS}
0 < V(x) \leq \psi_2(\|x\|_X), \quad \forall x \in X
\end {equation}
and the Dini derivative of $V$ along the trajectories of $\Sigma$ satisfies
\begin{equation}
\label{DissipationIneq_nc}
\dot{V}_u(x) \leq -\alpha(\|x\|_X) + \sigma(\|u\|_{\Uc})
\end{equation}
for all $x \in X$ and $u\in \Uc$.

If \eqref{DissipationIneq_nc} holds just for $u=0$, we call $V$ a \textit{non-coercive Lyapunov function} for the undisturbed system $\Sigma$.
\end{Def}
%
%
%
Note, that in the definition of a non-coercive Lyapunov function we do not
require existence of $\psi_1\in\Kinf$: $\psi_1(\|x\|_X) \leq V(x)$ for all
$x\in X$. 


We cite the following result for a Lyapunov characterization of $0$-UGAS,
i.e. stability exclusively for the $0$ input. We note that for systems
without inputs the concept
of robust forward completeness and robust equilibrium point used in
\cite{MiW17a} are implied by BRS and CEP.
From the main result in \cite{MiW17a} it follows that:
\begin{proposition}
\label{prop:Non-coerciveLF_Theorem}
Let $\Sigma$ be BRS and CEP. Then $\Sigma$ is 0-UGAS if and only if $\Sigma$ possesses a non-coercive Lyapunov function.

For forward complete systems \eqref{InfiniteDim} satisfying Assumption 1 the same claim holds without assuming CEP.
\end{proposition}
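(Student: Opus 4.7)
The plan is to derive this proposition essentially as a corollary of the main result of \cite{MiW17a}, once the hypotheses are translated to the present setting. The key direction is sufficiency ($V$ exists $\Rightarrow$ 0-UGAS), since the converse reduces to a classical converse Lyapunov theorem.

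For necessity, if $\Sigma$ is 0-UGAS, I would invoke a standard converse Lyapunov theorem (e.g.\ a Sontag-style construction of the form $V(x)=\sup_{t\geq 0}\alpha(\|\phi(t,x,0)\|_X)$ for a carefully chosen $\alpha\in\Kinf$) to produce a \emph{coercive} Lyapunov function satisfying $\psi_1(\|x\|_X)\leq V(x)\leq \psi_2(\|x\|_X)$ and $\dot V_0(x)\leq -\alpha(\|x\|_X)$. Any such $V$ trivially fulfils Definition~\ref{def:noncoercive_ISS_LF} with $u=0$ (since the lower bound by $\psi_1$ is stronger than mere positivity), so it is a non-coercive Lyapunov function for the undisturbed system.

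For sufficiency I would quote the main theorem of \cite{MiW17a} directly. The excerpt already supplies the bridge between the two formalisms: for systems without inputs, BRS implies robust forward completeness and CEP implies the robust equilibrium point property used in \cite{MiW17a}. Hence under BRS and CEP, the non-coercive Lyapunov function produced by our hypothesis satisfies all the conditions of the cited theorem, and 0-UGAS follows. The real technical work (integrating the dissipation inequality along trajectories without a coercive lower bound, and upgrading pointwise decay to a uniform $\KL$-estimate) is carried out inside \cite{MiW17a}; at this stage one only needs to check that BRS/CEP provide the compactness/uniformity ingredients its proof requires.

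For the second half of the proposition (the improved statement for \eqref{InfiniteDim} without assuming CEP), the point is that Assumption~\ref{Assumption1} is already strong enough to furnish CEP on its own for the undisturbed system. I would argue as follows: by the mild formulation \eqref{InfiniteDim_Integral_Form} with $u\equiv 0$, boundedness of the semigroup $T$ on compact time intervals, the local Lipschitz bound \eqref{eq:Lipschitz}, and $f(0,0)=0$, a Gronwall estimate yields $\|\phi(t,x,0)\|_X \leq M_h\|x\|_X$ for $t\in[0,h]$ and $x$ in a small ball, which is precisely CEP for zero input. This is essentially the argument in Lemma~\ref{lem:RobustEquilibriumPoint} used in the proof of Theorem~\ref{thm:MainResult_Characterization_ISS_EQ_Banach_Spaces}. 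The analogous reduction shows BRS for the zero input is automatic from forward completeness plus Assumption~\ref{Assumption1} (again via Gronwall), so the first part of the proposition applies. The main obstacle is purely notational—identifying that the hypotheses of \cite{MiW17a} reduce, in the input-free setting, to exactly BRS and CEP—and this identification is already asserted in the paragraph preceding the statement.
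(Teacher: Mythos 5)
Your overall route matches the paper's: Proposition~\ref{prop:Non-coerciveLF_Theorem} is stated there without a written proof, as a direct consequence of the main theorem of \cite{MiW17a} after observing that, for the undisturbed system, BRS and CEP supply the ``robust forward completeness'' and ``robust equilibrium point'' hypotheses of that reference. Your treatment of the sufficiency direction and of the reduction of CEP to Assumption~\ref{Assumption1} via Lemma~\ref{lem:RobustEquilibriumPoint} is exactly the intended argument.

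However, one assertion in your last paragraph is genuinely false: you claim that ``BRS for the zero input is automatic from forward completeness plus Assumption~\ref{Assumption1} (again via Gronwall).'' The paper's own Example~\ref{0-GAS_but_not_GS} refutes this: the system $\Sc^1$ in \eqref{eq:GAS_not_GS} is of the form \eqref{InfiniteDim} on $l_2$ (take $A=-I$ and $f$ the cubic nonlinearity, which is Lipschitz on bounded subsets of $l_2$ and vanishes at the origin), is forward complete, 0-GAS and 0-UAS, yet fails BRS. The reason your Gronwall argument cannot deliver BRS is that the Lipschitz constant $L_f(C)$ in \eqref{eq:Lipschitz} depends on a bound $C$ for the trajectory on $[0,\tau]$, which is precisely the quantity BRS asserts and which forward completeness alone does not provide; Gronwall only closes when such an a priori bound is already available (this is why Lemma~\ref{lem:Regularity} and Lemma~\ref{lem:RobustEquilibriumPoint} both \emph{assume} BRS). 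Fortunately the error is not load-bearing: the second half of the proposition drops only CEP and retains BRS as a hypothesis, so you need only the (correct) half of your reduction, namely that Assumption~\ref{Assumption1} together with BRS yields CEP. A smaller caveat: your converse direction sketches a Sontag-type coercive construction $V(x)=\sup_{t\ge0}\alpha(\|\phi(t,x,0)\|_X)$, whose continuity in infinite dimensions is not automatic; since both you and the paper ultimately delegate this to the ``if and only if'' statement of \cite{MiW17a}, this is acceptable, but it should not be presented as routine.
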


Next we prove two results in this fashion for systems with inputs.
We will use the following lower estimate of $\K$-functions, which is easy to check:
\begin{Lemm}
\label{lem:Kfun_lower_estimates} 
For any $\alpha\in\K$ and any $a,b\geq0$ it holds that 
\begin{eqnarray}
\alpha(a+b)\geq \frac{1}{2}\alpha(a) + \frac{1}{2}\alpha(b).
\label{eq:Kfun_lower_bounds}
\end{eqnarray}
\end{Lemm}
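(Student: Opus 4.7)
The plan is to exploit only the monotonicity built into the definition of class $\K$; nothing more is needed. Without loss of generality I would assume $a \geq b$ (the inequality is symmetric in $a$ and $b$, so this costs nothing). Then from $a+b \geq a \geq b \geq 0$ and the fact that $\alpha$ is (strictly) increasing on $\R_+$, I get the chain
\[
\alpha(a+b) \;\geq\; \alpha(a) \quad\text{and}\quad \alpha(a+b) \;\geq\; \alpha(a) \;\geq\; \alpha(b).
\]
Adding these two estimates yields $2\alpha(a+b) \geq \alpha(a) + \alpha(b)$, which is the claim after dividing by $2$.

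There really is no obstacle here; the assertion is essentially a one-line consequence of monotonicity, which is precisely why the authors label it as ``easy to check.'' The only minor thing to be careful about is that one should not try to invoke any convexity or subadditivity of $\alpha$, since $\K$-functions need not possess either property; the argument must use monotonicity alone, which is exactly what the above chain does. Continuity of $\alpha$ and the normalization $\alpha(0)=0$ are not used in the proof at all, so the inequality in fact holds for any nondecreasing $\alpha:\R_+\to\R_+$.
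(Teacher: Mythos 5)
Your proof is correct: since $a+b\geq a$ and $a+b\geq b$, monotonicity of $\alpha$ gives $\alpha(a+b)\geq\max\{\alpha(a),\alpha(b)\}\geq\tfrac{1}{2}\alpha(a)+\tfrac{1}{2}\alpha(b)$, which is exactly the one-line argument the paper has in mind when it calls the lemma "easy to check" and omits the proof. (The WLOG $a\geq b$ is not even needed, and your observation that only monotonicity is used is accurate.)
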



Our first result deals with the ULIM property for general control systems:
\begin{proposition}
\label{prop:ncISS_implies_ULIM} 
Let $\Sigma=(X,\Uc,\phi)$ be a forward complete control system and assume
there exists a non-coercive ISS Lyapunov function for $\Sigma$. Then $\Sigma$ is ULIM.
\end{proposition}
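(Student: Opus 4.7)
The plan is to establish ULIM by a classical Lyapunov contradiction argument, exploiting the fact that the dissipation inequality \eqref{DissipationIneq_nc} forces $V$ to strictly decrease as long as $\|\phi(\cdot,x,u)\|_X$ remains outside a ball whose radius is determined by $\|u\|_{\Uc}$. First I define the candidate gain by $\gamma := \alpha^{-1} \circ (2\sigma)$, which lies in $\K$ since $\alpha \in \Kinf$ and $\sigma \in \K$, and which satisfies $\alpha(\gamma(s)) = 2\sigma(s)$ for all $s \geq 0$. Given $\eps, r > 0$, I then propose the reach time $\tau(\eps,r) := 2\psi_2(r)/\alpha(\eps) + 1$; crucially this depends only on $(\eps,r)$, as demanded by the definition of ULIM.

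The core estimate is as follows. Fix $x \in \overline{B_r}$ and $u \in \Uc$, and suppose, for contradiction, that $\|\phi(s,x,u)\|_X > \eps + \gamma(\|u\|_{\Uc})$ for all $s \in [0,\tau]$. By monotonicity of $\alpha$ together with Lemma~\ref{lem:Kfun_lower_estimates},
\[
\alpha(\|\phi(s,x,u)\|_X) \,\geq\, \alpha\bigl(\eps + \gamma(\|u\|_{\Uc})\bigr) \,\geq\, \tfrac{1}{2}\alpha(\eps) + \tfrac{1}{2}\alpha(\gamma(\|u\|_{\Uc})) \,=\, \tfrac{1}{2}\alpha(\eps) + \sigma(\|u\|_{\Uc}),
\]
so \eqref{DissipationIneq_nc}, combined with shift invariance of $\|\cdot\|_{\Uc}$ and the cocycle property, yields the pointwise Dini estimate $\dot V_u(\phi(s,x,u)) \leq -\tfrac{1}{2}\alpha(\eps)$ on $[0,\tau]$. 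Integrating via the standard Dini comparison lemma gives $V(\phi(\tau,x,u)) \leq V(x) - \tfrac{\tau}{2}\alpha(\eps) \leq \psi_2(r) - \tfrac{\tau}{2}\alpha(\eps) < 0$, which contradicts the strict positivity of $V$ required by \eqref{LyapFunk_1Eig_nc_ISS}. Hence some $t \in [0,\tau]$ with $\|\phi(t,x,u)\|_X \leq \eps + \gamma(\|u\|_{\Uc})$ must exist, proving ULIM with gain $\gamma$.

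The main technical point that deserves care is the integration step, since $V$ is only continuous and $\dot V_u$ is defined merely as an upper Dini derivative. I would resolve this by applying the standard fact that a continuous real-valued function on a compact interval whose upper right Dini derivative is uniformly bounded above by $-c < 0$ decreases at rate at least $c$; the cleanest way to see this here is to observe that $s \mapsto V(\phi(s,x,u)) + \tfrac{s}{2}\alpha(\eps)$ has non-positive upper right Dini derivative and is therefore non-increasing. It is worth emphasising that the non-coercivity of $V$ (the absence of a $\Kinf$ lower bound) plays no role in the argument: only the pointwise positivity $V > 0$ is used. This explains precisely why non-coercive ISS Lyapunov functions suffice to yield ULIM, whereas upgrading ULIM to full ISS via Theorem~\ref{thm:UAG_equals_ULIM_plus_LS} would still require an additional UGS estimate that a coercive lower bound on $V$ would automatically provide.
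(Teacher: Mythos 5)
Your proposal is correct and follows essentially the same route as the paper's proof: the same gain $\gamma=\alpha^{-1}\circ(2\sigma)$, a reach time $\tau$ of the same order $2\psi_2(r)/\alpha(\eps)$ up to an additive constant, the same use of Lemma~\ref{lem:Kfun_lower_estimates} and of shift invariance, and the same contradiction with the upper bound $V\leq\psi_2(\|\cdot\|_X)$ and the positivity of $V$. The only (immaterial) difference is that the paper first integrates \eqref{DissipationIneq_nc} by citing a comparison lemma from \cite{MiW17a} and then derives the contradiction from the resulting integral bound, whereas you keep the pointwise Dini estimate under the contradiction hypothesis and invoke the standard Dini monotonicity lemma to integrate.
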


\begin{proof}
Assume that $V$ is a non-coercive ISS Lyapunov function for $\Sigma$ with corresponding $\psi_2,\alpha,\sigma$.
Integrating \eqref{DissipationIneq_nc} from $0$ to $t$, we obtain using \cite[Lemma 3.4]{MiW17a}:
\begin{multline*}
V(\phi(t,x,u))-V(x) \\ \leq -\int_0^t \alpha(\|\phi(s,x,u)\|_X) ds + \int_0^t\sigma(\|u(\cdot + s)\|_{\Uc}) ds.
\end{multline*}
Due to the axiom of shift invariance $\|u(\cdot + s)\|_{\Uc} \leq
\|u\|_{\Uc}$ for $s\geq 0$ and in view of the previous estimate we have
\begin{eqnarray}
\label{ncISS_Eq1}
\int_0^t \alpha(\|\phi(s,x,u)\|_X) ds 
                                                                &\leq& V(x) + t\sigma(\|u\|_{\Uc})  \nonumber\\
                                                                &\leq& \psi_2(\|x\|_X) + t\sigma(\|u\|_{\Uc}).
\end{eqnarray}
Now define $\gamma(r):=\alpha^{-1} \big(2\sigma(r)\big)$, $r\in\R_+$ and
$\tau(r,\eps):=2(\psi_2(r)+1)(\alpha(\eps))^{-1}$ for any $r,\eps>0$. 

Assume that $\Sigma$ is not ULIM with these $\gamma$ and $\tau$.
Then there are some $\eps>0$, $r>0$, $x\in \overline{B_r}$ and $u\in\Uc$ so that $\|\phi(t,x,u)\|_X >\eps + \gamma(\|u\|_\Uc)$
for all $t\in[0,\tau(r,\eps)]$.

Using Lemma~\ref{lem:Kfun_lower_estimates} we have for these $\eps,x,u$ and all $t\in[0,\tau(r,\eps)]$ that:
\begin{align*}
\int_0^t \alpha(\|\phi(s,x,u)\|_X) ds &\geq \int_0^t \alpha\big(\eps + \gamma(\|u\|_\Uc)\big) ds \\
& \geq \int_0^t \frac{1}{2}\alpha(\eps) + \sigma(\|u\|_\Uc) ds \\
& =  \frac{t}{2}\alpha(\eps) + t\sigma(\|u\|_\Uc) .
\end{align*}
In particular, for $t:=\tau(r,\eps)$ we obtain that 
\begin{eqnarray}
\label{ncISS_Eq2}
\int_0^{\tau(r,\eps)} \alpha(\|\phi(s,x,u)\|_X) ds \geq \psi_2(r)+1 + \tau(r,\eps)\sigma(\|u\|_\Uc).
\end{eqnarray}

Combining estimates \eqref{ncISS_Eq1} and \eqref{ncISS_Eq2}, we see that 
\begin{eqnarray*}
\psi_2(r)+1 \leq  \psi_2(\|x\|_X) \leq \psi_2(r),
\end{eqnarray*}
a contradiction. This shows that  $\Sigma$ is ULIM.
\end{proof}
Using Theorem~\ref{thm:MainResult_Characterization_ISS_EQ_Banach_Spaces},
Proposition~\ref{prop:ncISS_implies_ULIM} and  recent results in the study of non-coercive Lyapunov functions for nonlinear infinite-dimensional systems \cite{MiW17a},
we are able to prove the following result for system \eqref{InfiniteDim}:

\begin{Satz}
\label{thm:ncISS_LF_sufficient_condition}
Let \eqref{InfiniteDim} satisfy Assumption~\ref{Assumption1} and property "$\diamondsuit$" from Theorem~\ref{Characterization_LISS}.
Let \eqref{InfiniteDim} be BRS and assume there exists a non-coercive ISS Lyapunov function for \eqref{InfiniteDim}.
Then \eqref{InfiniteDim} is ISS.
\end{Satz}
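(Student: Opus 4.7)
The plan is to assemble the ISS conclusion from the menu of equivalences that Theorem~\ref{thm:MainResult_Characterization_ISS_EQ_Banach_Spaces} provides. Among the five equivalent formulations listed there, the most economical target is (v): ULIM, 0-ULS, and BRS. We are handed BRS by assumption, and the task reduces to extracting the remaining two ingredients, ULIM and 0-ULS, from the existence of a non-coercive ISS Lyapunov function.

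For ULIM, there is nothing to do beyond citing Proposition~\ref{prop:ncISS_implies_ULIM}: the mere existence of a non-coercive ISS Lyapunov function for a forward-complete system yields the uniform limit property, with an explicit gain $\gamma$ and an explicit $\tau$ built from $\psi_2$, $\alpha$, and $\sigma$. This step uses none of the structural features of \eqref{InfiniteDim} and works in the general setting.

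For 0-ULS, I would restrict the non-coercive ISS Lyapunov function $V$ to the zero input. Evaluating \eqref{DissipationIneq_nc} at $u\equiv 0$ gives $\dot V_0(x)\leq -\alpha(\|x\|_X)$, so $V$ is a non-coercive Lyapunov function for the undisturbed system in the sense of Definition~\ref{def:noncoercive_ISS_LF}. Now I would invoke the second part of Proposition~\ref{prop:Non-coerciveLF_Theorem}, which handles precisely systems of the form \eqref{InfiniteDim} under Assumption~\ref{Assumption1}: for these, non-coercive Lyapunov functions suffice to conclude 0-UGAS without having to verify CEP. Since BRS is in the hypotheses, the proposition applies, and the undisturbed system is 0-UGAS; in particular, it is 0-ULS.

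At this point all three ingredients of statement (v) of Theorem~\ref{thm:MainResult_Characterization_ISS_EQ_Banach_Spaces} are in hand, and the equivalence (v)\,$\Iff$\,(i) — which requires Assumption~\ref{Assumption1} and property "$\diamondsuit$", both assumed here — delivers ISS. The only subtle point is checking that one may bypass the CEP hypothesis of Proposition~\ref{prop:Non-coerciveLF_Theorem}; this is not really an obstacle since the proposition's second sentence is formulated exactly for systems \eqref{InfiniteDim} with Assumption~\ref{Assumption1}, matching our setup. No additional estimates or constructions are needed beyond glueing together Propositions~\ref{prop:ncISS_implies_ULIM} and~\ref{prop:Non-coerciveLF_Theorem} with Theorem~\ref{thm:MainResult_Characterization_ISS_EQ_Banach_Spaces}.
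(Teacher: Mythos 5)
Your proposal is correct and follows essentially the same route as the paper: ULIM from Proposition~\ref{prop:ncISS_implies_ULIM}, 0-ULS from the non-coercive Lyapunov characterization of 0-UGAS for the undisturbed system, and then statement (v) of Theorem~\ref{thm:MainResult_Characterization_ISS_EQ_Banach_Spaces}. The only cosmetic difference is that the paper cites the external result (Corollary~4.7 of \cite{MiW17a}) directly for the 0-UGAS step, whereas you route through the in-paper Proposition~\ref{prop:Non-coerciveLF_Theorem}, which encapsulates the same fact.
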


\begin{proof}
\cite[Corollary 4.7]{MiW17a} ensures that \eqref{InfiniteDim} is 0-UGAS and in particular 0-ULS.
Proposition~\ref{prop:ncISS_implies_ULIM} shows that \eqref{InfiniteDim} is ULIM.
Finally, Theorem~\ref{thm:MainResult_Characterization_ISS_EQ_Banach_Spaces}
ensures that \eqref{InfiniteDim} is ISS.
\end{proof}

For a detailed study of non-coercive Lyapunov functions, their advantages and limitations, we refer to \cite{MiW17a}.

\section{Characterizations of ISS}
\label{ISS_equals_UAG}

The technical results needed to prove
Theorem~\ref{thm:UAG_equals_ULIM_plus_LS} will be divided into 2 parts.
First in Section~\ref{subsec:Stability_BRS} we recall restatements of BRS
and ULS. Next in Section~\ref{subsec:Proof_ISS_theorem} we prove our main technical lemmas.
We assume throughout this section that $\Sigma$ is a forward complete system.

\subsection{Stability and boundedness of reachable sets}
\label{subsec:Stability_BRS}

We start with a standard reformulation of the $\varepsilon$-$\delta$ formulations of stability in terms of ${\cal K}$-functions (the proof is straightforward and thus omitted).
\begin{Lemm}
\label{lem:ULS_restatement}
System $\Sigma$ is ULS if and only if for all $\eps>0$ there
exists a $\delta>0$ such that 
\begin{equation}
\label{LS_Restatement}
\|x\|_X \leq\delta,\ \|u\|_{\Uc} \leq \delta,\ t\geq 0 \;\; \Rightarrow\;\;
\|\phi(t,x,u)\|_X \leq\eps.
\end{equation}
\end{Lemm}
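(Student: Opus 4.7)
The plan is to prove the two implications directly; the equivalence is the standard translation between ${\cal K}$-function and $\varepsilon$-$\delta$ formulations of local stability, so only the converse direction needs any real work.

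For the forward direction ($\Sigma$ ULS $\Rightarrow$ \eqref{LS_Restatement}), I would take the $\sigma\in\Kinf$, $\gamma\in\Kinf\cup\{0\}$ and $r>0$ supplied by the ULS definition, fix $\varepsilon>0$, and exploit continuity of $\sigma$ and $\gamma$ at $0$ to pick a $\delta\in(0,r]$ small enough that $\sigma(\delta)+\gamma(\delta)\leq\varepsilon$. For any $x,u$ with $\|x\|_X\leq\delta$, $\|u\|_{\Uc}\leq\delta$ the ULS estimate yields $\|\phi(t,x,u)\|_X\leq\sigma(\|x\|_X)+\gamma(\|u\|_{\Uc})\leq\sigma(\delta)+\gamma(\delta)\leq\varepsilon$ for all $t\geq 0$.

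For the converse, assume \eqref{LS_Restatement}. Choose $r_0>0$ such that $\|\phi(t,x,u)\|_X\leq 1$ whenever $\|x\|_X,\|u\|_{\Uc}\leq r_0$ and $t\geq 0$, and introduce the reachable-norm function
\[
\mu(s) := \sup\bigl\{\|\phi(t,x,u)\|_X \midset \|x\|_X\leq s,\ \|u\|_{\Uc}\leq s,\ t\geq 0\bigr\},\quad s\in[0,r_0].
\]
Condition \eqref{LS_Restatement} is exactly what guarantees $\mu(s)<\infty$ on $[0,r_0]$ and $\mu(s)\to 0$ as $s\to 0^+$. By construction $\mu$ is nondecreasing on $[0,r_0]$, so the elementary bound $\mu(\max(a,b))\leq \mu(a)+\mu(b)$ holds. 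Majorizing $\mu$ on $[0,r_0]$ by some $\tilde\sigma\in\Kinf$, one reads off, for all $\|x\|_X,\|u\|_{\Uc}\leq r_0$ and all $t\geq 0$,
\[
\|\phi(t,x,u)\|_X \leq \mu\bigl(\max(\|x\|_X,\|u\|_{\Uc})\bigr) \leq \tilde\sigma(\|x\|_X) + \tilde\sigma(\|u\|_{\Uc}),
\]
which is the ULS estimate with $\sigma=\gamma=\tilde\sigma$ and $r=r_0$.

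The only nontrivial step is the construction of the $\Kinf$ majorant $\tilde\sigma$ of $\mu$, since $\mu$ itself is only guaranteed to be nondecreasing and to vanish at $0$, not to be continuous or strictly increasing. The standard remedy is to dominate $\mu$ by a continuous, strictly increasing function tending to $0$ at $0$ (for example, by smoothing via a running average and then adding a small term proportional to the identity), and to extend the result past $r_0$ by a line of positive slope to obtain an element of $\Kinf$. This is the main (and only mild) obstacle, and is a standard comparison-function trick used throughout ISS theory.
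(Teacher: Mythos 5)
Your proof is correct, and it follows exactly the standard $\eps$-$\delta$ versus comparison-function translation that the paper itself invokes when it omits the proof as straightforward: the forward direction by continuity of $\sigma,\gamma$ at $0$, and the converse by majorizing the nondecreasing reachable-norm function $\mu$ with a $\K_\infty$ upper bound. The construction of the $\K_\infty$ majorant via a running average plus a linear term is the same device the authors use elsewhere (e.g.\ for $\bar\tau$ in Proposition~\ref{prop:ULIM_plus_mildRFC_implies_pUGS}), so no gap remains.
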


%
%

Lemma~\ref{lem:ULS_restatement} can be interpreted as follows:
the system $\Sigma$ is ULS if and only if $\phi$ is continuous at the equilibrium $0$
and the function $\delta$ in
Definition~\ref{def:RobustEquilibrium_Undisturbed} is independent of
the time $h$.

It is useful to have a restatement of the BRS property in a comparison-functions-like manner.

We call a function $h: \R_+^3 \to \R_+$ increasing, if $(r_1,r_2,r_3) \leq (R_1,R_2,R_3)$
implies that $h(r_1,r_2,r_3) \leq h(R_1,R_2,R_3)$, where we use the component-wise
partial order on $\R_+^3$. We call $h$ strictly increasing if $(r_1,r_2,r_3)
\leq (R_1,R_2,R_3)$ and $(r_1,r_2,r_3) \neq (R_1,R_2,R_3)$ imply $h(r_1,r_2,r_3) <
h(R_1,R_2,R_3)$.

\begin{Lemm}
\label{lem:Boundedness_Reachability_Sets_criterion}
Consider a forward complete system $\Sigma$. The following statements are equivalent:
\begin{enumerate}
    \item[(i)] $\Sigma$ has bounded reachability sets.
    \item[(ii)] There exists a continuous, increasing function $\mu: \R_+^3 \to \R_+$, such that for
all $x\in X, u\in \Uc$ and all $t \geq 0$ we have
 \begin{equation}
    \label{eq:8_ISS}
    \| \phi(t,x,u) \|_X \leq \mu( \|x\|_X,\|u\|_{\Uc},t).
\end{equation}    
    \item[(iii)] There exists a continuous function $\mu: \R_+^3 \to \R_+$ such that for
all $x\in X, u\in \Uc$ and all $t \geq 0$ the inequality \eqref{eq:8_ISS} holds.
\end{enumerate}
\end{Lemm}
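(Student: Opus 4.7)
The plan is to establish the chain (ii) $\Rightarrow$ (iii) $\Rightarrow$ (i) $\Rightarrow$ (ii). The implication (ii) $\Rightarrow$ (iii) is immediate, since (iii) merely drops the monotonicity requirement. For (iii) $\Rightarrow$ (i), fix $C,\tau>0$; the continuous function $\mu$ attains a finite maximum $M$ on the compact box $[0,C]\times[0,C]\times[0,\tau]$, and \eqref{eq:8_ISS} then yields $\|\phi(t,x,u)\|_X \leq M$ uniformly over $\|x\|_X\leq C$, $\|u\|_{\Uc}\leq C$, $t\in[0,\tau]$, which is exactly BRS.

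The substantive implication is (i) $\Rightarrow$ (ii). The natural candidate majorant is
\begin{equation*}
\tilde\mu(r_1,r_2,r_3) := \sup\{\|\phi(t,x,u)\|_X \midset \|x\|_X\leq r_1,\ \|u\|_{\Uc}\leq r_2,\ t\in[0,r_3]\}.
\end{equation*}
Applying BRS with $C:=\max\{r_1,r_2\}$ and $\tau:=r_3$ shows this supremum is finite on all of $\R_+^3$, it is increasing in each argument by construction, and by definition $\|\phi(t,x,u)\|_X \leq \tilde\mu(\|x\|_X,\|u\|_{\Uc},t)$. The only remaining task is to replace $\tilde\mu$ by a continuous increasing majorant.

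To regularize, I would define
\begin{equation*}
\mu(r_1,r_2,r_3) := \int_{r_1}^{r_1+1}\!\int_{r_2}^{r_2+1}\!\int_{r_3}^{r_3+1} \tilde\mu(s_1,s_2,s_3)\, ds_1\, ds_2\, ds_3.
\end{equation*}
The integral is well-defined because a componentwise-increasing function has downward-closed (hence Borel) sub-level sets and is locally bounded. Monotonicity of $\tilde\mu$ gives both $\mu(r_1,r_2,r_3)\geq \tilde\mu(r_1,r_2,r_3)$ (so \eqref{eq:8_ISS} transfers to $\mu$) and monotonicity of $\mu$ in each argument. Continuity of $\mu$ reduces, after the substitution $s_i \mapsto s_i-r_i$ on each factor, to the standard fact that translating a box of integration by $h$ changes the integral of a locally bounded function by $O(h)$.

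I expect the only real friction to lie in the bookkeeping for the regularization step --- measurability of $\tilde\mu$ and joint continuity of $\mu$ --- both of which are essentially routine once one observes that $\tilde\mu$ is componentwise monotone and locally bounded. The conceptual content of the lemma is entirely in the (i) $\Rightarrow$ (ii) direction, where BRS is repackaged as the pointwise bound $\tilde\mu$ and then smoothed by averaging.
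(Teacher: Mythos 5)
Your argument is correct and is essentially the intended one: the paper omits its own proof, referring to \cite[Lemma 2.12]{MiW17a}, which follows the same strategy of defining the supremum function $\tilde\mu$ and then producing a continuous increasing majorant; your averaging over a shifted unit cube is a standard way to carry out that regularization, and your verifications of monotonicity, the majorization $\mu\geq\tilde\mu$, and local Lipschitz continuity of $\mu$ (via the $O(h)$ bound on the measure of the symmetric difference of the two boxes, together with local boundedness of $\tilde\mu$) all go through. One small inaccuracy: a downward-closed subset of $\R_+^3$ need not be Borel (its boundary can contain an arbitrary, possibly non-Borel, subset of an antichain), so the parenthetical ``hence Borel'' is wrong as stated; what is true, and all you need, is that the boundary of a lower set meets each line in the direction $(1,1,1)$ in at most one point and therefore has Lebesgue measure zero, so the sublevel sets of $\tilde\mu$ are Lebesgue measurable and the integral defining $\mu$ is well defined. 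With that correction the proof is complete.
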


The proof is analogous to the proof of \cite[Lemma 2.12]{MiW17a} and
is omitted.

Finally, we recall \cite[Lemma I.2, p. 1285]{SoW96}, which was shown for ODEs, but is proved in the same way for $\Sigma$.
\begin{Lemm}
\label{lem:LS_plus_pUGS_equals_GS}
Consider a forward complete system $\Sigma$. Then
$\Sigma$ is ULS and UGB if and only if it is UGS.
\end{Lemm}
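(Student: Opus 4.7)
The forward direction, UGS $\Rightarrow$ ULS $\wedge$ UGB, is essentially immediate: restricting the UGS estimate to small balls gives ULS (with the same $\sigma,\gamma$ and any $r>0$), while UGB follows from UGS by simply taking $c=0$. So the substantive content is the converse implication, and this is where I would focus.

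For ULS $\wedge$ UGB $\Rightarrow$ UGS, the obstacle is precisely the additive constant $c>0$ appearing in the UGB bound
\[
\|\phi(t,x,u)\|_X \le \sigma_2(\|x\|_X) + \gamma_2(\|u\|_{\Uc}) + c,
\]
which is not present in the UGS estimate. The plan is to absorb this $c$ into enlarged $\Kinf$ comparison functions, exploiting the fact that ULS already gives a sharp (constant-free) bound on the small region. Concretely, let $\sigma_1,\gamma_1 \in \Kinf \cup \{0\}$ and $r>0$ be the data of ULS, and let $\sigma_2,\gamma_2 \in \Kinf \cup \{0\}$ and $c>0$ be the data of UGB. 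Pick any $\chi_1,\chi_2 \in \Kinf$ with $\chi_1(r) \ge c$ and $\chi_2(r) \ge c$, and set
\[
\sigma := \sigma_1 + \sigma_2 + \chi_1, \qquad \gamma := \gamma_1 + \gamma_2 + \chi_2,
\]
both elements of $\Kinf$ (or $\{0\}$, in degenerate cases where one may drop the corresponding $\chi_i$).

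The verification then proceeds by a clean two-case split on $(\|x\|_X,\|u\|_{\Uc})$. If both $\|x\|_X \le r$ and $\|u\|_{\Uc} \le r$, ULS directly gives $\|\phi(t,x,u)\|_X \le \sigma_1(\|x\|_X) + \gamma_1(\|u\|_{\Uc}) \le \sigma(\|x\|_X) + \gamma(\|u\|_{\Uc})$. Otherwise at least one of the two norms exceeds $r$; without loss of generality assume $\|x\|_X > r$, so that $\chi_1(\|x\|_X) \ge \chi_1(r) \ge c$, giving $\sigma(\|x\|_X) \ge \sigma_2(\|x\|_X) + c$ and $\gamma(\|u\|_{\Uc}) \ge \gamma_2(\|u\|_{\Uc})$, hence the UGB estimate is dominated by $\sigma(\|x\|_X) + \gamma(\|u\|_{\Uc})$. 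The symmetric case $\|u\|_{\Uc}>r$ is handled identically using $\chi_2$.

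The main (and only mild) obstacle is really just the bookkeeping of making sure $\sigma,\gamma$ live in $\Kinf \cup \{0\}$; the nontrivial analytic content is the elementary observation that a $\Kinf$ function can be chosen to exceed $c$ at any prescribed positive argument, which makes the additive constant disappear outside the small region where ULS already suffices. No new ideas beyond this bootstrapping trick are needed, and the result mirrors the standard finite-dimensional argument in \cite[Lemma I.2]{SoW96}.
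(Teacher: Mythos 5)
Your proof is correct and is essentially the standard constant-absorption argument of \cite[Lemma I.2]{SoW96}, which the paper cites for this lemma without reproducing the details. One small caveat: the parenthetical about dropping $\chi_2$ in degenerate cases is not quite right (when $\|x\|_X\le r$ but $\|u\|_{\Uc}>r$ the constant $c$ can only be absorbed into $\gamma$, so $\chi_2$ must be kept even if $\gamma_1=\gamma_2=0$), but your default construction with both $\chi_1,\chi_2\in\Kinf$ retained already handles every case.
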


%
%
%


\subsection{Proof of Theorem~\ref{thm:UAG_equals_ULIM_plus_LS}}
\label{subsec:Proof_ISS_theorem}

We start with a simple lemma:
\begin{Lemm}
\label{ISS_implies_UAG}
If $\Sigma$ is ISS, then it is UAG.
\end{Lemm}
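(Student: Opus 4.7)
The implication is essentially an unpacking of the comparison-function form of ISS, so the plan is short and direct. Starting from ISS, I obtain $\beta\in\KL$ and $\gamma\in\K$ with
\begin{equation*}
\|\phi(t,x,u)\|_X \leq \beta(\|x\|_X,t) + \gamma(\|u\|_\Uc)
\end{equation*}
for all $x\in X$, $u\in\Uc$, $t\geq 0$. To prove UAG, I fix $\eps>0$ and $r>0$ and must produce a $\tau=\tau(\eps,r)<\infty$ independent of $x\in B_r$ and $u\in\Uc$ with the UAG estimate.

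\textbf{Key step.} Since $\beta(r,\cdot)\in\LL$, we have $\beta(r,t)\to 0$ as $t\to\infty$, so there exists $\tau=\tau(\eps,r)$ such that $\beta(r,\tau)\leq\eps$. For any $x\in B_r$ (so $\|x\|_X<r$), any $u\in\Uc$, and any $t\geq\tau$, monotonicity of $\beta$ in both arguments yields $\beta(\|x\|_X,t)\leq\beta(r,t)\leq\beta(r,\tau)\leq\eps$, and hence
\begin{equation*}
\|\phi(t,x,u)\|_X \leq \beta(\|x\|_X,t) + \gamma(\|u\|_\Uc) \leq \eps + \gamma(\|u\|_\Uc),
\end{equation*}
which is exactly the UAG bound with the prescribed $\tau(\eps,r)$.

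\textbf{Minor technicality.} The UAG definition requires the gain to lie in $\Kinf\cup\{0\}$, whereas the ISS definition only guarantees $\gamma\in\K$. If $\gamma$ is bounded, I simply replace it by any majorizing $\tilde\gamma\in\Kinf$ (for instance $\tilde\gamma(s):=\gamma(s)+s$), which preserves the estimate. There is no real obstacle here; the whole argument is a one-paragraph consequence of the definitions and the monotonicity properties of $\KL$-functions.
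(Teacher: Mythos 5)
Your proof is correct and follows essentially the same route as the paper: pick $\tau=\tau(\eps,r)$ with $\beta(r,\tau)\leq\eps$ using that $\beta(r,\cdot)\in\LL$, then use monotonicity of $\beta$ in both arguments to conclude. The remark about upgrading the gain from $\K$ to $\Kinf\cup\{0\}$ is a harmless technicality the paper passes over silently, and your fix is fine.
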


\begin{proof}
Let $\Sigma$ be ISS with the corresponding $\beta\in\KL$ and $\gamma\in\Kinf$. 
Take arbitrary $\eps, r >0$.  Define $\tau=\tau(\eps,r)$ as
the solution of the equation $\beta(r,\tau)=\eps$ (if it exists,
then it is unique, because of monotonicity of $\beta$ in the second
argument, if it does not exist, we set $\tau(\eps,r):=0$). Then for
all $t \geq \tau$, all $x\in X$ with $\|x\|_X \leq r$ and all $u \in \Uc$ we have
\begin{eqnarray*}
\|\phi(t,x,u)\|_X &\leq& \beta(\|x\|_X,t) + \gamma(\|u\|_{\Uc})  \\
&\leq& \beta(\|x\|_X,\tau) + \gamma(\|u\|_{\Uc}) \\
&\leq& \eps + \gamma(\|u\|_{\Uc}),
\end{eqnarray*}
and the estimate \eqref{UAG_Absch} holds.
\end{proof}

\begin{Lemm}
\label{UAG-ULS}
If $\Sigma$ is UAG and CEP, then it is ULS.
\end{Lemm}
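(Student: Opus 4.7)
The plan is to exploit the restatement of ULS from Lemma~\ref{lem:ULS_restatement}: it suffices to show that for every $\eps>0$ there is some $\delta>0$ such that $\|x\|_X\leq\delta$ and $\|u\|_{\Uc}\leq\delta$ imply $\|\phi(t,x,u)\|_X\leq\eps$ for all $t\geq 0$. The idea is a standard split of the time axis into a bounded initial interval $[0,\tau]$, on which CEP controls the trajectory, and a tail $[\tau,\infty)$, on which UAG takes over.

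Fix $\eps>0$, and let $\gamma\in\Kinf\cup\{0\}$ denote the gain from the UAG property. First, apply UAG with tolerance $\eps/2$ and, say, radius $r=1$ to obtain a uniform time $\tau=\tau(\eps/2,1)$ such that
\[
\|x\|_X\leq 1,\ u\in\Uc,\ t\geq\tau \ \Rightarrow\ \|\phi(t,x,u)\|_X\leq \tfrac{\eps}{2}+\gamma(\|u\|_{\Uc}).
\]
Next, since $\gamma$ is continuous at $0$ with $\gamma(0)=0$ (or $\gamma\equiv 0$), choose $\delta_1\in(0,1]$ small enough that $\gamma(\delta_1)\leq\eps/2$. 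Then for all $x,u$ with $\|x\|_X\leq\delta_1$, $\|u\|_{\Uc}\leq\delta_1$ and all $t\geq\tau$ we have
\[
\|\phi(t,x,u)\|_X \leq \tfrac{\eps}{2}+\gamma(\delta_1)\leq \eps.
\]

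To handle the interval $[0,\tau]$, invoke CEP with parameters $\eps$ and $h=\tau$: this yields $\delta_2=\delta(\eps,\tau)>0$ such that $\|x\|_X\leq\delta_2$, $\|u\|_{\Uc}\leq\delta_2$ and $t\in[0,\tau]$ force $\|\phi(t,x,u)\|_X\leq\eps$. Setting $\delta:=\min\{\delta_1,\delta_2\}$ and combining the two regimes gives $\|\phi(t,x,u)\|_X\leq\eps$ for all $t\geq 0$ whenever $\|x\|_X,\|u\|_{\Uc}\leq\delta$. By Lemma~\ref{lem:ULS_restatement} this means $\Sigma$ is ULS.

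There is essentially no deep obstacle in this argument; the only subtlety is that UAG by itself does not bound $\phi$ on $[0,\tau]$, so without CEP one cannot glue the two estimates together. This is precisely why the CEP hypothesis appears in the statement, and in infinite dimensions it cannot be dropped for free as it can in the ODE setting (where local compactness and continuity on $[0,\tau]$ yield CEP automatically).
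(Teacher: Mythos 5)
Your proof is correct and follows essentially the same route as the paper's: apply UAG with tolerance $\eps/2$ and radius $1$ to control the tail $t\geq\tau$, shrink the input norm so that the gain term is at most $\eps/2$, and use CEP on $[0,\tau]$ to handle the initial interval, concluding via Lemma~\ref{lem:ULS_restatement}. The only cosmetic difference is that the paper takes $\delta:=\min\{1,\delta_1,\delta_2\}$ while you absorb the constraint $\|x\|_X\leq 1$ into the choice $\delta_1\in(0,1]$.
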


\begin{proof} 
    We will show that \eqref{LS_Restatement} holds so that the claim
    follows from Lemma~\ref{lem:ULS_restatement}. Let $\tau$ and $\gamma$ be the
    functions given by \eqref{UAG_Absch}. Let $\eps>0$ and
    $\tau:=\tau(\eps/2,1)$.
    Pick any $\delta_1>0$ so that
    $\gamma(\delta_1)<\eps/2$. Then for all $x \in X$ with $\|x\|_X \leq 1$
    and all $u \in \Uc$ with $\|u\|_{\Uc} \leq\delta_1$ we have
\begin{equation}\label{eq:2}
\sup_{t\geq \tau}\|\phi(t,x,u)\|_X \leq\frac{\eps}{2}+\gamma(\|u\|_{\Uc})<\eps.
\end{equation}

Since $\Sigma$ is CEP, there is some $\delta_2=\delta_2(\eps,\tau)>0$ so that 
\[
\|\eta\|_X \leq\delta_2 \ \wedge \  \|u\|_{\Uc} \leq \delta_2 \quad \Rightarrow \quad \sup_{t\in[0,\tau]}\|\phi(t,\eta,u)\|_X \leq\eps.
\]
Together with \eqref{eq:2}, this proves  \eqref{LS_Restatement} with $\delta:=\min\lbrace 1,\delta_1,\delta_2 \rbrace$.
\end{proof}

We proceed with
\begin{proposition}
\label{prop:ULIM_plus_mildRFC_implies_pUGS}
Assume that $\Sigma$ is BRS and has the uniform limit property. Then $\Sigma$ is UGB.
\end{proposition}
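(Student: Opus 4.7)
The plan is to combine the ULIM property with the BRS property iteratively via the cocycle: ULIM provides a sequence of times at which the trajectory dips below a fixed level depending only on $\|u\|_\Uc$, while BRS controls the excursions between these times. Let $\gamma\in\K\cup\{0\}$ be the gain from ULIM, and let $\mu:\R_+^3\to\R_+$ be the continuous increasing majorant of $\phi$ provided by Lemma~\ref{lem:Boundedness_Reachability_Sets_criterion}.

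Fix $x \in X$, $u \in \Uc$, and set $r := \|x\|_X$ and $R := 1 + \gamma(\|u\|_\Uc)$. Applying ULIM with $\eps=1$, choose $\tau_1 := \tau(1,r)$ and $\tau_2 := \tau(1,R)$; without loss of generality both are monotone in their second argument. First I would use ULIM to pick $t_0 \in [0,\tau_1]$ with $\|\phi(t_0,x,u)\|_X \leq R$. Then I would construct inductively a sequence $t_0 < t_1 < t_2 < \cdots$ with $t_{k+1}-t_k \leq \tau_2$ and $\|\phi(t_k,x,u)\|_X \leq R$: given $t_k$, apply ULIM to the state $\phi(t_k,x,u)\in \overline{B_R}$ and the shifted input $u(\cdot+t_k)\in\Uc$, which has norm $\leq\|u\|_\Uc$ by shift invariance; this yields $s_{k+1}\leq\tau_2$ with
\[
\|\phi(s_{k+1},\phi(t_k,x,u),u(\cdot+t_k))\|_X \leq 1+\gamma(\|u(\cdot+t_k)\|_\Uc) \leq R,
\]
and the cocycle property gives $\|\phi(t_{k+1},x,u)\|_X\leq R$ for $t_{k+1}:=t_k+s_{k+1}$.

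Next I would estimate $\phi$ on each subinterval. On $[0,t_0]\subseteq[0,\tau_1]$, BRS directly yields $\|\phi(t,x,u)\|_X\leq \mu(r,\|u\|_\Uc,\tau_1)$. On $[t_k,t_{k+1}]$ the cocycle property and BRS give
\[
\|\phi(t,x,u)\|_X \leq \mu(R,\|u\|_\Uc,\tau_2),
\]
uniformly in $k$. Combining these bounds, for all $t\geq 0$,
\[
\|\phi(t,x,u)\|_X \leq \mu(r,\|u\|_\Uc,\tau_1) + \mu(R,\|u\|_\Uc,\tau_2).
\]
Since $r=\|x\|_X$, $R=1+\gamma(\|u\|_\Uc)$, $\tau_1=\tau(1,\|x\|_X)$, $\tau_2=\tau(1,1+\gamma(\|u\|_\Uc))$, the right-hand side is an increasing function of $(\|x\|_X,\|u\|_\Uc)$ only, and an elementary argument (e.g.\ $F(a,b)\leq F(a+b,a+b)$ together with majorization of a continuous nondecreasing real-valued function by a $\Kinf$ function plus a constant) produces $\sigma\in\Kinf$, $\tilde\gamma\in\Kinf\cup\{0\}$ and $c>0$ giving the UGB estimate~\eqref{pGSAbschaetzung}.

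The main obstacle I anticipate is the inductive construction: one must use ULIM repeatedly on shifted inputs, which is exactly where the shift-invariance axiom and the fact that the ULIM gain $\gamma$ is applied to the full $\Uc$-norm are crucial. Without shift invariance of $\|\cdot\|_\Uc$ the level $R$ would increase with each iteration, destroying uniformity; with it, $R$ is preserved and BRS can be applied with the \emph{same} time horizon $\tau_2$ on every excursion. A mild bookkeeping issue is to make $\tau(\eps,r)$ monotone in $r$ (take a non-decreasing majorant), which causes no loss of generality.
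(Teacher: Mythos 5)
Your overall strategy --- use ULIM to locate recurrent returns of the trajectory to a ball of radius $R=1+\gamma(\|u\|_\Uc)$ and use BRS to control the excursions in between --- is the same idea that drives the paper's proof, and your use of shift invariance, the cocycle property and Lemma~\ref{lem:Boundedness_Reachability_Sets_criterion} is correct. The problem lies in the inductive forward construction of the return times $t_k$. ULIM only asserts the \emph{existence} of some $s_{k+1}\in[0,\tau_2]$ with $\|\phi(s_{k+1},\phi(t_k,x,u),u(\cdot+t_k))\|_X\leq R$; since $\phi(t_k,x,u)$ already lies in $\overline{B_R}$, the choice $s_{k+1}=0$ is always admissible, and in any case there is no positive lower bound on $s_{k+1}$. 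Hence the sequence $t_{k+1}=t_k+s_{k+1}$ need not be strictly increasing and, even if it is, may accumulate at a finite time $t_\infty$. Your intervals $[t_k,t_{k+1}]$ then fail to cover $[t_0,\infty)$, and the concluding estimate ``for all $t\geq 0$'' does not follow: nothing in the argument controls $\|\phi(t,x,u)\|_X$ for $t>t_\infty$.

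The paper avoids this by arguing backwards from an arbitrary fixed time $t$: it supposes $\|\phi(t,x,u)\|_X$ exceeds the candidate bound $\tilde\sigma(r)$, sets $t_m:=\sup\{s\in[0,t]:\|\phi(s,x,u)\|_X\leq r\}$ (the \emph{last} time before $t$ at which the trajectory is in the reference ball), and shows by a dichotomy that $t-t_m$ cannot exceed the ULIM horizon $\tau(r)$ --- for otherwise ULIM applied at $\phi(t_m,x,u)$ would produce a return to the strictly smaller ball $\overline{B_{3r/4}}$ at a time in $(t_m,t)$, contradicting the maximality of $t_m$ --- so that BRS on $[t_m,t]$ yields the bound. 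The essential ingredient you are missing is precisely this ``last exit time'' device, which converts ULIM into a statement about the maximal length of a single excursion outside the ball, independently of how the visiting times are distributed in $[0,\infty)$. To repair your proof you would need to replace the forward induction by this backward argument (equivalently, show that the set $\{s\geq 0:\|\phi(s,x,u)\|_X\leq R\}$ has no gaps longer than a fixed horizon depending only on $R$ and $\|u\|_\Uc$), at which point you have essentially reproduced the paper's proof.
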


\begin{proof}
Pick any $r>0$ and set $\eps:=\frac{r}{2}$.
Since $\Sigma$ has the uniform limit property, there exists a $\tau=\tau(r)$ so that 
\begin{eqnarray}
x \in\overline{B_r},\, u\in\Uc \ \Rightarrow \ \exists t\leq \tau:\ \|\phi(t,x,u)\|_X \leq \frac{r}{2} +\gamma(\|u\|_{\Uc}).
\label{eq:ULIM_pGS_ISS_Section1}
\end{eqnarray}
In particular, if $x \in\overline{B_r},\, \|u\|_{\Uc} \leq
\gamma^{-1}(\tfrac{r}{4})$ then there exists a $t\leq \tau$ such that 
\begin{eqnarray}
 \|\phi(t,x,u)\|_X \leq \frac{3r}{4}.
\label{eq:ULIM_pGS_ISS_Section2}
\end{eqnarray}
Without loss of generality we can assume that $\tau$ is increasing in $r$, in particular, it is locally integrable.
Defining $\bar\tau(r):=\frac{1}{r}\int_r^{2r}\tau(s)ds$ we see that $\bar\tau(r) \geq \tau(r)$ and $\bar\tau$ is continuous.

Since $\Sigma$ is BRS, Lemma~\ref{lem:Boundedness_Reachability_Sets_criterion} implies that there exists a continuous, increasing function $\mu: \R_+^3 \to \R_+$, such that for
all $x\in X, u\in \Uc$ and all $t \geq 0$ estimate \eqref{eq:8_ISS} holds. This implies that
\begin{eqnarray}
x\in \overline{B_r},\, \|u\|_{\Uc} \leq \gamma^{-1}(\tfrac{r}{4}),\, t \leq \tau(r) \ \Rightarrow\ \|\phi(t,x,u)\|_X \leq \tilde\sigma(r),
\label{eq:ULIM_pGS_ISS_Section3}
\end{eqnarray}
where $\tilde\sigma:r\mapsto \mu\big(r,\gamma^{-1}(\tfrac{r}{4}),\tau(r)\big)$
is continuous and increasing, since $\mu,\gamma,\tau$ are continuous increasing functions.
  Also from \eqref{eq:ULIM_pGS_ISS_Section2} and \eqref{eq:ULIM_pGS_ISS_Section3} it is clear that $\tilde\sigma(r) \geq \frac{3r}{4}$ for any $r>0$.

Assume that there exist  $x\in \overline{B_r}$, $u\in\Uc$ with $\|u\|_{\Uc} \leq \gamma^{-1}(\tfrac{r}{4})$ and $t\geq0$ so that $\|\phi(t,x,u)\|_X  > \tilde \sigma(r)$. Define 
\[
t_m:=\sup\{s \in[0,t]: \|\phi(s,x,u)\|_X \leq r\}\geq 0.
\] 
The quantity $t_m$ is well-defined, since $\|\phi(0,x,u)\|_X = \|x\|_X
\leq r$ due to the identity property ($\Sigma$\ref{axiom:Identity}).
 
In view of the cocycle property ($\Sigma$\ref{axiom:Cocycle}), it holds that
\[
\phi(t,x,u) = \phi\big(t-t_m,\phi(t_m,x,u),u(\cdot + t_m)\big),
\]
and $u(\cdot + t_m) \in\Uc$, since $\Sigma$ satisfies the axiom of shift invariance. 
Assume that $t-t_m \leq\tau(r)$. Since $\|\phi(t_m,x,u)\|_X\leq r$,
\eqref{eq:ULIM_pGS_ISS_Section3} implies that $\|\phi(t,x,u)\|_X \leq \tilde \sigma(r)$ for all $t \in [t_m,t]$.
Otherwise, if $t-t_m >\tau(r)$, then due to \eqref{eq:ULIM_pGS_ISS_Section2} there exists $t^* < \tau(r)$, so that 
\[
\big\|\phi\big(t^*,\phi(t_m,x,u),u(\cdot + t_m)\big)\big\|_X = \|\phi(t^*+t_m,x,u)\|_X \leq \frac{3r}{4},
\] 
which contradicts the definition of $t_m$, since $t_m+t^*<t$.
Hence
\begin{eqnarray}
x \in\overline{B_r},\, \|u\|_{\Uc} \leq \gamma^{-1}(\tfrac{r}{4}),\, t\geq
0 \ \Rightarrow \ 
  \|\phi(t,x,u)\|_X \leq \tilde\sigma(r).
\label{eq:ULIM_pGS_ISS_Section4}
\end{eqnarray}
Denote $\sigma(r):=\tilde\sigma(r) - \tilde\sigma(0)$, for any $r\geq 0$. Clearly, $\sigma\in\Kinf$.

For each $x\in X$, $u\in\Uc$ define $r:=\max\{\|x\|_X,4\gamma(\|u\|_{\Uc})\}$.
Then \eqref{eq:ULIM_pGS_ISS_Section4} immediately shows for all $x\in X,\ u\in\Uc,\ t\geq 0$ that
\begin{eqnarray*}
\|\phi(t,x,u)\|_X &\leq& \sigma\big(\max\{\|x\|_X,4\gamma(\|u\|_{\Uc})\}\big) + \tilde\sigma(0) \\
&\leq& \sigma(\|x\|_X) + \sigma\big(4\gamma(\|u\|_{\Uc})\big) + \tilde\sigma(0),
\end{eqnarray*}
which shows UGB of $\Sigma$.
\end{proof}

\begin{Lemm}
\label{lem:ULIM_plus_GS_implies_UAG}
If $\Sigma$ is ULIM and UGS, then $\Sigma$ is UAG.
\end{Lemm}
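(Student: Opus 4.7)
The plan is to combine ULIM and UGS via the cocycle property: first use ULIM to bring the trajectory into a thin tube around $\gamma_2(\|u\|_\Uc)$ within a uniformly bounded time, and then let UGS carry the small remainder forward for all later times. The uniformity in the initial ball comes from ULIM, and the smallness of the overshoot for $t\geq t_1$ comes from choosing $\eta$ such that $\sigma(2\eta)$ is arbitrarily small, where $\sigma$ is the UGS $\Kinf$-function.

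Concretely, let $\sigma\in\Kinf$ and $\gamma_1\in\Kinf\cup\{0\}$ be the UGS functions, and let $\gamma_2\in\K\cup\{0\}$ be the ULIM gain. Define a candidate UAG gain by $\gamma(s):=\sigma(2\gamma_2(s))+\gamma_1(s)$, enlarging it if necessary to ensure $\gamma\in\Kinf\cup\{0\}$. Given $\eps,r>0$, choose $\eta=\eta(\eps)>0$ so that $\sigma(2\eta)\leq\eps/2$, and set $\tau(\eps,r):=\tau_{\mathrm{ULIM}}(\eta,r)$. By ULIM, for every $x\in\overline{B_r}$ and every $u\in\Uc$ there exists $t_1=t_1(x,u)\leq \tau(\eps,r)$ with
\[
\|\phi(t_1,x,u)\|_X \leq \eta+\gamma_2(\|u\|_\Uc).
\]

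For any $t\geq \tau(\eps,r)\geq t_1$, apply the cocycle property to write $\phi(t,x,u)=\phi(t-t_1,\phi(t_1,x,u),u(\cdot+t_1))$, and then apply UGS to this new initial condition. Combining with the axiom of shift invariance $\|u(\cdot+t_1)\|_\Uc\leq\|u\|_\Uc$ yields
\[
\|\phi(t,x,u)\|_X \leq \sigma\bigl(\eta+\gamma_2(\|u\|_\Uc)\bigr)+\gamma_1(\|u\|_\Uc).
\]
Now split into cases: if $\gamma_2(\|u\|_\Uc)\leq\eta$, then $\sigma(\eta+\gamma_2(\|u\|_\Uc))\leq \sigma(2\eta)\leq\eps/2$, giving $\|\phi(t,x,u)\|_X\leq \eps/2+\gamma_1(\|u\|_\Uc)\leq\eps+\gamma(\|u\|_\Uc)$; if $\gamma_2(\|u\|_\Uc)>\eta$, then $\sigma(\eta+\gamma_2(\|u\|_\Uc))\leq \sigma(2\gamma_2(\|u\|_\Uc))$, so the estimate reduces to $\|\phi(t,x,u)\|_X\leq \gamma(\|u\|_\Uc)\leq \eps+\gamma(\|u\|_\Uc)$.

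This establishes \eqref{UAG_Absch} for $\tau=\tau(\eps,r)$ with the prescribed $\gamma$, which is independent of $\eps$ and $r$, so $\Sigma$ is UAG. The only subtlety is the case split on $\|u\|_\Uc$: the point is that ULIM alone does not produce a uniform tube of the right size when $\|u\|_\Uc$ is large, but the UGS estimate applied after time $t_1$ absorbs both the small residual $\eta$ (controlled by $\sigma(2\eta)$) and the input contribution (controlled by $\sigma(2\gamma_2(\|u\|_\Uc))+\gamma_1(\|u\|_\Uc)$), which is precisely the definition of $\gamma$. No iteration is required because UGS is already global in the state.
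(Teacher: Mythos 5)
Your proof is correct and follows essentially the same route as the paper: use ULIM to reach the tube of radius $\eta+\gamma_2(\|u\|_\Uc)$ within a uniform time, then propagate forward with UGS via the cocycle property and shift invariance. The only cosmetic difference is that the paper absorbs the term $\sigma\bigl(\eta+\gamma_2(\|u\|_\Uc)\bigr)$ using the inequality $\sigma(a+b)\leq\sigma(2a)+\sigma(2b)$ where you use a case split on whether $\gamma_2(\|u\|_\Uc)\leq\eta$; both yield the same gain $\sigma(2\gamma_2(s))+\gamma_1(s)$.
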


\begin{proof}
Without loss of generality assume that $\gamma$ in the definitions of ULIM
and UGS is the same (otherwise take the maximum of the two).

Pick any $\eps>0$ and any $r>0$. By the uniform limit property, there exists $\gamma\in\Kinf$, independent of $\eps$ and $r$, 
and $\tau=\tau(\eps,r)$ so that for any $x\in \overline{B_r}$,
$u\in \overline{B_{r,\Uc}}$ there exists a $t\leq \tau$ so that
$\|\phi(t,x,u)\|_X \leq \eps + \gamma(\|u\|_{\Uc})$.

In view of the cocycle property, we have from the UGS property that for the above $x,u,t$ and any $s\geq 0$
\begin{eqnarray*}
\|\phi(s+t,x,u)\|_X &=& \|\phi(s,\phi(t,x,u),u(s+\cdot))\|_X \\
            &\leq& \sigma(\|\phi(t,x,u)\|_X) + \gamma(\|u\|_{\Uc})\\
            &\leq& \sigma(\eps + \gamma(\|u\|_{\Uc})) + \gamma(\|u\|_{\Uc}).
\end{eqnarray*}
Now let $\tilde\eps := \sigma(2 \eps)>0$. 
Using the evident inequality $\sigma(a+b)\leq \sigma(2a) + \sigma(2b)$, valid for any $a,b\geq 0$, we proceed to
\begin{eqnarray*}
\|\phi(s+t,x,u)\|_X \leq \tilde\eps + \tilde\gamma(\|u\|_{\Uc}),
\end{eqnarray*}
where $\tilde\gamma(r) = \sigma(2\gamma(r)) + \gamma(r)$.

Overall, for any $\tilde\eps>0$ and any $r>0$ there exists $\tau=\tau(\eps,r) = \tau(\frac{1}{2}\sigma^{-1}(\tilde\eps),r)$,
so that for $t\geq \tau$ we have
\begin{eqnarray*}
\|\phi(t,x,u)\|_X \leq \tilde\eps + \tilde\gamma(\|u\|_{\Uc}).
\end{eqnarray*}
This shows that $\Sigma$ is UAG.
\end{proof}

The final technical lemma of this section is:
\begin{Lemm}
If $\Sigma$ is UAG and UGS, then $\Sigma$ is ISS.
\label{lem:UAG_implies_ISS}
\end{Lemm}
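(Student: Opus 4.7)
The plan is to exhibit $\beta \in \KL$ and a single $\gamma \in \K$ so that
\[
\|\phi(t,x,u)\|_X \leq \beta(\|x\|_X,t) + \gamma(\|u\|_{\Uc})
\]
holds for all admissible $t,x,u$. Without loss of generality I would assume that the gains appearing in UAG and UGS coincide with a common $\gamma \in \Kinf$ (take the pointwise maximum of the two), and denote by $\sigma \in \Kinf$ the coefficient furnished by UGS.

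Next, define
\[
\omega(r,t) := \sup\bigl\{\max\{0,\|\phi(s,x,u)\|_X - \gamma(\|u\|_{\Uc})\}\ :\ \|x\|_X \leq r,\ u \in \Uc,\ s \geq t\bigr\}.
\]
UGS gives $\omega(r,t) \leq \sigma(r) < \infty$ for every $r,t\geq 0$; in particular $\omega(0,t)=0$. By construction $\omega$ is nondecreasing in $r$, nonincreasing in $t$, and
\[
\|\phi(t,x,u)\|_X \leq \omega(\|x\|_X,t) + \gamma(\|u\|_{\Uc}).
\]
UAG translates into $\omega(r,t) \to 0$ as $t \to \infty$ for each fixed $r$: given $\eps > 0$, the choice $t \geq \tau(\eps,r)$ provided by UAG forces $\|\phi(s,x,u)\|_X - \gamma(\|u\|_{\Uc}) \leq \eps$ uniformly for $\|x\|_X \leq r$ and $u \in \Uc$, whence $\omega(r,t) \leq \eps$.

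The final step is to dominate $\omega$ by a $\KL$-function. I would invoke the standard $\KL$-majorization fact that any map $\omega:\R_+^2\to\R_+$ which is nondecreasing in $r$ with $\omega(0,t)=0$, nonincreasing in $t$ with $\lim_{t\to\infty}\omega(r,t)=0$ for each $r$, admits an upper estimate $\omega(r,t) \leq \beta(r,t)$ with $\beta \in \KL$. A typical construction smooths $\omega$ in $r$ by averaging (to recover continuity and strict monotonicity in $r$), adds a term like $\sigma(r)e^{-t}$ to enforce a strict $\LL$-decrease in $t$, and then passes to the resulting envelope. Combining the majorant with the displayed inequality yields the ISS estimate, with gain $\gamma$ and decay $\beta$.

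The main technical hurdle is precisely this last majorization: $\omega$ is a priori only monotone, and one must carefully smooth and strictly monotonize it to land inside $\KL$ while preserving the pointwise bound on $\phi$. This construction is standard in the ODE ISS literature (and does not rely on local compactness of $X$), so it transfers without modification to our infinite-dimensional setting.
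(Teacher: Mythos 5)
Your proposal is correct and follows essentially the same route as the paper: both reduce the claim to constructing a monotone envelope $\omega(r,t)$ bounding the excess of the trajectory over the common gain $\gamma$, use UGS to bound it by $\sigma(r)$ (giving continuity at $r=0$), use UAG to get decay in $t$, and finish with the standard $\KL$-majorization. The only cosmetic difference is that you define $\omega$ directly as a supremum over $s\geq t$, whereas the paper builds it by discretizing with $\eps_n=2^{-n}\sigma(r)$ and the associated times $\tau_n$ and then interpolating; both land at the same final step.
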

\begin{proof}
Assume that $\Sigma$ is UAG and UGS and that $\gamma$ in \eqref{GSAbschaetzung} and \eqref{UAG_Absch} are the same (otherwise pick $\gamma$ as a maximum of both of them).
Fix arbitrary $r \in \R_+$.  We are going to
construct a function $\beta \in \KL$ so that \eqref{iss_sum} holds.

From global stability it follows that there exist $\gamma,\sigma \in \Kinf$ such
that for all $ t\geq 0$, all $x \in \overline{B_r}$ and all $ u \in \Uc$ we have
\begin{equation}
    \label{lem7:help}
\|\phi(t,x,u)\|_X \leq \sigma(r) + \gamma(\|u\|_{\Uc}).
\end{equation}
Define $\eps_n:= 2^{-n}  \sigma(r)$, for $n \in \N$. The UAG property implies that there exists a sequence of times
$\tau_n:=\tau(\eps_n,r)$, which we may without loss of generality assume
to be strictly increasing, such that for all $x \in \overline{B_r}$ and all $u \in \Uc$
\[
\|\phi(t,x,u)\|_X \leq \eps_n + \gamma(\|u\|_{\Uc})\quad  \forall t \geq \tau_n.
\]
From \eqref{lem7:help} we see that we may set $\tau_0 := 0$.
Define $\omega(r,\tau_n):=\eps_{n-1}$, for $n \in \N$, $n \neq 0$ and $\omega(r,0):=2\eps_0=2\sigma(r)$.

Now extend the definition of $\omega$ 
to a function
$\omega(r,\cdot) \in \LL$. 
We obtain for $t \in (\tau_n,\tau_{n+1})$, $n=0,1,\ldots$ and $x\in B_r$
that
 $ \|\phi(t,x,u)\|_X \leq \eps_n + \gamma(\|u\|_{\Uc})< \omega(r,t) + \gamma(\|u\|_{\Uc})$.
Doing this for all $r \in \R_+$ we obtain the definition of the function $\omega$.

Now define $\hat \beta(r,t):=\sup_{0 \leq s \leq r}\omega(s,t) \geq
\omega(r,t)$ for $(r,t) \in \R_+^2$. From this definition it follows that, 
for each $t\geq 0$, $\hat\beta(\cdot,t)$ is 
increasing in $r$ and $\hat\beta(r,\cdot)$ is decreasing in $t$ for each $r>0$ as
every $\omega(r,\cdot) \in \LL$.
Moreover, for each fixed $t\geq0$, $\hat \beta(r,t) \leq \sup_{0 \leq s \leq r}\omega(s,0)=2\sigma(r)$, which implies that $\hat\beta$ is continuous in the first argument at $r=0$ for any fixed $t\geq0$.
Now it is easy to see that $(r,t) \mapsto \hat\beta(r,t) + |r|e^{-t}$ may be upper bounded by $\beta\in \KL$ and the estimate 
\eqref{iss_sum}
is satisfied with such a $\beta$. 
\end{proof}

\section{Strong ISS}
\label{sec:WeakISS}

As will be shown in Lemma~\ref{ISS_is_stronger_than_AG_GS}, the
combination of the  AG and UGS properties is weaker than ISS.
Therefore it is natural to ask for a weaker property than ISS which is
equivalent to the combination AG $\wedge$ UGS.
In this section, we prove a partial result of this kind.


\begin{Def}
\label{Def:sISS}
System $\Sigma$ is called {\it strongly input-to-state stable
(sISS)}, if there exist $\gamma \in \K$, $\sigma \in \Kinf$ and $\beta: X \times \R_+ \to \R_+$, so that

\begin{enumerate}
    \item  $\beta(x,\cdot) \in \LL$ for all $x \in X$, $x \neq 0$
  \item $\beta(x,t) \leq \sigma(\|x\|_X)$ for all $x \in X$ and all $t \geq 0$
    \item for all $ x \in X$, all $u\in \Uc$ and all $t\geq 0$ it
          holds that
\begin {equation}
\label{wISS_sum}
\| \phi(t,x,u) \|_{X} \leq \beta(x,t) + \gamma( \|u\|_{\Uc}).
\end{equation}
\end{enumerate}
\end{Def}

\begin{remark}
Clearly, ISS implies sISS, but the converse implication doesn't hold for infinite-dimensional systems in general.
Due to page limits, we do not give an example of sISS control systems, which are not ISS.
However, for the systems without inputs there are multiple examples showing the difference between GAS and UGAS, already in context of linear PDE systems, see
\cite{Oos00, AvL98} etc.
\end{remark}

In contrast to previous remark, an easy application of Proposition~\ref{Characterizations_ODEs} shows that for ODEs the notions of sISS and ISS are equivalent:
\begin{proposition}
\label{prop:sISS_equals_ISS_for_ODEs}
\eqref{eq:ODE_System} is sISS if and only if \eqref{eq:ODE_System} is ISS.
\end{proposition}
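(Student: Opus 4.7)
The proof is a quick application of Proposition~\ref{Characterizations_ODEs}, plus a routine verification in the converse direction. The plan is as follows.

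\medskip

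\textbf{Direction ISS $\Rightarrow$ sISS.} Suppose \eqref{eq:ODE_System} is ISS with $\beta_0 \in \KL$ and $\gamma \in \K$. Define $\beta(x,t) := \beta_0(\|x\|_X,t)$. Then $\beta(x,\cdot)\in\LL$ for every $x\neq 0$ (since $\beta_0(r,\cdot)\in\LL$ for all $r>0$), and $\beta(x,t)\leq \beta_0(\|x\|_X,0)\leq \sigma(\|x\|_X)$ for any $\sigma\in\Kinf$ dominating $r\mapsto \beta_0(r,0)$ (e.g.\ $\sigma(r):=\beta_0(r,0)+r$). Substituting into the ISS estimate gives \eqref{wISS_sum}, so sISS holds.

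\medskip

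\textbf{Direction sISS $\Rightarrow$ ISS.} Assume sISS with $\beta,\sigma,\gamma$ as in Definition~\ref{Def:sISS}. I will extract from sISS the two properties AG and 0-UGAS and then invoke Proposition~\ref{Characterizations_ODEs}. Using $\beta(x,t)\leq\sigma(\|x\|_X)$ yields the UGS-type bound
\[
\|\phi(t,x,u)\|_X \leq \sigma(\|x\|_X) + \gamma(\|u\|_\Uc)\quad\forall x,u,t,
\]
so in particular $\Sigma$ is UGS. Setting $u\equiv 0$ and using $\beta(x,t)\to 0$ as $t\to\infty$ for every $x\neq 0$ gives both 0-ULS (from the $\sigma$-bound) and 0-GATT, hence 0-GAS; by Remark~\ref{0-GAS_strong_stability}, for finite-dimensional $\Sigma$ this coincides with 0-UGAS. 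For the asymptotic gain, for any $x\in X$, $u\in\Uc$,
\[
\mathop{\overline{\lim}}_{t\to\infty}\|\phi(t,x,u)\|_X \leq \mathop{\overline{\lim}}_{t\to\infty}\beta(x,t)+\gamma(\|u\|_\Uc) = \gamma(\|u\|_\Uc),
\]
so by Remark~\ref{rem:LIMAG} the AG property holds. Thus $\Sigma$ satisfies AG\,$\wedge$\,0-UGAS, and Proposition~\ref{Characterizations_ODEs} immediately yields ISS of \eqref{eq:ODE_System}.

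\medskip

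There is no real obstacle: the entire content is bundled into the ODE-specific equivalences of Proposition~\ref{Characterizations_ODEs}, together with the trivial observation that the pointwise decay of $\beta(x,\cdot)$ in the sISS bound is exactly what is needed to conclude AG. The only point that deserves care is that $\beta(\cdot,t)$ in Definition~\ref{Def:sISS} is not assumed to be in any comparison class (only dominated by $\sigma\in\Kinf$), so we should not try to reconstruct a $\KL$-function directly from $\beta$; instead we pass through the AG characterization, where the finite dimension is used in an essential way via Proposition~\ref{Characterizations_ODEs}.
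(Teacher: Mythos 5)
Your proof is correct and follows essentially the same route as the paper: extract AG together with a global stability property from the sISS estimate and then invoke Proposition~\ref{Characterizations_ODEs}. The only difference is cosmetic -- the paper concludes directly from AG\,$\wedge$\,UGS, whereas you take an unnecessary detour through 0-GAS/0-UGAS even though your own UGS bound already places you in the equivalence class of Proposition~\ref{Characterizations_ODEs}.
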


\begin{proof}
ISS trivially implies sISS. Conversely, if \eqref{eq:ODE_System} is sISS, then \eqref{eq:ODE_System} is UGS and AG,
which by   Proposition~\ref{Characterizations_ODEs} implies that \eqref{eq:ODE_System} is ISS.
\end{proof}

Strong ISS can be characterized as follows.
\begin{Satz}
\label{wISS_equals_sAG_GS}
Let $\Sigma=(X,\Uc,\phi)$ be a forward complete control system.
The following statements are equivalent.
\begin{enumerate}[(i)] 
  \item $\Sigma$ is sISS.
  \item $\Sigma$ is sAG and UGS.
  \item $\Sigma$ is sLIM and UGS.
\end{enumerate}
\end{Satz}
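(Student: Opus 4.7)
The plan is to establish the cycle $(i)\Rightarrow(ii)\Rightarrow(iii)\Rightarrow(ii)\Rightarrow(i)$ by adapting the uniform arguments of Lemmas~\ref{lem:ULIM_plus_GS_implies_UAG} and~\ref{lem:UAG_implies_ISS}; the essential difference is that all bounds must depend pointwise on the state $x$ rather than on its norm $\|x\|_X$, but neither continuity nor joint regularity in $x$ is required.

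The implication $(i)\Rightarrow(ii)$ is immediate from the sISS definition: since $\beta(x,t)\leq\sigma(\|x\|_X)$ for all $t\geq 0$, estimate \eqref{wISS_sum} yields UGS; and for each fixed $x\neq 0$ and $\eps>0$, the property $\beta(x,\cdot)\in\LL$ produces $\tau=\tau(\eps,x)$ with $\beta(x,t)\leq\eps$ for $t\geq\tau$, which gives sAG (the case $x=0$ is trivial, since $\beta(0,t)\leq\sigma(0)=0$ forces $\|\phi(t,0,u)\|_X\leq\gamma(\|u\|_\Uc)$). The implication $(ii)\Rightarrow(iii)$ is immediate, as sAG yields sLIM by taking $t=\tau$.

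For $(iii)\Rightarrow(ii)$, after unifying the gains in UGS and sLIM by taking their maximum, I would combine the sLIM time $\tau(\eps,x)$ with the cocycle property. For any $s\geq\tau$ and the $t\leq\tau$ meeting the sLIM bound, the shift-invariance axiom and UGS give
\[
\|\phi(s,x,u)\|_X = \bigl\|\phi\bigl(s-t,\phi(t,x,u),u(t+\cdot)\bigr)\bigr\|_X \leq \sigma\bigl(\eps+\gamma(\|u\|_\Uc)\bigr)+\gamma(\|u\|_\Uc).
\]
Using the elementary inequality $\sigma(a+b)\leq\sigma(2a)+\sigma(2b)$ and reparametrizing $\eps$, this yields sAG with gain $\tilde\gamma(r):=\sigma(2\gamma(r))+\gamma(r)$, in exact analogy with Lemma~\ref{lem:ULIM_plus_GS_implies_UAG}.

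The main step is $(ii)\Rightarrow(i)$. For each fixed $x\in X$ with $x\neq 0$, I set $\eps_n:=2^{-n}\sigma(\|x\|_X)$ for $n\in\N$ and let $\tau_n(x)$ be the strictly increasing sequence of sAG times, with $\tau_0(x):=0$ (validated by UGS). Defining $\omega(x,\tau_n):=\eps_{n-1}$ for $n\geq 1$ and $\omega(x,0):=2\sigma(\|x\|_X)$, then extending $\omega(x,\cdot)$ to an $\LL$-function by piecewise linear interpolation in $t$, yields on every interval $(\tau_n,\tau_{n+1})$ the bound $\|\phi(t,x,u)\|_X\leq\eps_n+\gamma(\|u\|_\Uc)\leq\omega(x,t)+\gamma(\|u\|_\Uc)$, together with the pointwise majorization $\omega(x,t)\leq 2\sigma(\|x\|_X)$. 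Setting $\beta(x,t):=\omega(x,t)$ and the new comparison function $\sigma'(r):=2\sigma(r)$ delivers \eqref{wISS_sum} for all $x\neq 0$; the case $x=0$ is handled separately via the trivial bound noted above. I expect no serious obstacle; the only delicate point compared with Lemma~\ref{lem:UAG_implies_ISS} is that $\beta$ here is constructed pointwise in $x$, so no final upper bounding by a $\KL$-function is necessary (nor available), which is precisely what distinguishes sISS from ISS.
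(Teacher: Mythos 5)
Your proposal is correct and follows essentially the route the paper itself indicates: the paper defers (i)$\Leftrightarrow$(ii) to \cite{MiW16a} with the remark that the argument parallels Lemmas~\ref{ISS_implies_UAG} and~\ref{lem:UAG_implies_ISS}, and proves (iii)$\Rightarrow$(ii) ``along the lines of Lemma~\ref{lem:ULIM_plus_GS_implies_UAG}'' --- precisely the pointwise-in-$x$ adaptations you carry out. Your observation that no final $\KL$-majorization is needed (only $\beta(x,\cdot)\in\LL$ with $\beta(x,t)\leq 2\sigma(\|x\|_X)$) correctly identifies the one place where the strong and uniform arguments diverge.
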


\begin{proof}
The equivalence of (i) and (ii) is shown in \cite{MiW16a}; the idea of
proof is quite similar to arguments in the previous section.
%

(ii) $\Rightarrow$ (iii). This is clear.

(iii) $\Rightarrow$ (ii). Can be proved along the lines of Lemma~\ref{lem:ULIM_plus_GS_implies_UAG}.
%
%
%
%
\end{proof}

\section{Counterexamples}
\label{sec:Counterexamples}

Before we proceed to our main examples, let us take a quick look at linear systems.
\begin{Lemm}
\label{ISS_is_stronger_than_AG_GS}
For linear undisturbed infinite-dimensional systems of the form \eqref{InfiniteDim}, sAG $\wedge$ UGS does not imply LISS. 
\end{Lemm}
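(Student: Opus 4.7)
The plan is to exhibit a concrete counterexample: a strongly continuous semigroup $T$ on a Banach space $X$ that is uniformly bounded and strongly stable, yet not uniformly exponentially stable. For the linear undisturbed system $\dot x = Ax$ (a special case of \eqref{InfiniteDim} with $f\equiv 0$), each of the three properties at issue has a clean semigroup-theoretic translation, and the counterexample then falls out immediately.

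First I would translate the stability properties into semigroup language. For a linear system $\dot x = Ax$ with flow $\phi(t,x,0) = T(t)x$ and no input, one has $\sigma$-gain bounds of the form $\|T(t)x\|_X \le \sigma(\|x\|_X)$; by homogeneity, UGS is equivalent to $\sup_{t\ge 0}\|T(t)\| < \infty$. Similarly, sAG with zero input reduces to the requirement that for every $x\in X$ and $\eps>0$ there exists $\tau(\eps,x)$ so that $\|T(t)x\|_X\le\eps$ for $t\ge\tau(\eps,x)$, i.e. strong stability of the semigroup. Finally, LISS for an undisturbed linear system reduces, again by homogeneity, to the existence of $\beta\in\KL$ with $\|T(t)x\|_X\le\beta(\|x\|_X,t)$, which by scaling forces $\|T(t)\|\to 0$ as $t\to\infty$. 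For a $C_0$-semigroup this is equivalent to uniform exponential stability.

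With these three translations in hand, the lemma reduces to producing a $C_0$-semigroup which is uniformly bounded and strongly stable but satisfies $\|T(t)\|\not\to 0$. The standard choice is the left-shift semigroup on $X=L^2(0,\infty)$ given by $(T(t)f)(s) := f(s+t)$. I would verify the three ingredients in order:
\begin{enumerate}
\item[(a)] \emph{Strong continuity and norm.} A routine argument (density of compactly supported continuous functions and the dominated convergence theorem) shows $\|T(t)f - f\|_{L^2}\to 0$ as $t\to 0^+$ for every $f\in L^2$. Since the shift is a contraction, $\|T(t)\|=1$ for all $t\ge 0$, giving UGS.
\item[(b)] \emph{Strong stability.} For $f\in L^2(0,\infty)$, $\|T(t)f\|_{L^2}^2 = \int_t^\infty |f(s)|^2\,ds \to 0$ as $t\to\infty$ by absolute continuity of the integral; hence the undisturbed system is sAG (with $\gamma\equiv 0$).
\item[(c)] \emph{Failure of exponential decay.} Since $\|T(t)\|=1$ for all $t$, one cannot have $\|T(t)\|\to 0$, so by the translation above the system is not LISS.
\end{enumerate}

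The conclusion then follows by combining (a)--(c). There is no real obstacle here; the only subtlety is making sure one identifies the correct equivalent characterizations of sAG, UGS and LISS in the linear case so that all three can be read off a single semigroup. The homogeneity argument reducing LISS to exponential stability of $T$ is the step most worth spelling out, since it is what prevents the example from being an ODE (in finite dimensions $\|T(t)\|\to 0$ already from strong stability, so no such counterexample exists, consistent with Proposition~\ref{Characterizations_ODEs}).
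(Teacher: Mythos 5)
Your proof is correct and follows essentially the same route as the paper: both reduce UGS, sAG and LISS of the undisturbed linear system to uniform boundedness, strong stability and uniform exponential stability of the semigroup $T(\cdot)$, respectively, and then exploit the failure of the implication ``strong stability $\Rightarrow$ exponential stability''. The only difference is that you instantiate this failure with the concrete left-shift semigroup on $L^2(0,\infty)$, whereas the paper simply cites the general fact; your version is self-contained but not substantively different.
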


\begin{proof}
    Consider the linear system $\dot{x} = Ax$, where $A$ is the generator
    of a $C_0$-semigroup $T(\cdot)$.  For this system, it is observed in
    \cite{MiW15} that ISS is equivalent to 0-UGAS which is, in turn,     equivalent to the exponential stability of the semigroup $T(\cdot)$. By linearity and as there is no input these properties are also equivalent to LISS.

Also, as there is no input sAG is equivalent to AG.
    On the other hand,  using linearity we have the equivalences
    AG $\wedge$ UGS $\Leftrightarrow$ AG $\wedge$ ULS
    $\Leftrightarrow$ 0-GATT $\wedge$ 0-ULS $\Leftrightarrow$ 0-GAS
    $\Leftrightarrow$ strong stability of $T(\cdot)$ (for the last equivalence
    see Remark~\ref{0-GAS_strong_stability}).  Since strong stability of a
    semigroup does not imply exponential stability in general, the claim
    of the lemma follows.
\end{proof}

In this section we construct:
\begin{itemize}
  \item two nonlinear systems $\Sc^1$, $\Sc^3$ without inputs,
    \item two nonlinear systems $\Sc^2$, $\Sc^4$ with inputs,
\end{itemize}
providing counterexamples which show that the following implications are false (note that the axioms ($\Sigma$\ref{axiom:Identity})--($\Sigma$\ref{axiom:Cocycle}) are fulfilled for all $\Sc^i$):
\begin{itemize}
    \item[$\Sc^1$:] (FC)\,$\wedge$\,0-GAS\,$\wedge$\,0-UAS\ \ $\not\Rightarrow$\ \ BRS.
    \item[$\Sc^2$:] (FC)\,$\wedge$\,0-UGAS\,$\wedge$\,AG\,$\wedge$\,LISS\ \ $\not\Rightarrow$\ \ BRS.
    \item[$\Sc^3$:] (FC)\,$\wedge$\,BRS\,$\wedge$\,0-GAS\,$\wedge$\,0-UAS\ \ $\not\Rightarrow$\ \ 0-UGS.
    \item[$\Sc^4$:] (FC)\,$\wedge$\,BRS\,$\wedge$\,0-UGAS\,$\wedge$\,AG\,$\wedge$\,LISS\ \ $\not\Rightarrow$\ \ UGS.
\end{itemize}

System $\Sc^1$ shows that already for undisturbed systems nonuniform global attractivity does not ensure that the solution map $\phi(t,\cdot)$ maps bounded balls into bounded balls. And even if it does, then global stability still cannot be guaranteed, as clarified by system $\Sc^3$. This shows that the difference between nonuniform attractivity and stability is much bigger for nonlinear infinite-dimensional systems than it is for ODEs.

\begin{remark}
\label{rem:Nonimplications}
Before we proceed to  the constructions of the systems $\Sc^i$, let us show how they justify the negated implications depicted in Figure~\ref{ISS_Equiv}.
\begin{itemize}
    \item[(i)] Follows from Lemma~\ref{ISS_is_stronger_than_AG_GS}.
    \item[(ii)] Follows by construction of $\Sc^3$.
    \item[(iii)] Follows by construction of $\Sc^4$.
    \item[(iv)] Follows by construction of $\Sc^4$.
    \item[(v)] Follows from Lemma~\ref{ISS_is_stronger_than_AG_GS}.
    \item[(vi)] Follows by construction of $\Sc^3$ and/or $\Sc^4$.
    \item[(vii)] Follows from Lemma~\ref{ISS_is_stronger_than_AG_GS}.
    \item[(viii)] Follows by construction of $\Sc^4$.
    \item[(ix)] Follows from Lemma~\ref{ISS_is_stronger_than_AG_GS}.
\end{itemize}
In addition, we recall that Example~\ref{AG_GS_0UGAS_LISS_not_ISS}, which
is fully discussed in \cite{Mir16}, shows that 0-UGAS $\wedge$ sAG $\wedge$ AG with zero gain $\wedge$ UGS with zero gain $\wedge$ LISS with zero gain do not imply ISS (and even do not imply ULIM). Hence, the properties of the "second" level (sISS and AG $\wedge$ 0-UGAS) not only are different from each other (in the sense that they do not imply each other), but also even taken together they do not imply ISS.

Finally, systems $\Sc^1$ and $\Sc^2$ show that the systems with
global nonuniform attractivity properties together with very strong
properties near the equilibrium may not even be BRS.
\end{remark}

\begin{examp}[(FC)\,$\wedge$\,0-GAS\,$\wedge$\,0-UAS\ \ $\not\Rightarrow$\ \ BRS]
\label{0-GAS_but_not_GS}
According to Remark~\ref{0-GAS_strong_stability} for \textit{linear}
infinite-dimensional systems 0-GAS implies 0-UGS. Now we show that for nonlinear systems 0-GAS does not even imply BRS of the undisturbed system.
Consider the infinite-dimensional system $\Sc^1$ defined by
\begin{eqnarray}
\Sc^1: \left
\{\begin{array}{l}
\Sc^1_k:
\left
\{\begin{array}{l}
\dot{x}_k = -x_k + x_k^2 y_k - \frac{1}{k^2} x_k^3,\\
\dot{y}_k = - y_k. 
\end{array}
\right. 
\\
k=1,2,\ldots,
\end{array}
\right.
\label{eq:GAS_not_GS}
\end{eqnarray} 
with the state space 
\begin{eqnarray}
X:=l_2= \left\{ (z_k)_{k=1}^{\infty}: \sum_{k=1}^{\infty} |z_k|^2 <\infty , \quad z_k = (x_k,y_k) \in \R^2 \right\}.
\label{eq:l2_space}
\end{eqnarray}
 We show that $\Sc^1$ is forward complete, 0-GAS and 0-UAS but does not have bounded reachability sets.

Before we give a detailed proof of these facts, let us give an informal explanation of this phenomenon.
If we formally place $0$ into the definition of $\Sc^1_k$ instead of the term $- \frac{1}{k^2} x_k^3$, then the state of $\Sc^1_k$ (for each $k$) will exhibit a finite 
escape time, provided $y_k(0)$ is chosen large enough.
The term  $- \frac{1}{k^2} x_k^3$ prevents the solutions of $\Sc^1_k$ from growing to infinity: the solution then looks like a pike, which is then stopped by the damping $- \frac{1}{k^2} x_k^3$, and converges to 0 since $y_k(t)\to0$ as $t\to\infty$.
However, the larger is $k$, the higher will be the peaks, and hence there is no uniform bound for the solution of $\Sc^1$ starting from a bounded ball.
How we proceed to the rigorous proof.

First we argue that $\Sc^1$ is 0-UAS. Indeed, for $r<1$ the Lyapunov function $V(z) = \|z\|^2_{l_2}=\sum_{k=1}^{\infty} (x_k^2+y_k^2 )$ satisfies 
for all $z_k$ with $|x_k| \leq r$ and $|y_k| \leq r$, $k \in \N$, the estimate
\begin{equation}
\label{eq:Sigma1-Vdot}
\begin{aligned}
\dot{V}(z) &= 2 \sum_{k=1}^{\infty} (-x^2_k + x_k^3 y_k - \frac{1}{k^2} x_k^4- y_k^2) \\
&\leq 2 \sum_{k=1}^{\infty} (-x^2_k + |x_k|^3 |y_k| - \frac{1}{k^2} x_k^4- y_k^2)\\
&\leq 2 \sum_{k=1}^{\infty}  ( (r^2 - 1)x^2_k - y_k^2) \\
&\leq 2(r^2 - 1)V(z).
\end{aligned}
\end{equation}
We see that $V$ is an exponential local Lyapunov function for the system \eqref{eq:GAS_not_GS}
and thus \eqref{eq:GAS_not_GS} is locally uniformly asymptotically
stable. Indeed it is not hard to show that the domain of attraction 
contains $\{ z \in l_2 : |x_k| < r, |y_k| < r, \, \forall k \}$.

To show forward completeness and global attractivity of $\Sc^1$ we first point out that every $\Sc^1_k$ is 0-GAS (and hence 0-UGAS, since $\Sc^1_k$ is finite-dimensional). This follows from the fact that any $\Sc^1_k$ is a cascade interconnection of an ISS $x_k$-system (with $y_k$ as an input) and a globally asymptotically stable $y_k$-system, see \cite{Son89}.

Furthermore, for any $z(0) \in l_2$ there exists a finite $N >0$ such that
$|z_k(0)| \leq \tfrac{1}{2}$ for all $k \geq N$. Decompose the norm of $z(t)$ as follows
\[
\|z(t)\|_{l_2} = \sum_{k=1}^{N-1} |z_k(t)|^2 + \sum_{k=N}^{\infty} |z_k(t)|^2.
\]
According to the previous arguments, $\sum_{k=1}^{N-1} |z_k(t)|^2 \to 0$ as $t \to 0$ since all $\Sc^1_k$ are 0-UGAS for $k=1,\ldots,N-1$. 

Since $|z_k(0)| \leq \tfrac{1}{2}$ for all $k \geq N$, we can apply the computations as in \eqref{eq:Sigma1-Vdot}
in order to obtain (for $r:=\frac{1}{2}$) that
\[
\frac{d}{dt}\Big(\sum_{k=N}^{\infty} |z_k(t)|^2\Big) \leq -\frac{3}{2} \sum_{k=N}^{\infty} |z_k(t)|^2.
\]
Hence  
$\sum_{k=N}^{\infty} |z_k(t)|^2$ decays monotonically and exponentially to $0$ as $t \to \infty$.
Overall, $\|z(t)\|_{l_2} \to 0$ as $t \to \infty$ which shows that $\Sc^1$ is forward complete, 0-GAS and 0-UAS.

Finally, we show that $\Sc^1$ is not BRS.
To prove this, it is enough to show that there exists an $r>0$ and $\tau>0$ so that for any $M>0$ there exist $z \in l_2$ and $t \in [0,\tau]$ so that $\|z\|_{l_2} =r$ and 
$\|\phi(t,z,0)\|_{l_2} > M$.

Let us consider $\Sc^1_k$.
For $y_k \geq 1$ and for $x_k \in [0,k]$ it holds that 
\begin{equation}
\dot{x}_k \geq -2x_k + x_k^2.
\label{eq:Counterex_tmp}
\end{equation}
Pick an initial state $x_k(0)=c>0$ (which is independent of $k$) so that the solution of $\dot{x}_k = -2x_k + x_k^2$ blows up to infinity in time $t^*=1$. Now pick $y_k(0)=e$ (Euler's constant) for all $k=1,2,\ldots$. For this initial condition we obtain $y_k(t) = e^{1-t} \geq 1$ for $t \in [0,1]$.
And consequently for $z_k(0)=(c,e)^T$ there exists a time $\tau_k\in
(0,1)$ such that $x_k(\tau_k) = k$ for the solution of  $\Sc^1_k$.

Now consider an initial state $z(0)$ for $\Sc^1$, where 
$z_k(0)=(c,e)^T$ and $z_j(0)= (0,0)^T$ for $j\neq k$.
For this initial state we have that $\|z(t)\|_{l_2} = |z_k(t)|$ and 
\[
\sup_{t \geq 0}\|z(t)\|_{l_2}=\sup_{t \geq 0}|z_k(t)| \geq |x_k(\tau_k)| \geq  k.
\]
As $k\in\N$ was arbitrary, this shows that the system $\Sc^1$ is not BRS. 
\hfill \qedsymbol
\end{examp}

\begin{examp}[(FC)\,$\wedge$\,0-UGAS\,$\wedge$\,AG\,$\wedge$\,LISS\ \ $\not\Rightarrow$\ \ BRS]
\label{0-UGAS_AG_LISS_but_not_GS}
In this modification of Example~\ref{0-GAS_but_not_GS} it is demonstrated that 0-UGAS $\wedge$ AG $\wedge$ LISS does not imply BRS.
Let $\Sc^2$ be defined by
\begin{eqnarray*}
\Sc^2: \left
\{\begin{array}{l}
\Sc^2_k:
\left
\{\begin{array}{l}
\dot{x}_k = -x_k + x_k^2 y_k |u|- \frac{1}{k^2} x_k^3,\\
\dot{y}_k = - y_k. 
\end{array}
\right. 
\\
k=1,2,\ldots,
\end{array}
\right.
\end{eqnarray*}
And let the state space of $\Sc^2$ be $l_2$ (see \eqref{eq:l2_space}) and its input space be $\Uc:=PC(\R_+,\R)$.

Evidently, this system is 0-UGAS. Also, it is clear that $\Sc^2$ is not BRS, since for $u \equiv 1$ we obtain exactly the system from Example~\ref{0-GAS_but_not_GS}, which is not BRS. The proof that this system is forward complete, LISS and AG with zero gain mimics the argument we exploited to show 0-GATT of Example~\ref{0-GAS_but_not_GS} and thus we omit it.
\hfill \qedsymbol
\end{examp}

\begin{examp}[(FC)\,$\wedge$\,BRS\,$\wedge$\,0-GAS\,$\wedge$\,0-UAS\ \ $\not\Rightarrow$\ \ 0-UGS]
\label{ex:FC_0GAS_BRS_not_GS}
We construct a counterexample in 3 steps.

\textbf{Step 1.} Let us revisit Example~\ref{0-GAS_but_not_GS} and find
useful estimates from above for the dynamics of the subsystems
$\Sc^1_k$.

We first note that for initial conditions $z^0_k = (x^0_k,y^0_k)$ with $x^0_ky^0_k
\leq 0$ we have for the corresponding solution of $\Sc^1_k$  (see \eqref{eq:GAS_not_GS}) that $|z_k(t)| \leq |z^0_k|$ for all $t\geq0$.

It is easy to check that for each $k\in\N$ and each
$z_k(0)=(x_k(0),y_k(0)) \in \R^2$ with $y_k(0)x_k(0) >0$
the solution of $\Sc^1_k$ for the
initial condition $z_k(0)$ (see \eqref{eq:GAS_not_GS}) can be estimated
in norm by
\begin{eqnarray*}
|x_k(t)| \leq |\hat{x}_k(t)|
\end{eqnarray*}
where $\hat{x}_k(t)$ is the first component of the solution of the system
\begin{eqnarray}
\hat{\Sc}^1_k:
\left
\{\begin{array}{l}
\dot{\hat{x}}_k(t) = \hat{x}_k^2(t) y_k(0) \\
\dot{y}_k = - y_k. 
\end{array}
\right. 
\label{eq:Examp3_HilfSys}
\end{eqnarray}
with initial condition $\hat{z}_k(0) = (\hat{x}_k(0),y_k(0)) =
(x_k(0),y_k(0))$. 

This solution of the $\hat{x}_k$-subsystem of \eqref{eq:Examp3_HilfSys} reads as
\[
\hat{x}_k(t) = \frac{x_k(0)}{1-ty_k(0)x_k(0)},
\]
and this solution exists for $t\in[0,\frac{1}{y_k(0)x_k(0)})$.

Now pick any $R>0$ and assume that $z_k(0) = (x_k(0),y_k(0)) \in B_R$.
Since 
\[
\frac{1}{y_k(0)x_k(0)}\geq \frac{2}{y^2_k(0) + x^2_k(0)} \geq \frac{2}{R^2},
\]
the solutions of \eqref{eq:Examp3_HilfSys} for any initial condition $z_k(0)\in B_R$ exist at least on the time interval $[0,2R^{-2})$.
Moreover, for every such solution for 
$t\in [0,(2y_k(0)x_k(0))^{-1})$ (and in particular for $t\in [0,R^{-2}]$) it holds that
\[
|\hat{x}_k(t)| \leq 2|x_k(0)|.
\]
Overall, for each $R>0$, all $k\in\N$, all $z_k(0)=(x_k(0),y_k(0)) \in
B_R$ and all 
$t\in [0,R^{-2}]$
 the solution of $\Sc^1_k$
corresponding to the initial condition $z_k(0)$ satisfies
\begin{eqnarray}
|z_k(t)| =\sqrt{x_k^2(t) + y_k^2(t)} \leq \sqrt{\hat{x}_k^2(t) + y_k^2(t)} \leq 2 |z_k(0)|.
\label{eq:Examp3_local_estimate_Sigma_Sys}
\end{eqnarray}

\textbf{Step 2.} Now we are going to modify the system $\Sc^1$
by using time transformations.
 Define
$\tilde{x}_k(t):=x_k(\frac{t}{k})$, $\tilde{y}_k(t):=y_k(\frac{t}{k})$,
for any $t\geq 0$ and any $k\geq 1$. In other words, we make the time of the $k$th mode $k$
times slower than the time of $\Sc^1_k$. This new system we denote by
$\tilde\Sc^1$. The equations defining $\tilde\Sc^1$
are
\begin{eqnarray}
\tilde\Sc^1: \left
\{\begin{array}{l}
\tilde\Sc^1_k:
\left
\{\begin{array}{l}
\dot{\tilde{x}}_k = \frac{1}{k}\big(-\tilde{x}_k + \tilde{x}_k^2 \tilde{y}_k - \frac{1}{k^2} \tilde{x}_k^3\big),\\
\dot{\tilde{y}}_k = - \frac{1}{k} \tilde{y}_k. 
\end{array}
\right. 
\\
k=1,2,\ldots
\end{array}
\right.
\label{eq:GAS_BRS_not_GS}
\end{eqnarray} 
Again the state space of $\tilde\Sc^1$ is $l_2$, see \eqref{eq:l2_space}.

We have seen that $\Sc^1$ fails to satisfy the BRS property, since the solutions of subsystems $\Sc^1_k$ at a given time $t$ have larger pikes the larger $k$ is.
A nonuniform change of clocks in $\Sc^1$, performed above, makes such a behavior impossible.
At the same time, $\tilde\Sc^1$ still is not 0-UGS. Next, we show detailed proofs of these facts.

From the computation in \eqref{eq:Sigma1-Vdot} it is easy to obtain that
for the dynamics of $\tilde\Sc^1$ we have
$\dot V(z) \leq 0$ if $\|z\|_{l_2}\leq 1$. It
follows that for all $z(0)\in l_2$ with $\|z(0)\|_{l_2}\leq 1$ we have
\[
\|z(t)\|_X\leq \|z(0)\|_X,
\]
and therefore $\tilde\Sc^1$ is 0-ULS.

Forward completeness and global attractivity of $\tilde\Sc^1$ can be shown along the lines of Example~\ref{0-GAS_but_not_GS}.
This shows that $\tilde\Sc^1$ is 0-GAS.

Let us prove that $\tilde\Sc^1$ is BRS.
Pick any $R>0$, any time $\tau>0$ and any $z\in l_2$: $\|z\|_{l_2} \leq R$. In view of \eqref{eq:Examp3_local_estimate_Sigma_Sys}, we have for any $k\in\N$:
\begin{equation}
|\tilde{z}_k(t)| \leq 2 |z_k(0)|\,\quad 
\forall \ t\in [0,kR^{-2}].
\label{eq:Examp3_est}
\end{equation}

Hence there is a $N=N(R,\tau)$ so that the estimate \eqref{eq:Examp3_est}
holds for all $z\in B_R$, all $k\geq N$ and for all $t\in[0,\tau]$.
Thus, for all $z\in B_R$ and all $t\in[0,\tau]$ we have
\begin{eqnarray*}
\|z(t)\|^2_X &=& \sum_{k=1}^{N-1}|z_k(t)|^2 + \sum_{k=N}^{\infty}|z_k(t)|^2 \\
&\leq& \sum_{k=1}^{N-1}|z_k(t)|^2 + 4\sum_{k=N}^{\infty}|z_k(0)|^2 \\
&\leq& \sum_{k=1}^{N-1}|z_k(t)|^2 + 4R^2.
\end{eqnarray*}
Since $N$ is finite and depends on $R$ and $\tau$ only, and since every $z_k$-subsystem is GAS, it is clear that 
\[
\sup \{ \|z(t)\|_{l_2}  \midset \|z(0)\|_{l_2}\leq R,\ t\in[0,\tau]\} <\infty,
\]
so that $\tilde\Sc^1$ is BRS.

In order to show that $\tilde\Sc^1$ is not 0-UGS, recall the construction in Example~\ref{0-GAS_but_not_GS}.
Consider an initial state $z(0)$ for $\tilde\Sc^1$, where 
$\tilde{z}_k(0)=(c,e)^T$ and $\tilde{z}_j(0)= (0,0)^T$ for $j\neq k$.
For this initial state we have that $\|\tilde{z}(t)\|_{l_2} = |\tilde{z}_k(t)| \geq |\tilde{x}_k(t)| = |x_k(\frac{t}{k})|$.
And hence
\[
\sup_{t \geq 0}\|\tilde{z}(t)\|_{l_2} \geq |\tilde{x}_k(k\tau_k)| = |x_k(\tau_k)|  \geq  k.
\]
As $k\in\N$ was arbitrary, this shows that $\tilde\Sc^1$ is not 0-UGS. 

\textbf{Step 3.} Let $c$ be as in Step 2. Let $a:=\min\{c,\tfrac{1}{2}\}$
and choose a smooth function $\xi:\R\to\R$ with
\[
\xi(s):=
\begin{cases}
s & \text{, if } |s| \leq \frac{a}{4}, \\ 
0&\text{, if } |s| > \frac{a}{2}, \\ 
\in [-\frac{a}{2},-\frac{a}{4}]\cup [\frac{a}{4},-\frac{a}{2}],  & \text{, otherwise. }
\end{cases}
\]
Now consider the modification of $\tilde\Sc^1$, which we denote $\Sc^3$.
\begin{eqnarray}
\Sc^3: \left
\{\begin{array}{l}
\Sc^3_k:
\left
\{\begin{array}{l}
\dot{\tilde{x}}_k = -\xi(\tilde{x}_k) +  \frac{1}{k}\big(-\tilde{x}_k + \tilde{x}_k^2 \tilde{y}_k - \frac{1}{k^2} \tilde{x}_k^3\big),\\
\dot{\tilde{y}}_k = -\xi(\tilde{y}_k) - \frac{1}{k} \tilde{y}_k. 
\end{array}
\right. 
\\
k=1,2,\ldots
\end{array}
\right.
\end{eqnarray} 
The additional dynamics generated by $\xi$ improve the stability
properties of $\Sc^3$ when compared to $\tilde\Sc^1$.
In particular, since $\tilde\Sc^1$ is forward complete, 0-GAS, BRS,
$\Sc^3$ also has these properties.
Moreover, in a neighborhood of the origin the dynamics of $\Sc^3_k$ is dominated by the term $-\xi(\tilde{x}_k) = -\tilde{x}_k$ and $-\xi(\tilde{y}_k) = -\tilde{y}_k$ respectively, which renders $\Sc^3$ 0-UAS.
This can be justified e.g. by a Lyapunov argument, as in Example~\ref{0-GAS_but_not_GS}.

Now, since $\xi(s)=0$ for $s > \frac{a}{2}$, the argument, used to show that $\tilde\Sc^1$ is not 0-UGS, shows that $\Sc^3$ is again not 0-UGS.
\hfill \qedsymbol
\end{examp}

\begin{examp}[FC\,$\wedge$\,BRS\,$\wedge$\,0-UGAS\,$\wedge$\,AG\,$\wedge$\,LISS\ \ $\not\Rightarrow$\ \ UGS]
\label{examp:FC_BRS_0UGAS_AG_LISS_not_UGS}
Consider the system $\Sc^4$ with the state space $l_2$ (see \eqref{eq:l2_space}) and its input space be $\Uc:=PC(\R_+,\R)$.
\begin{eqnarray}
\Sc^4: \left
\{\begin{array}{l}
\Sc^4_k:
\left
\{\begin{array}{l}
\dot{\tilde{x}}_k = -\xi(\tilde{x}_k) +  \frac{1}{k}\big(-\tilde{x}_k + \tilde{x}_k^2 \tilde{y}_k |u| - \frac{1}{k^2} \tilde{x}_k^3\big),\\
\dot{\tilde{y}}_k = -\xi(\tilde{y}_k) - \frac{1}{k} \tilde{y}_k. 
\end{array}
\right. 
\\
k=1,2,\ldots
\end{array}
\right.
\label{eq:GAS_BRS_AG_not_UGS}
\end{eqnarray} 
Since this example is a combination of the two previous ones, we omit all details and just mention that
$\Sc^4$ is forward complete, BRS, 0-UGAS, LISS and AG with zero gain, but at the same time $\Sc^4$ is not UGS.
\hfill \qedsymbol
\end{examp}

\section{Robustness of equilibria for differential equations in Banach spaces}
\label{sec:Equations_in_Banach_space}

In this section, we show that for system \eqref{InfiniteDim} satisfying
Assumption~\ref{Assumption1}, boundedness of reachability sets implies the
CEP property. This technical result is needed for the proof of Theorem~\ref{thm:MainResult_Characterization_ISS_EQ_Banach_Spaces}.

\begin{Def}
\label{axiom:Lipschitz}
The flow of \eqref{InfiniteDim} is called Lipschitz continuous on compact intervals (for uniformly bounded inputs), if 
for any $\tau>0$ and any $R>0$ there exists $L>0$ so that for any $x,y \in \overline{B_R}$,
for all $u \in B_{R,\Uc}$ and all $t \in [0,\tau]$ and it holds that 
\begin{eqnarray}
\|\phi(t,x,u) - \phi(t,y,u) \|_X \leq L \|x-y\|_X.
\label{eq:Flow_is_Lipschitz}
\end{eqnarray}    
\end{Def}

We have the following:
\begin{Lemm}
\label{lem:Regularity}
Let  Assumption~\ref{Assumption1} hold 
and assume that \eqref{InfiniteDim}
is BRS.
 Then \eqref{InfiniteDim} has a flow which is Lipschitz continuous on compact intervals for uniformly bounded inputs.
\end{Lemm}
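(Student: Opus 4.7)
The plan is to combine the BRS hypothesis with the Lipschitz-on-bounded-sets property of $f$ (Assumption~\ref{Assumption1}) and the standard local boundedness of a $C_0$-semigroup, and then apply Gronwall's inequality to the integral form of the difference $\phi(t,x,u)-\phi(t,y,u)$.

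First I would fix $\tau>0$ and $R>0$, and use the BRS property to obtain a constant
\[
C := \sup\{\|\phi(t,z,v)\|_X \midset \|z\|_X\leq R,\ \|v\|_{\Uc}\leq R,\ t\in[0,\tau]\} < \infty.
\]
Then both $\phi(s,x,u)$ and $\phi(s,y,u)$ remain in $\overline{B_C}$ for every $s\in[0,\tau]$, whenever $x,y\in\overline{B_R}$ and $u\in B_{R,\Uc}$. By Assumption~\ref{Assumption1}(i) we then have the uniform Lipschitz estimate
\[
\|f(\phi(s,x,u),u(s)) - f(\phi(s,y,u),u(s))\|_X \leq L_f(C)\,\|\phi(s,x,u)-\phi(s,y,u)\|_X.
\]
Since $T$ is a $C_0$-semigroup, it is uniformly bounded on $[0,\tau]$, so set $M_\tau := \sup_{t\in[0,\tau]}\|T(t)\| < \infty$.

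Next I would write both trajectories in the mild form \eqref{InfiniteDim_Integral_Form}, subtract, and take norms. Setting $g(t) := \|\phi(t,x,u)-\phi(t,y,u)\|_X$ this yields, for all $t\in[0,\tau]$,
\[
g(t) \leq M_\tau\|x-y\|_X + M_\tau L_f(C)\int_0^t g(s)\,ds.
\]
An application of Gronwall's inequality then gives
\[
g(t) \leq M_\tau\|x-y\|_X\,\exp\bigl(M_\tau L_f(C)\,\tau\bigr) \quad \forall t\in[0,\tau],
\]
so that \eqref{eq:Flow_is_Lipschitz} holds with the constant $L := M_\tau\exp(M_\tau L_f(C)\tau)$, which depends only on $\tau$ and $R$ (through $C=C(R,\tau)$), as required.

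There is no substantial obstacle here; the argument is essentially the standard Gronwall-based continuous dependence proof for mild solutions. The only nontrivial point is that BRS is precisely what is needed to guarantee that the trajectories stay in a common bounded set uniformly in $x$, $y$ and $u$, so that a \emph{single} Lipschitz constant $L_f(C)$ from Assumption~\ref{Assumption1} applies throughout the integration; without BRS the Lipschitz constant along individual trajectories could blow up and Gronwall would not yield a uniform $L$.
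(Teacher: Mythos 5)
Your proof is correct and follows essentially the same route as the paper: BRS gives a common bound $C$ on both trajectories, so a single Lipschitz constant $L_f(C)$ applies, and Gronwall applied to the mild formulation yields the uniform constant. The only cosmetic difference is that you bound $\|T(t)\|$ by its supremum over $[0,\tau]$ directly, whereas the paper uses the exponential bound $\|T(t)\|\leq Me^{\lambda t}$ together with the rescaling $z(t)=e^{-\lambda t}x(t)$ before invoking Gronwall; both are standard and equivalent here.
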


\begin{proof}
Pick any $R>0$, any $x,y \in \overline{B_R}$ and any $u \in B_{R,\Uc}$. 
Let $x(t):=\phi(t,x,u)$, $y(t):=\phi(t,y,u)$ be the solutions of
\eqref{InfiniteDim} defined on the whole nonnegative time axis.

Pick any $\tau >0$ and set
\[
K(R,\tau):= \sup_{\|x\|_X\leq R,\ \|u\|_{\Uc}\leq R,\ t \in [0,\tau]}\|\phi(t,x,u)\|_X,
\]
which is finite since \eqref{InfiniteDim} is BRS. 

Note also that there exist $M,\lambda>0$ such that $\|T(t)\|\leq Me^{\lambda t}$ for all $t\geq 0$.  
We have for any $t \in [0,\tau]$:
\begin{multline*}
\|x(t) - y(t)\|_X  
\leq \|T(t)\|\|x-y\|_X \\
 + \int_0^t{\|T(t-s)\|\|f(x(r),u(r))-f(y(r),u(r))\|_X dr} \\
\leq Me^{\lambda t} \|x-y\|_X + L(K(R,\tau)) \int_0^t Me^{\lambda (t-r)}\|x(r)-y(r)\|_X dr.
\end{multline*}
Define $z_1(t):=e^{-\lambda t}x(t)$, $z_2(t):=e^{-\lambda t}y(t)$. We can rewrite the above implications as
\begin{align*}
\|z_1(t) - z_2(t)\|_X \leq & M \|x-y\|_X \\
&+ M L(K(R,\tau)) \int_0^t \|z_1(r)-z_2(r)\|_X dr.
\end{align*}

According to Gr\"onwall's inequality we obtain for $t\in[0,\tau]$:
\begin{align*}
\|z_1(t) - z_2(t)\|_X  \leq M \|x-y\|_X e^{ M L(K(R,\tau)) t},
\end{align*}
or equivalently
\begin{align*}
\|x(t) - y(t)\|_X  &\leq M \|x-y\|_X e^{ (M L(K(R,\tau)) + \lambda) t}\\
  &\leq M e^{ (M L(K(R,\tau)) + \lambda) \tau} \|x-y\|_X,
\end{align*}
which proves the lemma. 
\end{proof}

Now we show that $x=0,u=0$ is a point of continuity for \eqref{InfiniteDim}.
\begin{Lemm}
\label{lem:RobustEquilibriumPoint}
Let  Assumption~\ref{Assumption1} holds and assume that \eqref{InfiniteDim} is BRS. 
Then $\phi$ is continuous at the equilibrium.
%
\end{Lemm}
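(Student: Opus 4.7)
The plan is to prove CEP directly from the mild-solution representation by combining three ingredients: the strong-continuity bound $\|T(t)\|\le Me^{\lambda t}$, the local Lipschitz property of $f$ in $x$ (Assumption~\ref{Assumption1}(i)), and the continuity of $f(0,\cdot)$ at $0$ with $f(0,0)=0$ (Assumption~\ref{Assumption1}(ii)), together with the uniform bound on the trajectory furnished by BRS. I would fix once and for all a reference radius $\delta_0=1$ and set
\[
C:=\sup\{\|\phi(s,x,u)\|_X\midset \|x\|_X\le 1,\ \|u\|_\Uc\le 1,\ s\in[0,h]\},
\]
which is finite by BRS, and abbreviate $L:=L_f(C)$. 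All subsequent estimates will be restricted to $\|x\|_X\le\delta\le 1$ and $\|u\|_\Uc\le\delta\le 1$, guaranteeing $\|\phi(s,x,u)\|_X\le C$ throughout $[0,h]$ so that the Lipschitz bound applies along the whole trajectory.

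Next I would split the nonlinearity in the variation-of-constants formula as
\[
f(\phi(s,x,u),u(s))=\bigl(f(\phi(s,x,u),u(s))-f(0,u(s))\bigr)+f(0,u(s)).
\]
The first summand is controlled by $L\|\phi(s,x,u)\|_X$. For the second, continuity of $f(0,\cdot)$ at $0$ together with $f(0,0)=0$ yields, for any prescribed $\eta>0$, a $\delta_1=\delta_1(\eta)>0$ such that $\|v\|_U\le\delta_1$ implies $\|f(0,v)\|_X\le\eta$; since $\|u\|_\Uc=\sup_{s\ge0}\|u(s)\|_U$, imposing $\|u\|_\Uc\le\delta_1$ delivers $\|f(0,u(s))\|_X\le\eta$ for all $s\ge0$. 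Substituting these bounds gives, for every $t\in[0,h]$,
\[
\|\phi(t,x,u)\|_X\le Me^{\lambda h}\|x\|_X+ML\int_0^t e^{\lambda(t-s)}\|\phi(s,x,u)\|_X\,ds+M\eta\int_0^h e^{\lambda(h-s)}\,ds.
\]

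From here, multiplying through by $e^{-\lambda t}$ and applying Grönwall's inequality to $\psi(t):=e^{-\lambda t}\|\phi(t,x,u)\|_X$, I obtain a bound of the form
\[
\|\phi(t,x,u)\|_X\le K(h)\bigl(\|x\|_X+\eta\bigr),\qquad t\in[0,h],
\]
where $K(h)$ depends only on $M$, $\lambda$, $L$ and $h$ (and not on $x$ or $u$). Given $\eps>0$ and $h>0$, I first pick $\eta>0$ so small that $K(h)\eta<\eps/2$, then set $\delta:=\min\{1,\delta_1(\eta),\eps/(2K(h))\}$; for $\|x\|_X\le\delta$ and $\|u\|_\Uc\le\delta$ this forces $\|\phi(t,x,u)\|_X\le\eps$ on $[0,h]$, which is exactly the CEP estimate \eqref{eq:RobEqPoint}.

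The only potentially delicate step is the initial bootstrap that keeps the trajectory inside $B_C$ on all of $[0,h]$ so that the Lipschitz constant $L=L_f(C)$ remains valid; this is why I reserve the BRS-based radius $C$ up front using $\delta_0=1$ rather than trying to derive $C$ from the intended $\delta$. With that choice the Lipschitz bound applies unconditionally on the trajectory and the Grönwall argument is a routine calculation; no further obstacle arises.
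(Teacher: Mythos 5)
Your proof is correct, and it takes a somewhat different route from the paper's. The paper first proves an auxiliary result (Lemma~\ref{lem:Regularity}: the flow is Lipschitz on compact intervals, itself a Gr\"onwall argument on the difference of two trajectories), then decomposes $\|\phi(t,x,u)\|_X\le\|\phi(t,x,u)-\phi(t,0,u)\|_X+\|\phi(t,0,u)\|_X$, controlling the first term by that lemma and the second by a second Gr\"onwall argument on the trajectory emanating from $0$ --- the latter being exactly your splitting $f(\cdot,v)=\bigl(f(\cdot,v)-f(0,v)\bigr)+f(0,v)$ specialized to the zero initial state. You instead run a single Gr\"onwall pass directly on $\|\phi(t,x,u)\|_X$, absorbing the $T(t)x$ term into the inhomogeneity; this makes the proof self-contained (no appeal to the flow-Lipschitz lemma) and slightly shorter, at the cost of not producing the Lipschitz continuity of the flow as a reusable by-product, which the paper's modular route does. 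Your handling of the only delicate points matches the paper's: BRS with the fixed reference radius $\delta_0=1$ pins down the Lipschitz constant $L_f(C)$ along the whole trajectory on $[0,h]$, and continuity of $f(0,\cdot)$ with $f(0,0)=0$ plus $\|u\|_\Uc=\sup_s\|u(s)\|_U$ gives the uniform smallness of $\|f(0,u(s))\|_X$. The remaining cosmetic points (taking $\lambda\ge0$ in the semigroup bound so that $e^{\lambda t}\le e^{\lambda h}$, and using $L_f(C+1)$ if one insists that Assumption~\ref{Assumption1} is stated on the open ball) are harmless and are glossed over in the paper as well.
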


\begin{proof}
Pick any $\eps >0$, $\tau \geq 0$, $\delta >0$ and choose any $x \in X$
with $\|x\|_X \leq \delta$ as well as any $u\in B_{\delta,\Uc}$. It holds that
\begin{eqnarray*}
\|\phi(t,x,u) \|_X 
            &\leq& \|\phi(t,x,u) -\phi(t,0,u)\|_X + \|\phi(t,0,u)\|_X.
\end{eqnarray*}
By Lemma~\ref{lem:Regularity}, the flow of \eqref{InfiniteDim} is
Lipschitz continuous on compact time intervals. Hence there exists a $L(\tau,\delta)$ so that for $t\in[0,\tau]$
\begin{eqnarray*}
\|\phi(t,x,u) -\phi(t,0,u)\|_X \leq L(\tau,\delta)\|x\|_X \leq L(\tau,\delta)\delta.
\end{eqnarray*}
Let us estimate $\|\phi(t,0,u)\|_X$. We have:
\begin{align*}
\|\phi(t,0,u)\|_X =& \Big\|\int_0^t T(t-s)f(\phi(s,0,u),u(s))ds\Big\|_X\\
\leq& \int_0^t \|T(t-s)\| \Big(\big\|f(\phi(s,0,u),u(s))-f(0,u(s))\big\|_X\\
&\qquad\qquad\qquad\qquad + \|f(0,u(s))\|_X\Big)ds.
\end{align*}
Since $f(0,\cdot)$ is continuous, for any $\eps_2>0$ there exists $\delta_2<\delta$ so that $u(s) \in B_{\delta_2}$ implies that 
$\|f(0,u(s))-f(0,0)\|_X \leq \eps_2$. Since $f(0,0)=0$ due to Assumption~\ref{Assumption1}, for the above $u$ we have
$\|f(0,u(s))\|_X \leq \eps_2$.

Due to the BRS property, there exists $K(\tau,\delta_2)$ with
$\|\phi(s,0,u)\|_X\leq K(\tau,\delta_2)$ for any $u\in B_{\delta_2,\Uc}$ and $s\in[0,\tau]$. Now, Lipschitz continuity of $f$ shows that
\begin{equation*}
\|\phi(t,0,u)\|_X \leq  \int_0^t Me^{\lambda (t-s)} \big(L(K(\tau,\delta))\|\phi(s,0,u)\|_X + \eps_2\big)ds.
\end{equation*}
Define $z(t):=e^{-\lambda t}\phi(t,0,u)$, for $t\in[0,\tau]$. We have 
\begin{equation*}
\|z(t)\|_X \leq  M L(K(\tau,\delta_2))\int_0^t \|z(s)\|_Xds + M\tau\eps_2.
\end{equation*}
Now Gr\"onwall Lemma implies that 
\begin{equation*}
\|z(t)\|_X \leq M\tau\eps_2 e^{M L(K(\tau,\delta_2)) t},
\end{equation*}
in other words
\begin{equation*}
\|\phi(t,0,u)\|_X \leq M\tau\eps_2 e^{(M L(K(\tau,\delta_2)) + \lambda) t},
\end{equation*}
Overall, for $x\in B_{\delta_2}$, for $u\in B_{\delta_2,\Uc}$ and for $t\in[0,\tau]$ we have
\begin{equation*}
\|\phi(t,x,u)\|_X \leq L(\tau,\delta_2)\delta_2 + M\tau\eps_2 e^{(M L(K(\tau,\delta_2)) + \lambda) \tau}.
\end{equation*}
To finish the proof choose $\eps_2$ and $\delta_2$ small enough to ensure that 
\[
L(\tau,\delta_2)\delta_2 + M\tau\eps_2 e^{(M L(K(\tau,\delta_2)) + \lambda) \tau}\leq \eps.
\]
\end{proof}

\section{Characterization of ISS for ODEs}
\label{sec:ODE_LIM_ULIM}

This paper introduces the new properties strong ISS (sISS), strong
asymptotic gain and the strong and the uniform limit
property (sLIM and ULIM). It is worth pointing out that this yields no new
concepts for finite dimensional systems of the form \eqref{eq:ODE_System}.
Indeed, for ODE systems Proposition~\ref{Characterizations_ODEs} and Theorem~\ref{thm:MainResult_Characterization_ISS} show the equivalence of sISS and
ISS. The following
Proposition~\ref{prop:ULIM_equals_LIM_in_finite_dimensions} shows that all
versions of the limit property coincide in the finite dimensional
case. For systems without inputs, finite dimensional examples of fixed points that are attractive but not
stable show that sAG does not imply UAG even for systems of the form
\eqref{eq:ODE_System}. At the moment it is not clear whether AG implies
sAG.

\begin{proposition}
\label{prop:ULIM_equals_LIM_in_finite_dimensions}
Assume the finite-dimensional system \eqref{eq:ODE_System} is
forward complete. Then \eqref{eq:ODE_System} is LIM if and only if it is
ULIM.
\end{proposition}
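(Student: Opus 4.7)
The easy direction is ULIM $\Rightarrow$ LIM: the gain furnished by ULIM together with the time $\tau(\varepsilon,\|x\|_X)$ immediately yields the pointwise existence of a witness $t$ required by LIM. So the content lies in the reverse implication.

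For LIM $\Rightarrow$ ULIM my plan is to argue by contradiction, exploiting the local compactness of $\R^n$. Suppose LIM holds with gain $\gamma\in\K\cup\{0\}$ but ULIM fails with this same $\gamma$. Negating ULIM produces $\varepsilon_0>0$, $r_0>0$ and sequences $x_n\in\overline{B_{r_0}}$, $u_n\in\Uc$ such that
\begin{equation*}
\|\phi(t,x_n,u_n)\|_X > \varepsilon_0 + \gamma(\|u_n\|_\Uc),\qquad \forall\,t\in[0,n].
\end{equation*}
Since $\overline{B_{r_0}}\subset\R^n$ is compact, pass to a subsequence (not relabeled) with $x_n\to x^\ast$. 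The LIM property applied to $x^\ast$ with input $u_n$ and tolerance $\varepsilon_0/2$ produces a time $s_n$ with $\|\phi(s_n,x^\ast,u_n)\|_X\le \varepsilon_0/2+\gamma(\|u_n\|_\Uc)$. The strategy is then to transfer this estimate to the trajectory starting at $x_n$ and derive the contradiction.

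The workhorse for the transfer is a standard Gr\"onwall-type estimate in the ODE setting: for each $R,T>0$ and each input norm bound $C$, solutions depend Lipschitz continuously on the initial condition uniformly over inputs with $\|u\|_\Uc\le C$ and times in $[0,T]$ with the solutions staying in a fixed bounded region (which is automatic here because forward complete finite-dimensional ODE systems are BRS, cf.\ Lemma~\ref{lem:Boundedness_Reachability_Sets_criterion} together with the classical argument of Lin--Sontag--Wang). Provided the candidate time $s_n$ together with $\|u_n\|_\Uc$ can be kept uniformly bounded along a subsequence, the Gr\"onwall estimate gives $\|\phi(s_n,x_n,u_n)-\phi(s_n,x^\ast,u_n)\|_X\to 0$, contradicting the lower bound $\varepsilon_0+\gamma(\|u_n\|_\Uc)$.

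The real obstacle is therefore the case in which $\|u_n\|_\Uc$ or $s_n$ escapes to infinity, because then the Gr\"onwall constant blows up and the direct comparison breaks down. To overcome this I would, following the line of reasoning in \cite{SoW96}, renormalize: if $\|u_n\|_\Uc\to\infty$ the radius $\varepsilon_0+\gamma(\|u_n\|_\Uc)$ grows so that one can truncate $u_n$ on a fixed initial window and apply continuous dependence on this window, then use the cocycle property together with a covering argument on the compact set $\overline{B_{r_0}}$ to replace the sequence $(x_n)$ by finitely many cluster points. Concretely, cover $\overline{B_{r_0}}$ by finitely many balls of radius $\delta$ chosen from the local uniform continuous dependence, apply LIM at the finitely many centers for each admissible $u$, and patch via a single $\tau(\varepsilon_0,r_0)$ obtained as the maximum of the finitely many times --- this is exactly the finite-dimensional mechanism by which the non-uniform LIM upgrades to ULIM, and yields the contradiction and hence the proposition.
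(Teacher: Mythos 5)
The easy direction and the identification of the obstacle are fine, but the proposed resolution of the obstacle does not close the gap. The crux of LIM $\Rightarrow$ ULIM is uniformity over the \emph{inputs}, and local compactness of $\R^n$ only helps with uniformity over the initial states. Your covering-and-patching step --- ``apply LIM at the finitely many centers for each admissible $u$, and patch via a single $\tau(\eps_0,r_0)$ obtained as the maximum of the finitely many times'' --- presupposes that for each fixed center $x_i$ the LIM time $t(x_i,u,\eps)$ is bounded uniformly over $u\in B_{R,\Uc}$; that is precisely the sLIM property, which is not given and is essentially the content of what you are trying to prove. There is also a circularity in the patching itself: the radius $\delta$ of the covering balls must come from a Gr\"onwall estimate over the time horizon $[0,t_i]$, but $t_i$ depends on $u$ and is a priori unbounded, so $\delta$ cannot be fixed before the times are. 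The set $\{u:\|u\|_\Uc\le R\}$ is not compact in any norm topology in which the flow is Lipschitz, so passing to a convergent subsequence of the $u_n$ (or of the pairs $(x_n,u_n)$) is not available by elementary means; the uniformity over inputs in \cite{SoW96} comes from weak-$*$ compactness of the input ball together with nontrivial continuity properties of reachable sets, not from a covering of $\overline{B_{r_0}}$.

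The paper's proof is structured quite differently: it does not argue by contradiction and does not reprove the uniformity. It invokes \cite[Corollary III.3]{SoW96} as a black box, which directly yields a uniform time $\tau(\eps,r,R)$ such that every trajectory from $\overline{B_r}$ with $\|u\|_\Uc\le R$ enters the ball of radius $\eps+\gamma(R)$ by time $\tau$. The remaining work --- which your proposal does not address because your contradiction setup already carries $\gamma(\|u_n\|_\Uc)$ --- is to convert the bound $\eps+\gamma(R)$, with the gain evaluated at the input-norm \emph{bound} $R$, into $\eps+\gamma(\|u\|_\Uc)$ as required by ULIM. This is done by a finite descending sequence of shells $R_{k+1}\le\|u\|_\Uc\le R_k$ with $\gamma(R_k)=\max\{r-(k+1)\eps/2,0\}$, terminating after roughly $2r/\eps$ steps, and taking the maximum of the finitely many resulting times. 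If you want to keep your contradiction framework, you would still need to import \cite[Corollary III.3]{SoW96} (or reprove it), at which point the direct argument is simpler.
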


\begin{proof}
    It is clear that ULIM implies LIM.  For the converse statement we will
    make use of \cite[Corollary III.3]{SoW96}. The
    result may be applied as follows. Assume \eqref{eq:ODE_System} is LIM
    and let $\gamma \in {\cal K}_\infty$ be the corresponding gain. Fix
    $\eps>0$, $r>0$ and $R>0$. By the LIM property, for all $x_0 \in \R^n$
    and all $u\in {\cal U}$ with $\| u \|_\infty \leq R$ there is a time
    $t\geq 0$ such that $| \phi(t,x,u)| \leq \tfrac{\eps}{2} + \gamma(R)$. Then
    \cite[Corollary III.3]{SoW96} states that there is a $\tau =
    \tau(\eps,r,R)$ such that for all $x \in \overline{B_r}$, $u\in B_{R,\Uc}$ there exists a
    $t \leq \tau(\eps,r,R)$ such that 
    \begin{equation}
        \label{eq:4}
        | \phi(t,x,u)| \leq \eps + \gamma(R).
    \end{equation}
    With this argument at hand, we now proceed to show ULIM. 
        We are going to find a $\tilde\tau=\tilde\tau(\eps,r)$ for which the ULIM property holds.
        Define $R_1 := \gamma^{-1}(\max\{r - \eps,0\})$.
        Then for each $u\in\Uc$: $\|u\|_\infty \geq R_1$ and each $x \in \overline{B_r}$ it holds that 
        \begin{equation*}
        | \phi(0,x,u)| = |x| \leq r \leq \eps + \gamma(R_1) \leq \eps + \gamma(\|u\|_\infty),
    \end{equation*}
        and the time $t(\eps,r,u)$ in the definition of ULIM can be chosen for such $u$ as $t:=0$.
        %
%
        
        Now set $\tau_1:= \tau(\frac{\eps}{2},r, R_1)$. Then by the argument leading to \eqref{eq:4} we have for all
    $x\in \overline{B_r}$ and $u\in\Uc: \|u\|_\infty \leq R_1$ a time $t\leq \tau_1$ such    that 
\begin{eqnarray}
            |\phi(t,x,u)| \leq \eps + \gamma(R_1) - \frac{\eps}{2}.
\label{eq:41}
\end{eqnarray}        
Define 
\[
R_2 := \gamma^{-1}\Big(\max\big\{\gamma(R_1)-\frac{\eps}{2},0\big\}\Big) = \gamma^{-1}\Big(\max\big\{r - \frac{3\eps}{2},0\big\}\Big). 
\]
From \eqref{eq:41} we obtain for all $u$ with $R_2 \leq \|u\|_\infty \leq
R_1$ that for the above $t$ we have
$| \phi(t,x,u)| \leq \eps + \gamma(\|u\|_\infty)$. 
        For $k\in\N$ define the times $\tau_k:= \tau(\tfrac{\eps}{2},r,R_k)$ and
\[
R_k := \gamma^{-1}\Big(\max\big\{\gamma(R_{k-1})-\frac{\eps}{2},0\big\}\Big) = \gamma^{-1}\Big(\max\big\{r - \frac{(k+1)\eps}{2},0\big\}\Big). 
\]        
        Repeating the previous argument we see that for all $x\in \overline{B_r}$ and all $u\in\Uc$ with $R_{k+1} \leq \|u\|_\infty \leq R_k$ there is a time
    $t \leq \tau_k$ such that $| \phi(t,x,u)| \leq \eps + \gamma(\|u\|_{\infty})$. 
        The procedure ends after finitely many steps because eventually $r - \tfrac{(k+1)\eps}{2}$ becomes negative. 
    The claim now follows for $\tilde\tau := \max \big\{
    \tau_k \;|\; k=1,\ldots, \lfloor \tfrac{2r}{\eps} \rfloor +1 \big\}$, where $\lfloor
    \cdot \rfloor$ denotes the integer part of a real number. 
\end{proof}

\section{Systems without inputs}
\label{No_disturbances}

In this section, we classify the stability notions for abstract systems
$\Sigma=(X,\phi,0)$
without inputs. This simplified picture can be helpful in understanding the general case and at the same time it is interesting in its own right.

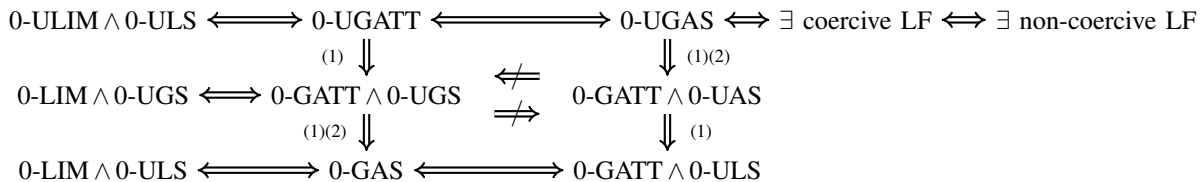
\begin{figure*}[tbh]
\centering
\begin{tikzpicture}[>=implies,thick]


\node (UGAS) at (4,5.5) {0-UGAS};
\node (UGATT) at (0,5.5) {0-UGATT};
\node (ULIM_ULS) at (-3.5,5.5) {0-ULIM\,$\wedge$\,0-ULS};
\node (LF) at (6.5,5.5) {$\exists$ coercive LF};
\node (ncLF) at (9.7,5.5) {$\exists$ non-coercive LF};

\node (UAS_GATT) at (4,4.5) {0-GATT\,$\wedge$\,0-UAS};
\node (GAS_GS) at  (0,4.5) {0-GATT\,$\wedge$\,0-UGS};
\node (LIM_GS) at (-3.5,4.5) {0-LIM\,$\wedge$\,0-UGS};

\node (GAS) at (0,3.5) {0-GAS};
\node (LIM_LS) at (-3.5,3.5) {0-LIM\,$\wedge$\,0-ULS};
\node (GATTLS) at (4,3.5) {0-GATT\,$\wedge$\,0-ULS};


\draw[->,degil,double equal sign distance] (1.7,4.25) to (2.3,4.25);
\draw[<-,degil,double equal sign distance] (1.7,4.75) to (2.3,4.75);


\node (S-1) at (4.55,5) {{\scriptsize(1)(2)}};
\draw[->,double equal sign distance] (UGAS) to (UAS_GATT);
\node (S1) at (4.45,4) {\scriptsize(1)};
\draw[->,double equal sign distance] (UAS_GATT) to (GATTLS);

\node (S0) at (-0.45,5) {\scriptsize(1)};
\draw[->,double equal sign distance] (UGATT) to (GAS_GS);

\node (S1.2) at (-0.55,4) {{\scriptsize(1)(2)}};
\draw[->,double equal sign distance] (GAS_GS) to (GAS);


\draw[<->,double equal sign distance](ULIM_ULS) to (UGATT);
\draw[<->,double equal sign distance] (UGAS) to (UGATT);

\draw[<->,double equal sign distance] (LF) to (ncLF);
\draw[<->,double equal sign distance]  (UGAS) to (LF);

\draw[<->,double equal sign distance] (GAS_GS) to (LIM_GS);

\draw[<->,double equal sign distance] (GAS) to (GATTLS);
\draw[<->,double equal sign distance] (GAS) to (LIM_LS);

\end{tikzpicture}

\caption{Characterizations of 0-UGAS for systems, satisfying BRS and REP properties. Implications marked by (1) resp. (2)
  become equivalences for
  {\scriptsize (1)} ODE systems, see e.g. \cite[Proposition 2.5]{LSW96} and
{\scriptsize (2)} linear systems (as a consequence of the Banach-Steinhaus theorem).}
\label{UGAS_Equiv}
\end{figure*}

\begin{Lemm}
\label{0LIM_0LS_0GAS}
$\Sigma$ is 0-LIM and 0-ULS if and only if $\Sigma$ is 0-GAS.
\end{Lemm}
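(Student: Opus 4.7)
The plan is to prove the two implications separately, with the reverse direction being essentially immediate and the forward direction being the real content (though still short).

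For the easy direction $(\Leftarrow)$, I would simply unpack definitions. If $\Sigma$ is 0-GAS, then by Definition~\ref{Stab_Notions_Undisturbed_Systems}(vi) it is already 0-ULS, and it is 0-GATT, so $\lim_{t\to\infty}\|\phi(t,x,0)\|_X = 0$ for every $x\in X$. Since $\|\phi(t,x,0)\|_X \geq 0$, the limit being zero forces $\inf_{t\geq 0}\|\phi(t,x,0)\|_X = 0$, i.e. 0-LIM holds.

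For the forward direction $(\Rightarrow)$, assume 0-LIM and 0-ULS. Since 0-ULS is already half of 0-GAS, it remains to show 0-GATT. Let $\sigma\in\Kinf$ and $r>0$ be the data from 0-ULS, so that $\|\phi(t,y,0)\|_X \leq \sigma(\|y\|_X)$ whenever $\|y\|_X \leq r$ and $t\geq 0$. Fix $x \in X$ and $\varepsilon > 0$; without loss of generality $\varepsilon < \sigma(r)$, and set $\delta := \min\{\sigma^{-1}(\varepsilon),\,r\}>0$. By 0-LIM applied to $x$, there exists $t_0 \geq 0$ with $\|\phi(t_0,x,0)\|_X \leq \delta \leq r$. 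The cocycle property ($\Sigma$\ref{axiom:Cocycle}) then yields, for every $s \geq 0$,
\begin{equation*}
\|\phi(t_0+s,x,0)\|_X = \|\phi(s,\phi(t_0,x,0),0)\|_X \leq \sigma(\|\phi(t_0,x,0)\|_X) \leq \sigma(\delta) \leq \varepsilon.
\end{equation*}
Hence $\|\phi(t,x,0)\|_X \leq \varepsilon$ for all $t \geq t_0$, which proves $\lim_{t\to\infty}\|\phi(t,x,0)\|_X = 0$, i.e. 0-GATT. Combined with 0-ULS, this gives 0-GAS.

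I do not anticipate any real obstacle here, since the argument is essentially the standard ``use 0-LIM to enter a small ball, then use local stability plus the semigroup/cocycle property to stay there'' reasoning familiar from the finite-dimensional setting. The only subtle point worth checking is that the local stability gain $\sigma$ coming from 0-ULS is a $\Kinf$ function (hence invertible on its range), which is explicit in Definition~\ref{Stab_Notions_Undisturbed_Systems}(i), and that the cocycle property applies to the shifted input $0(\cdot+t_0)=0$, which is immediate.
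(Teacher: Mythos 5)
Your proof is correct and follows essentially the same route as the paper's: the reverse direction is definition-unpacking, and the forward direction uses 0-LIM to drive the trajectory into a small ball and then 0-ULS together with the cocycle property to keep it within $\varepsilon$ thereafter. The only cosmetic difference is that you work directly with the $\sigma\in\Kinf$ bound (inverting $\sigma$), whereas the paper uses the equivalent $\varepsilon$-$\delta$ restatement of local stability; both are fine.
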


\begin{proof}
It is clear that 0-GAS implies 0-LIM and 0-ULS. So we only prove the
converse direction.

Pick any $\eps_1>0$. Since $\Sigma$ is 0-ULS, there is a
$\delta_1=\delta_1(\eps_1)>0$ so that  $\|x\|_X \leq \delta_1$ implies
$\|\phi(t,x,0)\|_X \leq \eps_1$ for all $t \geq 0$. 

Pick any $x \in X$. Since $\Sigma$ is 0-LIM, there exists a
$T_1=T_1(x)>0$  such that  $\|\phi(T_1,x,0)\|_X \leq \delta_1$.
By the semigroup property, $\phi(t+T_1,x,0)=\phi(t,\phi(T_1,x,0),0)$ and consequently $\|\phi(t+T_1,x,0)\|_X \leq \eps_1$ for all $t \geq 0$.

Pick a sequence $\{\eps_i\}_{i=1}^{\infty}$ with $\eps_i \to 0$ as $i \to \infty$. According to the above argument, there exists a sequence of times  $T_i=T_i(x)$ such that $\|\phi(t,x,0)\|_X \leq \eps_i$ for all $t \geq T_i$, and thus $\|\phi(t,x,0)\|_X \to 0$ as $t \to \infty$. This shows that $\Sigma$ is 0-GATT, and since we assumed that $\Sigma$ is 0-ULS,  
$\Sigma$ is also 0-GAS.
\end{proof}

Now we can state the main result of this section
\begin{proposition}
\label{Main_Prop_Undisturbed_Systems}
For the system $\Sigma$ without inputs the relations depicted in Figure~\ref{UGAS_Equiv} hold.
\end{proposition}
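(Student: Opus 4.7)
The plan is to group the relations in Figure~\ref{UGAS_Equiv} into four clusters and dispatch each by either a direct specialization of results already proved in the paper, a trivial inspection of definitions, or citations of known Lyapunov characterizations. Throughout, we view the disturbance-free system as a system with a singleton input space, so that all properties involving $u$ collapse: UGS becomes 0-UGS, UAG becomes 0-UGATT, ULIM becomes 0-ULIM, LIM becomes 0-LIM, UGB becomes boundedness of reachability sets for the undisturbed flow, etc. Moreover, CEP reduces to continuity of $t\mapsto\phi(t,\cdot,0)$ at $0$, which is implied by 0-ULS, and BRS is implied by 0-UGS. This lets us invoke Theorem~\ref{thm:UAG_equals_ULIM_plus_LS} without further hypotheses once we have 0-ULS in hand.

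\textbf{Cluster 1 (top row equivalences).} The chain 0-UGAS $\Iff$ 0-UGATT $\Iff$ 0-ULIM\,$\wedge$\,0-ULS follows by applying Theorem~\ref{thm:UAG_equals_ULIM_plus_LS} to the trivial input: (i)$\Iff$(ii) there yields 0-UGAS $\Iff$ 0-UGATT, and (ii)$\Iff$(iii) yields 0-UGATT $\Iff$ 0-ULIM\,$\wedge$\,0-ULS (BRS being automatic from 0-ULS plus ULIM via Proposition~\ref{prop:ULIM_plus_mildRFC_implies_pUGS} applied without inputs). For the Lyapunov equivalences, 0-UGAS $\Iff$ $\exists$ non-coercive Lyapunov function is exactly Proposition~\ref{prop:Non-coerciveLF_Theorem}, and the existence of a coercive Lyapunov function is classical (it is implied by 0-UGAS and trivially implies existence of a non-coercive one).

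\textbf{Cluster 2 (middle row).} The implications 0-UGAS $\Rightarrow$ 0-GATT\,$\wedge$\,0-UAS $\Rightarrow$ 0-GATT\,$\wedge$\,0-ULS and 0-UGATT $\Rightarrow$ 0-GATT\,$\wedge$\,0-UGS are immediate: each arrow drops a uniformity requirement. The equivalence 0-GATT\,$\wedge$\,0-UGS $\Iff$ 0-LIM\,$\wedge$\,0-UGS uses that 0-GATT obviously implies 0-LIM, while the converse follows from Remark~\ref{rem:LIMAG} (LIM together with UGS yields AG, hence 0-LIM with 0-UGS gives 0-GATT). The \emph{non}-implication between the two middle-row branches (the crossed arrows between 0-GATT\,$\wedge$\,0-UAS and 0-GATT\,$\wedge$\,0-UGS) is witnessed by the counterexamples of Section~\ref{sec:Counterexamples}: Example~\ref{ex:FC_0GAS_BRS_not_GS} provides a 0-UAS, 0-GATT, BRS system that is not 0-UGS, and the standard finite-dimensional examples of attractive-but-not-stable equilibria separate the other direction.

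\textbf{Cluster 3 (bottom row).} The equivalences 0-GAS $\Iff$ 0-GATT\,$\wedge$\,0-ULS $\Iff$ 0-LIM\,$\wedge$\,0-ULS are direct: the first is the definition of 0-GAS, and the second is exactly Lemma~\ref{0LIM_0LS_0GAS} (which we have just proved). The vertical arrows from the middle row into the bottom row are again specializations dropping one degree of uniformity, and 0-GATT\,$\wedge$\,0-UGS $\Rightarrow$ 0-GAS because 0-UGS trivially implies 0-ULS.

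\textbf{Cluster 4 (labels (1) and (2)).} The arrows marked (1) become equivalences for ODE systems as a direct consequence of \cite[Proposition~2.5]{LSW96} combined with local compactness of $\R^n$; the marks (2) become equivalences for linear $C_0$-semigroup systems by the Banach--Steinhaus theorem, as recalled in Remark~\ref{0-GAS_strong_stability}. Collecting all four clusters yields Figure~\ref{UGAS_Equiv}. The only step that requires genuine work is Cluster~1, and there the heavy lifting has already been done in Theorem~\ref{thm:UAG_equals_ULIM_plus_LS} and Proposition~\ref{prop:Non-coerciveLF_Theorem}; the main obstacle is simply checking that the abstract hypotheses (BRS, CEP) of those theorems are automatic in the undisturbed setting under the mild regularity assumed here.
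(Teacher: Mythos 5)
Your overall decomposition matches the paper's: the uniform level is handled by specializing Theorem~\ref{thm:UAG_equals_ULIM_plus_LS} to the trivial input together with Proposition~\ref{prop:Non-coerciveLF_Theorem}, the lower levels follow from the definitions, Remark~\ref{rem:LIMAG} and Lemma~\ref{0LIM_0LS_0GAS}, and the crossed arrows are dispatched by counterexamples. However, one of your counterexamples is genuinely wrong. For the non-implication 0-GATT\,$\wedge$\,0-UGS $\not\Rightarrow$ 0-GATT\,$\wedge$\,0-UAS you invoke ``standard finite-dimensional examples of attractive-but-not-stable equilibria.'' Such an equilibrium is not even 0-ULS, hence certainly not 0-UGS, so it fails the hypothesis of the implication you are trying to falsify; worse, no finite-dimensional witness can exist, because label~(1) in Figure~\ref{UGAS_Equiv} (equivalently, Proposition~\ref{Characterizations_ODEs}) asserts that this implication becomes an equivalence for ODE systems. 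The correct witness is necessarily infinite-dimensional: a linear system whose $C_0$-semigroup is strongly stable but not exponentially stable. By Remark~\ref{0-GAS_strong_stability} such a system is 0-GAS and, via Banach--Steinhaus, 0-UGS, hence 0-GATT\,$\wedge$\,0-UGS; but it cannot be 0-UAS, since for linear systems 0-UAS is equivalent to exponential stability of the semigroup. This is exactly the argument the paper uses.

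A second, smaller defect: you justify the BRS hypothesis in the equivalence 0-UGATT $\Leftrightarrow$ 0-ULIM\,$\wedge$\,0-ULS by claiming BRS is ``automatic from 0-ULS plus ULIM via Proposition~\ref{prop:ULIM_plus_mildRFC_implies_pUGS},'' but that proposition runs in the opposite direction --- it \emph{assumes} BRS (together with ULIM) and concludes UGB. No derivation is needed: the caption of Figure~\ref{UGAS_Equiv} imposes BRS and the robust-equilibrium (CEP) property as standing assumptions on the class of systems considered, which is precisely what lets the paper apply Theorem~\ref{thm:UAG_equals_ULIM_plus_LS} directly. On the positive side, citing Example~\ref{ex:FC_0GAS_BRS_not_GS} rather than Example~\ref{0-GAS_but_not_GS} for the other crossed arrow is the better choice under that standing BRS assumption; the remainder of your argument is sound and follows the same route as the paper.
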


\begin{proof}
The equivalences on the uniform level follow directly from the equivalence between UAG and ISS, as well as from 
Proposition~\ref{prop:Non-coerciveLF_Theorem}. 
By definition, 0-GAS is equivalent to 0-GATT $\wedge$ 0-ULS, and it is equivalent to 0-LIM $\wedge$ 0-ULS according to Lemma~\ref{0LIM_0LS_0GAS}.

The implications (2) follow since 0-UAS $\Leftrightarrow$ 0-UGAS and 0-ULS $\Leftrightarrow$ 0-UGS for linear systems.
Finally (1) is well-known. 

The observation that 0-UAS $\wedge$ 0-GATT is not implied by and does not imply 0-GAS $\wedge$ 0-UGS follows from Example~\ref{0-GAS_but_not_GS} and since the strong stability of strongly continuous semigroups is weaker than exponential stability.
\end{proof}

\section{Conclusion and relation to previous results}

In this paper we have studied characterizations of ISS properties for a class of infinite-dimensional systems over Banach spaces.

We proved that ISS of infinite-dimensional systems is equivalent to the
uniform asymptotic gain property and to the combination of local stability
with the uniform limit property, introduced here. These results form a proper generalization of well-known characterizations of ISS for systems of ordinary differential equations, proved by Sontag and Wang in \cite{SoW96}.
In contrast to this, we show by means of several counterexamples, that
other characterizations of ISS, known to hold for ODE systems \cite{SoW96}, are no longer valid for infinite-dimensional systems. In particular, combinations of asymptotic or limit properties with uniform global stability are much weaker than ISS.

We introduce the new notion of strong ISS (sISS), which is equivalent to ISS
for nonlinear ODE systems and is equivalent to the strong stability of
$C_0$-semigroups for linear dynamical systems with inputs. In order to
characterize strong ISS, we introduce the notion of strong asymptotic
gain and the strong limit property and prove that the combination of any of these properties with uniform global stability is equivalent to sISS. 

By means of counterexamples, we show the relations between ISS, sISS and
other stability properties, and show that the properties, which were
equivalent to ISS for ODE systems are distinct in the infinite-dimensional world.

In a separate section, we specialize the results of this paper to systems without external inputs and relate these results to the recent characterization of uniform global asymptotic stability by means of non-coercive Lyapunov functions, proved in \cite{MiW17a}.

Finally, using our ISS criteria, we have proved for a broad class of evolution equations in Banach spaces that the existence of a non-coercive ISS Lyapunov function implies ISS.

A number of questions related to characterizations of strong ISS remain open. In particular it is not known, whether any of following implications hold for nonlinear infinite-dimensional systems: LIM $\Rightarrow$ sLIM, AG $\Rightarrow$ sAG, AG\,$\wedge$\,UGS $\Rightarrow$ sAG\,$\wedge$\,UGS. The answer to these questions will expand considerably our understanding of ISS theory of infinite-dimensional systems.

\bibliographystyle{IEEEtran}
%
\bibliography{Mir_LitList-TAC17}

\begin{IEEEbiography}[{\includegraphics[width=1in,height=1.25in,clip,keepaspectratio]{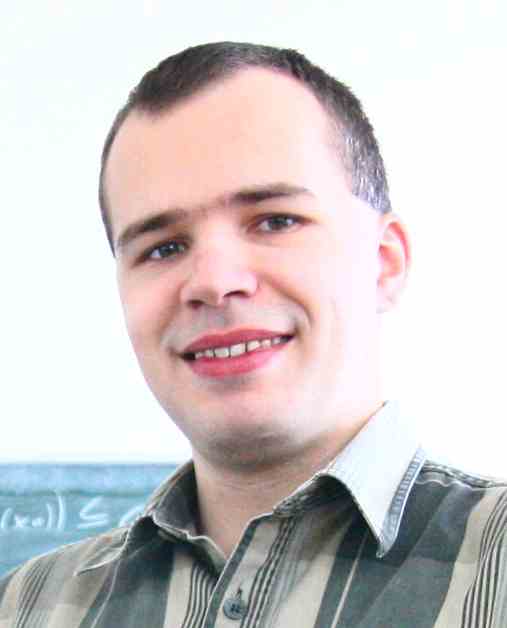}}]{Andrii Mironchenko}
received his MSc at the I.I. Mechnikov Odessa National University in 2008 and his PhD at the University of Bremen in 2012. 
He has held a research position at the University of W\"urzburg and was a Postdoctoral JSPS fellow at the Kyushu Institute of Technology (2013--2014). 
In 2014 he joined the Faculty of Mathematics and Computer Science at the University of Passau. 
His research interests include infinite-dimensional systems, stability theory, hybrid systems and applications of control theory to biological systems. 
\end{IEEEbiography}

\begin{IEEEbiography}[{\includegraphics[width=1in,height=1.25in,clip,keepaspectratio]{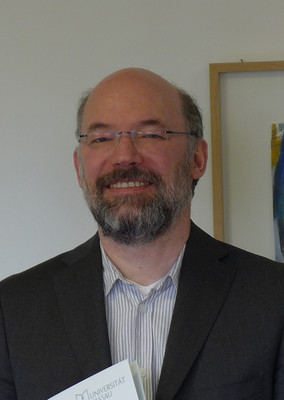}}]{Fabian Wirth}
received his PhD from the Institute of Dynamical Systems at
the University of Bremen in 1995. He has since held positions at the Centre Automatique et Syst{\`e}mes of Ecole des Mines, the
Hamilton Institute at NUI Maynooth, Ireland, the University of W\"urzburg and IBM Research Ireland. 
He now holds the chair for Dynamical Systems at the University
of Passau. His current interests include stability
theory, switched systems and large scale networks with applications to networked systems and in the domain of smart cities.
\end{IEEEbiography}

\end{document}